\renewcommand\@biblabel[1]{#1.} 
\newcommand{\norm}[1]{\left\Vert#1\right\Vert}
\newcommand{\set}[1]{\left\{#1\right\}}
\newcommand{\exset}[2]{S_{e}\left(#1,#2\right)}
\newcommand{\dist}[2]{\textnormal{dist}\left(#1,#2\right)}
\newcommand{\inpr}[2]{\left\langle#1,#2\right\rangle}
\newcommand{\graf}[1]{\textnormal{Gr}\left(#1\right)}
\newcommand{\R}{\mathbb{R}}
\newtheorem{algorithm}{Algorithm}
\newtheorem*{algorithmn}{Algorithm}
\newtheorem{theorem}{Theorem}[section]
\newtheorem{lemma}{Lemma}[section]
\newtheorem{proposition}{Proposition}[section]
\newtheorem{definition}{Definition}[section]
\newtheorem{corollary}{Corollary}[section]
\title{On the Complexity of the Projective Splitting and Spingarn's Methods for the
Sum of Two Maximal Monotone Operators}
\author{Majela Pent\'on Machado
\thanks{IMPA, Estr.\,Dona Castorina 110, 22460-320 Rio de Janeiro, Brazil,
       \href{mailto:majela@impa.br}{\tt majela@impa.br}.}
}
\date{}
\begin{document}

\maketitle

\abstract{In this work, we study the pointwise and ergodic iteration-complexity of a family of projective splitting methods proposed by Eckstein and Svaiter, for finding a zero of the sum of two maximal monotone operators. As a consequence of the complexity analysis of the projective splitting methods, we obtain complexity bounds for the two-operator case of Spingarn's partial inverse method. We also present inexact variants of two specific instances of this family of algorithms and derive corresponding convergence rate results.
}

\bigskip
\noindent {\small {\bf Keywords.}
splitting algorithms; maximal monotone operators; complexity; Spingarn method.}
\medskip

\noindent {\small {\bf AMS Classification:} 47H05, 49M27, 90C60, 65K05.}

\section{Introduction}
\label{sec:introduction}
A wide variety of problems, such as optimization and min-max problems, 
complementarity problems and variational inequalities, can be posed as the \emph{monotone inclusion problem} (MIP) associated to a maximal monotone point-to-set operator.
An important tool for the design and analysis of several implementable methods for solving MIPs is the
\emph{proximal point algorithm} (PPA), proposed by Martinet \cite{martinet_1970} and generalized by Rockafellar 
\cite{rock_ppa}. 
Even though the PPA has good global and local convergence properties \cite{rock_ppa},
its major drawback is that it requires the evaluation of the \emph{resolvent mappings} (or \emph{proximal mappings}) associated with the operator. The difficulty lies in the fact that evaluating a resolvent mapping, which is equivalent to solving a \emph{proximal
subproblem}, can be as complicated as finding a root of the operator.
 
One alternative to surmount this difficulty is to decompose the operator as the sum of two maximal monotone operators such that their resolvents are considerably easier to evaluate. Then, one can devise methods that use independently these proximal mappings.

In this work, we are concerned with MIPs defined by the sum of two maximal monotone
operators. 
We are also interested in the case 
where the problems of finding zeros of these operators separately are easier than solving the MIP for the sum. 
A typical instance of this situation is the \emph{variational inequality problem} associated with a maximal monotone operator $A$ and a closed convex subset $C$, whose solutions are precisely the zeros of the sum of $A$ and the normal operator associated with $C$, known to be maximal monotone.

\emph{Splitting methods} (or \emph{decomposition methods}) for problems of the above-mentioned type attempt to converge to
a solution of the MIP by solving, in each iteration, subproblems involving one of the operators, but not both.
\emph{Peaceman-Rachford} and \emph{Douglas-Rachford} methods are examples of this type of algorithms. These 
were first introduced in \cite{peaceman_rachford} and \cite{douglas_rachford} for the particular case of 
linear mappings, and then generalized in \cite{lions_mercier} by Lions and Mercier to address MIPs.
\emph{Forward-backward} methods \cite{lions_mercier,passty_1979_FB,tseng_2000_modified}, 
which generalize standard gradient projection methods for variational inequalities and optimization 
problems, are also examples of splitting algorithms.

Recently, a new family of splitting methods for solving MIPs given by the sum of two maximal monotone operators was introduced in \cite{eck_sv_08} by Eckstein and Svaiter. Through a generalized solution set in a product space, whose projection onto the first space is indeed the solution set of the problem, the authors constructed a class of decomposition methods with quite solid convergence properties.
These algorithms are
essentially projection methods, in the sense that in each
iteration a hyperplane is constructed separating the current iterate from the generalized solution
set, and then the next
iterate is taken as a relaxed projection of the current one onto this separating hyperplane. In order
to construct such hyperplanes, two proximal subproblems are solved,
each of which involves only one of the two maximal monotone operators,
which ensures the splitting nature of the methods.

In this work we study the iteration-complexity of the family of methods proposed
in \cite{eck_sv_08}, to be referred as \emph{projective splitting methods} (PSM) in the sequel. 
We start our analysis by introducing a projective algorithm that generalizes the PSM.
We then consider a termination criterion for this general algorithm in terms of the 
$\epsilon$-enlargements of the operators, which allows us to obtain convergence rates for the 
PSM measured by the pointwise and ergodic iteration-complexities.

Using the complexity analysis developed for the PSM, we also study the complexity 
of Spingarn's splitting method 
for solving inclusion problems given by the sum of two maximal monotone operators. 
In \cite{spingarn}, Spingarn introduced a splitting method for finding a zero of the sum of $m$ maximal monotone 
operators using the concept of \emph{partial inverses}. For the two-operator case, Eckstein and Svaiter proved 
in \cite{eck_sv_08} that Spingarn's method is a special case of a scaled variant of the PSM. This will allow us to establish iteration-complexity results for Spingarn's method for the case of the sum of two maximal monotone operators.

The general projective method that we introduce in this work is also used to construct inexact variants of two special cases of the family of PSM. For two specific instances of the PSM, we consider a relative error condition for 
approximately evaluating the resolvents. The error criterion considered in this work is different from the one 
used in \cite{eck_sv_09}, where was generalized the projective splitting framework for MIPs given by the sum of $m$ maximal 
monotone operators. Indeed, we will use the notion of approximate solutions of a proximal subproblem presented in 
\cite{sol_sv_unif}, which yields a more flexible error tolerance criterion and allows evaluation of the $\epsilon$-enlargement. 
We also derive convergence rate results for these two novel algorithms. 

The remainder of this paper is organized as follows. Section \ref{preliminares} reviews the definitions and some basic properties of a point-to-set maximal monotone operator and its $\epsilon$-enlargements. Section \ref{sec:general framework} presents a relaxed
projection method that extends the framework introduced in \cite{eck_sv_08}. It also proves some properties regarding this general scheme and establishes the stopping criterion that will be considered for such method and its instances.
Section \ref{complexity} presents the PSM introduced in \cite{eck_sv_08} and derives global convergence rate results for these methods. Subsection \ref{specialized} specializes these general complexity bounds for the case where global convergence for the family of PSM was obtained in \cite{eck_sv_08}. Section \ref{sec:spingarn} studies the  iteration complexity of the two-operator case of Spingarn’s method of partial inverses \cite{spingarn}. Finally, sections \ref{inexact} and \ref{sequential} propose inexact versions of two special cases of the PSM and establish iteration-complexity results for them.

\section{Preliminaries}
\label{preliminares}

Throughout this paper, we let $\R^n$ denote an $n$-dimensional space with inner product and induced norm denoted by $\inpr{\cdot}{\cdot}$ and $\norm{\cdot}$, respectively. We also define the spaces $\R_{+}$ and $\mathbb{E}$ as $\R_{+}:=\set{x\in\R\,:\,x\geq0}$ and $\mathbb{E}:=\R^n\times\R^n\times\R_{+}$.

In what follows in this section, we will review some material related to a point-to-set maximal monotone operator and its $\epsilon$-enlargements that will be needed along this work. 

A point-to-set operator $T:\mathbb{R}^{n}\rightrightarrows\mathbb{R}^{n}$ is a relation $T \subseteq\R^n\times\R^n$ and 
\begin{equation*}
T(z) := \{v\in\mathbb{R}^{n}:(z,v)\in T\}\qquad z\in \mathbb{R}^n.
\end{equation*}
Given $T:\mathbb{R}^{n}\rightrightarrows\mathbb{R}^{n}$ its graph is the set
\begin{equation*}
\graf{T}:=\{(z,v)\in\R^n\times\R^n:v\in T(z)\}.
\end{equation*}
An operator $T:\mathbb{R}^{n}\rightrightarrows\mathbb{R}^{n}$ is \emph{monotone}, if 
\begin{equation*}
\inpr{z-z'}{v-v'}\geq0\qquad\qquad\forall (z,v),(z',v')\in\graf{T},
\end{equation*}
and it is \emph{maximal monotone} if it is monotone and maximal in the family of monotone operators of $\mathbb{R}^{n}$ into $\mathbb{R}^{n}$, with respect to the partial order of inclusion. This is, if $S:\mathbb{R}^{n}\rightrightarrows\mathbb{R}^{n}$ is a monotone operator such that $\graf{T}\subseteq\graf{S}$, then $S=T$.

The \emph{resolvent mapping} of a maximal monotone operator $T$ with parameter $\lambda>0$ is $(I+\lambda T)^{-1}$, where $I$ is the identity mapping. It follows directly from the definition that $z'=(I+\lambda T)^{-1}(z)$, if and only if $z'$ is the solution of the \emph{proximal subproblem} 
\begin{equation}
\label{eq:prox-sub}
0\in\lambda T(z') + (z'-z).
\end{equation}

The $\epsilon$-\emph{enlargement} of a maximal monotone operator was introduced in \cite{bur_ius_sv_97} by Burachik, Iusem and Svaiter. In \cite{mon_sv_hpe}, Monteiro and Svaiter extended this notion to a generic point-to-set operator as follows. Given $T:\mathbb{R}^{n}\rightrightarrows\mathbb{R}^{n}$ and $\epsilon\in\mathbb{R}$, define the operator $\epsilon$-enlargement of $T$, $T^{\epsilon}:\mathbb{R}^{n}\rightrightarrows\mathbb{R}^{n}$, by
\begin{equation*}
T^{\epsilon}(z) := \{v\in\mathbb{R}^{n}:\inpr{z'-z}{v'-v}\geq-\epsilon,\quad\forall (z',v')\,\in\,\graf{T}\},\qquad\forall z\in\mathbb{R}^{n}.
\end{equation*}
The following proposition presents some important properties of $T^{\epsilon}$. Its proof can be found in \cite{mon_sv_hpe}.
\begin{proposition}
\label{Prop enlargement}
Let $T:\mathbb{R}^{n}\rightrightarrows\mathbb{R}^{n}$. Then,
\begin{itemize}
\item[(a)] if $\epsilon'\leq\epsilon$, we have $T^{\epsilon'}(z)\subseteq T^{\epsilon}(z)$ for all $z\in\mathbb{R}^{n}$;
\item[(b)] $T$ is monotone if and only if $T\subseteq T^{0}$;
\item[(c)] $T$ is maximal monotone if and only if $T=T^{0}$.
\end{itemize}
\end{proposition}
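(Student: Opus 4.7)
The plan is to prove all three parts by direct unpacking of definitions; no deep machinery is required, so the proof will be essentially bookkeeping with the inequality that defines $T^{\epsilon}$.

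For part (a), I would take an arbitrary $v\in T^{\epsilon'}(z)$ and simply note that the defining inequality $\inpr{z'-z}{v'-v}\geq -\epsilon'$ combined with $-\epsilon'\geq -\epsilon$ immediately yields $\inpr{z'-z}{v'-v}\geq -\epsilon$ for every $(z',v')\in\graf{T}$, hence $v\in T^{\epsilon}(z)$. This is a one-line argument.

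For part (b), I would split into the two implications. For the forward direction, assume $T$ is monotone and pick any $(z,v)\in\graf{T}$; monotonicity applied to this pair and any $(z',v')\in\graf{T}$ yields $\inpr{z'-z}{v'-v}\geq 0$, which is exactly the condition $v\in T^{0}(z)$, proving $T\subseteq T^{0}$. For the converse, the inclusion $T\subseteq T^{0}$ says precisely that every pair $(z,v)\in\graf{T}$ satisfies $\inpr{z'-z}{v'-v}\geq 0$ against every $(z',v')\in\graf{T}$, which is monotonicity of $T$.

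For part (c), I would again handle both implications, using (b) to get the monotonicity half for free. For the forward direction, assume $T$ is maximal monotone; by (b), $T\subseteq T^{0}$, so it remains to show $T^{0}\subseteq T$. If $v\in T^{0}(z)$, then appending $(z,v)$ to $\graf{T}$ preserves monotonicity (by the defining inequality of $T^{0}$ together with the monotonicity of $T$ itself), and maximality forces $(z,v)\in\graf{T}$. For the converse, $T=T^{0}$ is monotone by (b); given any monotone $S$ with $\graf{T}\subseteq\graf{S}$ and any $(z,v)\in\graf{S}$, the monotonicity of $S$ tested against all $(z',v')\in\graf{T}$ yields $v\in T^{0}(z)=T(z)$, so $S\subseteq T$ and thus $S=T$.

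There is no real obstacle; the only point that requires care is in the forward direction of (c), where one must invoke maximality correctly after verifying that adjoining a pair from $T^{0}$ does not destroy monotonicity. Everything else is a direct translation of the defining inequality of the $\epsilon$-enlargement.
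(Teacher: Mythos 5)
Your proof is correct. The paper does not prove this proposition itself but defers to the cited reference of Monteiro and Svaiter, and your argument is precisely the standard definitional one given there: parts (a) and (b) are immediate from the definition of $T^{\epsilon}$, and in part (c) you correctly identify and handle the one delicate step, namely verifying that adjoining a pair $(z,v)$ with $v\in T^{0}(z)$ to $\graf{T}$ preserves monotonicity before invoking maximality.
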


Observe that items (a) and (c) above imply that, if $T:\R^n\rightrightarrows\R^n$ is maximal monotone, then $T(z)\subseteq T^\epsilon(z) $ for all $z\in\R^n$ and $\epsilon\geq0$.
Hence, $T^\epsilon(z)$ is indeed an enlargement of $T(z)$.

We now state the \emph{weak transportation formula} \cite{bur_sag_sv_99} for computing points in the graph of $T^\epsilon$. This formula will be used in the complexity analysis of some ergodic iterates generated by the algorithms studied in this work (see subsection \ref{ergodic iterates}).
\begin{theorem}
\label{Teo Transp For}
Assume that $T:\mathbb{R}^{n}\rightrightarrows\mathbb{R}^{n}$ is a maximal monotone operator. Let $z_{i},v_{i}\in\mathbb{R}^{n}$ and $\epsilon_{i},\alpha_{i}\in\mathbb{R}_{+}$, for $i=1,\ldots,k$, be such that
\begin{align*}
v_{i}\in T^{\epsilon_{i}}(z_{i}),\quad i=1,\ldots,k, \qquad\qquad\sum_{i=1}^{k}\alpha_{i}=1,
\end{align*}
and define 
\begin{equation*}
\overline{z}: = \sum_{i=1}^{k}\alpha_{i}z_{i}, \qquad\quad \overline{v}:=\sum_{i=1}^{k}\alpha_{i}v_{i}, \qquad \quad\overline{\epsilon}:=\sum_{i=1}^{k}\alpha_{i}(\epsilon_{i}+\inpr{z_{i}-\overline{z}}{v_{i}}).
\end{equation*}
Then, $\overline{\epsilon}\geq0$ and $\overline{v}\in T^{\overline{\epsilon}}(\overline{z})$.
\end{theorem}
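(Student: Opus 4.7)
The plan is to prove the two assertions in reverse order: first establish the defining inequality for $\overline{v}\in T^{\overline{\epsilon}}(\overline{z})$ (which makes sense even without sign information, since the paper's definition of $T^{\epsilon}$ allows $\epsilon\in\R$), and then use this inequality against a carefully chosen point of $\graf{T}$ to conclude $\overline{\epsilon}\geq 0$.

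For the membership, fix an arbitrary $(z',v')\in\graf{T}$. Using $\sum_{i}\alpha_{i}=1$ to write $z'-\overline{z}=\sum_{i}\alpha_{i}(z'-z_{i})+\sum_{i}\alpha_{i}(z_{i}-\overline{z})$ and $v'-\overline{v}=\sum_{i}\alpha_{i}(v'-v_{i})$, I would split
\[
\inpr{z'-\overline{z}}{v'-\overline{v}}=\sum_{i=1}^{k}\alpha_{i}\inpr{z'-z_{i}}{v'-v_{i}}+\sum_{i=1}^{k}\alpha_{i}\inpr{z_{i}-\overline{z}}{v'-v_{i}}.
\]
The first sum is bounded below by $-\sum_{i}\alpha_{i}\epsilon_{i}$, because $v_{i}\in T^{\epsilon_{i}}(z_{i})$ and $(z',v')\in\graf{T}$. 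For the second sum, observe that $\sum_{i}\alpha_{i}(z_{i}-\overline{z})=0$ by the definition of $\overline{z}$, which annihilates the $v'$ contribution and leaves exactly $-\sum_{i}\alpha_{i}\inpr{z_{i}-\overline{z}}{v_{i}}$. Combining the two bounds gives $\inpr{z'-\overline{z}}{v'-\overline{v}}\geq -\overline{\epsilon}$, which is precisely the defining inequality for $\overline{v}\in T^{\overline{\epsilon}}(\overline{z})$.

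To conclude $\overline{\epsilon}\geq 0$, I would invoke maximal monotonicity through Minty's surjectivity theorem: since $T$ is maximal monotone, $\range(I+T)=\R^{n}$, so there exists $\tilde{z}\in\R^{n}$ such that $\overline{z}+\overline{v}-\tilde{z}\in T(\tilde{z})$. Applying the inequality just proved to the pair $(z',v')=(\tilde{z},\,\overline{z}+\overline{v}-\tilde{z})\in\graf{T}$ yields
\[
-\norm{\tilde{z}-\overline{z}}^{2}=\inpr{\tilde{z}-\overline{z}}{\overline{z}-\tilde{z}}=\inpr{z'-\overline{z}}{v'-\overline{v}}\geq -\overline{\epsilon},
\]
so $\overline{\epsilon}\geq\norm{\tilde{z}-\overline{z}}^{2}\geq 0$, completing the proof.

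The only genuinely nonobvious step is the nonnegativity of $\overline{\epsilon}$: a priori the cross terms $\inpr{z_{i}-\overline{z}}{v_{i}}$ in its definition could be of either sign, so no purely algebraic manipulation from $\epsilon_{i}\geq 0$ suffices. This is where the maximality hypothesis is essential, and producing the witness pair via $\range(I+T)=\R^{n}$ is the hinge of the argument.
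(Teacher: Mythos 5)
Your proof is correct. Note that the paper does not actually prove Theorem \ref{Teo Transp For}; it only cites \cite{bur_sag_sv_99}, so there is no in-paper argument to compare against, but your two steps are exactly the standard ones: the splitting of $\inpr{z'-\overline{z}}{v'-\overline{v}}$ is verified by a routine expansion (the cross term in $v'$ vanishes because $\sum_i\alpha_i(z_i-\overline{z})=0$), and the first sum is correctly bounded using only the defining inequality of $T^{\epsilon_i}$ against the fixed graph point $(z',v')$. For the nonnegativity of $\overline{\epsilon}$ your appeal to Minty's surjectivity theorem works and is self-contained, but you could finish more economically with tools already in the paper: if $\overline{\epsilon}<0$, then Proposition \ref{Prop enlargement}(a) and (c) give $\overline{v}\in T^{\overline{\epsilon}}(\overline{z})\subseteq T^{0}(\overline{z})=T(\overline{z})$, so $(\overline{z},\overline{v})\in\graf{T}$, and plugging $(z',v')=(\overline{z},\overline{v})$ into the inequality you just proved yields $0\geq-\overline{\epsilon}>0$, a contradiction. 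Both routes use maximality in an essential way, as you correctly observe; yours additionally produces the quantitative lower bound $\overline{\epsilon}\geq\norm{\tilde{z}-\overline{z}}^{2}$, which the shorter argument does not.
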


\section{The General Projective Splitting Framework}
\label{sec:general framework}
The monotone inclusion problem (MIP) of interest in this work consists of finding
$z\in\mathbb{R}^{n}$ such that
\begin{equation}
\label{problem}
0 \in A(z) + B(z),
\end{equation}
where $A,B:\mathbb{R}^{n}\rightrightarrows\mathbb{R}^{n}$ are maximal
monotone operators. 

The framework presented in \cite{eck_sv_08} reformulates problem \eqref{problem} in  terms  of  a  convex  
feasibility problem, which  is  defined  by  a certain closed convex \emph{extended solution set}.
To solve the feasibility problem, the authors introduced successive projection 
algorithms that use, at each iteration, independent calculations involving each operator. Our goals in this section are to present a scheme that generalizes the methods in \cite{eck_sv_08}, and to study its properties. This general framework will allow us to derive convergence rates for the family of PSM and Spingarn's method. In addition, using this general method, we construct inexact versions of two specific instances of the PSM and study their complexities.

Consider $S_e(A,B)\subset\R^n\times\R^n$ the \emph{extended solution set} of (\ref{problem}) defined in
\cite{eck_sv_08} as
\begin{equation*}
S_e(A,B):=\set{(z,w)\in\R^n\times\R^n\,:\,w\in B(z),\, -w\in A(z)}.
\end{equation*}
The following result establishes two important properties of $\exset{A}{B}$. Its proof can be found in \cite[Lemma 1]{eck_sv_08}.

\begin{lemma}
\label{lem:ext-set}
  If $A,B:\mathbb{R}^{n}\rightrightarrows\mathbb{R}^{n}$ are maximal
  monotone operators, then the following statements hold.
\begin{itemize}
\item[(a)] A point $z\in\mathbb{R}^{n}$ is a solution of \eqref{problem},
  if and only if there is $w\in\mathbb{R}^{n}$ such that
  $(z,w)\in\exset{A}{B}$.
\item[(b)] $S_e(A,B)$ is a closed and convex subset of $\R^n\times\R^n$.
\end{itemize}
\end{lemma}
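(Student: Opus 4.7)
Part (a) is essentially a tautology once the definitions are unwound. By definition of the set sum, $0 \in A(z)+B(z)$ if and only if there exists $w\in\R^n$ such that $w\in B(z)$ and $-w\in A(z)$, and this is precisely the statement $(z,w)\in \exset{A}{B}$. I would simply spell this out in two short lines.

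Part (b) splits naturally into closedness and convexity. Closedness is easy: if $(z_k,w_k)\to(z,w)$ with each $(z_k,w_k)\in\exset{A}{B}$, then $w_k\in B(z_k)$ and $-w_k\in A(z_k)$. Since maximal monotone operators have closed graphs (which follows from item (c) of Proposition \ref{Prop enlargement}, as $T^0$ is an intersection of closed half-spaces), passing to the limit yields $w\in B(z)$ and $-w\in A(z)$, so $(z,w)\in \exset{A}{B}$.

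The main (and only nontrivial) step is convexity. Given $(z_1,w_1),(z_2,w_2)\in\exset{A}{B}$ and $t\in[0,1]$, set $\bar z=tz_1+(1-t)z_2$ and $\bar w=tw_1+(1-t)w_2$. Applying the transportation formula (Theorem \ref{Teo Transp For}) to $B$ with $\epsilon_1=\epsilon_2=0$, $\alpha_1=t$, $\alpha_2=1-t$ gives $\bar w\in B^{\bar\epsilon_B}(\bar z)$ where, using $z_1-\bar z=(1-t)(z_1-z_2)$ and $z_2-\bar z=-t(z_1-z_2)$, a short computation shows
\begin{equation*}
\bar\epsilon_B \;=\; t(1-t)\inpr{z_1-z_2}{w_1-w_2}.
\end{equation*}
The same formula applied to $A$ with the pairs $(z_i,-w_i)$ yields $-\bar w\in A^{\bar\epsilon_A}(\bar z)$ with $\bar\epsilon_A=-t(1-t)\inpr{z_1-z_2}{w_1-w_2}=-\bar\epsilon_B$. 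Since Theorem \ref{Teo Transp For} guarantees both $\bar\epsilon_A,\bar\epsilon_B\geq 0$, we must have $\bar\epsilon_A=\bar\epsilon_B=0$. Then Proposition \ref{Prop enlargement}(c) gives $\bar w\in B^0(\bar z)=B(\bar z)$ and $-\bar w\in A^0(\bar z)=A(\bar z)$, i.e.\ $(\bar z,\bar w)\in\exset{A}{B}$.

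The expected obstacle is exactly this convexity step: a direct attempt to combine the monotonicity inequalities for $(z_1,w_1)$ and $(z_2,w_2)$ against an arbitrary $(z',v')\in\graf{B}$ fails because the convex combination of $\inpr{z_i-z'}{w_i-v'}$ is not $\inpr{\bar z-z'}{\bar w-v'}$. The transportation formula circumvents this by quantifying the defect through $\bar\epsilon$, and the symmetry between $A$ and $B$ (one operator pays the cost the other refunds) forces this defect to vanish.
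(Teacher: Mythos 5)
Your proof is correct. Note that the paper itself does not prove this lemma at all --- it delegates to \cite{eck_sv_08} (``Its proof can be found in [Lemma 1]'') --- so there is no in-paper argument to compare against; what follows compares your route to the one in that reference. There, convexity of $\exset{A}{B}$ is obtained directly: monotonicity of $B$ gives $\inpr{z_1-z_2}{w_1-w_2}\geq0$, monotonicity of $A$ applied to $-w_i\in A(z_i)$ gives the reverse inequality, hence $\inpr{z_1-z_2}{w_1-w_2}=0$; one then expands $\inpr{\bar z-x}{\bar w-b}$ against an arbitrary $(x,b)\in\graf{B}$, checks that the cross terms reduce to this vanishing quantity, and invokes maximality. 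Your argument via the weak transportation formula (Theorem \ref{Teo Transp For}) is a tidy repackaging of exactly that computation: the defect $\bar\epsilon_B=t(1-t)\inpr{z_1-z_2}{w_1-w_2}$ is the cross term, the cancellation $\bar\epsilon_A=-\bar\epsilon_B$ with both nonnegative is the two-sided monotonicity argument, and both proofs close by using maximality in the form $T^0=T$ (Proposition \ref{Prop enlargement}(c)). What your version buys is that no expansion of inner products is needed and the mechanism (``one operator pays the cost the other refunds'') is made explicit; what it costs is reliance on Theorem \ref{Teo Transp For}, which the direct proof avoids. One small imprecision in your closedness argument: the sets $\set{(z,v):\inpr{z'-z}{v'-v}\geq0}$ whose intersection is $\graf{T^0}$ are not half-spaces, since the defining function is quadratic in $(z,v)$ rather than affine; they are nevertheless closed as preimages of $[0,\infty)$ under continuous functions, so the conclusion stands.
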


According to the above lemma, problem \eqref{problem} is equivalent to the 
convex feasibility problem of finding a point in $S_e(A,B)$.
In order to solve this feasibility problem by successive orthogonal projection methods, we need to construct hyperplanes separating points $(z,w)\notin S_e(A,B)$ from $S_e(A,B)$.
For this purpose, in \cite{eck_sv_08} it was used points in the graph of $A$ and $B$ to define affine functions, which were called \emph{decomposable separators}, such that
$\exset{A}{B}$ was contained in the non-positive half-spaces determined by
them. Here, we generalize this concept using points in
the $\epsilon$-enlargements of $A$ and $B$.

\begin{definition}
  \label{def_dec_sep_gen}
\emph{  Given two triplets $(x,b,\epsilon^{x})$,
  $(y,a,\epsilon^{y})\in\mathbb{E}$ such that $b\in B^{\epsilon^{x}}(x)$
  and $a\in A^{\epsilon^{y}}(y)$, the\,\, \emph{decomposable\,\, separator}\,\, associated\,\, with\,\, $(x,b,\epsilon^{x})$\,\, and\,\, $(y,a,\epsilon^{y})$\,\, is\,\, the\,\, affine\,\, function $\phi:\R^n\times\R^n\rightarrow\mathbb{R}$
  \begin{equation*}
    \phi(z,w) := \inpr{z-x}{b-w} + \inpr{z-y}{a+w} -\epsilon^{x} - \epsilon^{y}.
  \end{equation*}
  The \emph{non-positive level set} of $\phi$ is
  \begin{align*}
      H_\phi:=\set{(z,w)\in\R^n\times\R^n\;:\;\phi(z,w) \leq 0}.
  \end{align*}}
\end{definition}

\begin{lemma}
\label{lema phi propt}
  If $\phi$ is the decomposable separator associated with $(x,b,\epsilon^{x})$ and
  $(y,a,\epsilon^{y})\in\mathbb{E}$, where $b\in B^{\epsilon^{x}}(x)$
  and $a\in A^{\epsilon^{y}}(y)$, and $H_\phi$ is its non-positive level set, then
  \begin{itemize}
  \item[(a)] $ S_e(A,B)\subseteq H_\phi$;
  \item[(b)] either $\nabla \phi\neq 0$ or $\phi\leq 0$ in $\mathbb{R}^n\times
    \mathbb{R}^n$;
  \item[(c)] either $H_\phi$ is a closed half-space or $H_\phi=\R^n\times\R^n$.
  \end{itemize}
\end{lemma}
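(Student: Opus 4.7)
The plan is to treat the three items as routine consequences of two facts: the defining inequality of the $\epsilon$-enlargement and the fact that $\phi$ is manifestly affine in $(z,w)$.

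For (a), I would fix $(z,w)\in S_e(A,B)$, so that $w\in B(z)$ and $-w\in A(z)$, and then feed the pair $(z,w)$ into the defining inequality for $b\in B^{\epsilon^x}(x)$. This gives $\inpr{z-x}{w-b}\ge -\epsilon^{x}$, i.e.\ $\inpr{z-x}{b-w}\le\epsilon^{x}$. The symmetric move, applied to the pair $(z,-w)\in\graf{A}$ with $a\in A^{\epsilon^y}(y)$, yields $\inpr{z-y}{a+w}\le\epsilon^{y}$. Adding the two inequalities produces $\phi(z,w)\le 0$, i.e.\ $(z,w)\in H_\phi$.

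For (b) and (c), the key algebraic observation is that when the two inner products in the definition of $\phi$ are expanded, the cross terms $\inpr{z}{w}$ and $-\inpr{z}{w}$ cancel, so $\phi$ is affine on $\R^n\times\R^n$ with gradient of the form $(a+b,\,x-y)$. Thus $\nabla\phi=0$ forces $b=-a$ and $x=y$; in that case $\phi$ collapses to the constant $-\inpr{x}{b}-\inpr{y}{a}-\epsilon^{x}-\epsilon^{y}=-\epsilon^{x}-\epsilon^{y}$, which is nonpositive because $\epsilon^{x},\epsilon^{y}\in\R_+$ by virtue of the triplets living in $\mathbb{E}$. This proves (b), and (c) then follows at once: if $\nabla\phi\neq 0$, the sublevel set of a nonconstant continuous affine functional on a finite-dimensional space is a closed half-space, while if $\nabla\phi=0$, then (b) gives $\phi\le 0$ everywhere and so $H_\phi=\R^n\times\R^n$.

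There is no real obstacle in this proof; the only point that requires a bit of care is remembering that the nonnegativity of $\epsilon^x$ and $\epsilon^y$ (packaged inside the definition of $\mathbb{E}$) is precisely what rules out the degenerate scenario in which $\phi$ is a positive constant, and so is exactly what makes the dichotomy in (c) clean.
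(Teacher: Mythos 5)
Your proof is correct and follows essentially the same route as the paper: item (a) by plugging $(z,w)\in S_e(A,B)$ into the defining inequalities of $B^{\epsilon^x}(x)$ and $A^{\epsilon^y}(y)$ and adding, and items (b), (c) by rewriting $\phi$ as an affine function with gradient $(a+b,x-y)$ and constant term bounded above by $-\epsilon^x-\epsilon^y\leq 0$ in the degenerate case. The paper states these steps more tersely (``follow immediately''), but the underlying computation is identical to yours.
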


\begin{proof}
Item (a) is a direct consequence of the definitions of the $\epsilon$-enlargement of a point-to-set
operator and the set $S_e(A,B)$. Rewriting $\phi(z,w)$  as
\begin{equation}
\label{rewriting_phi}
\phi(z,w) = \inpr{z-y}{a+b} + \inpr{w-b}{x-y} -\epsilon^x -\epsilon^y\qquad \forall(z,w)\in\R^n\times\R^n,
\end{equation}
and noting that $\nabla\phi=(a+b,x-y)$ and $\epsilon^x$, $\epsilon^y\geq0$, then (b) and (c) follow immediately.
\end{proof}

We now present the general projection scheme for finding a point in $S_e(A,B)$ that will be studied in this work. 
Algorithm \ref{alg_genr_proj} below generalizes the framework introduced in \cite{eck_sv_08}, since we use the notion of decomposable separator introduced in Definition \ref{def_dec_sep_gen}. 

\begin{algorithm}
\label{alg_genr_proj}
Choose $(z_{0},w_{0})\in\R^n\times\R^n$. For $k=1,2,\ldots$
\begin{itemize}
\item[1.] Choose $(x_{k},b_{k},\epsilon^x_{k})$ and
  $(y_{k},a_{k},\epsilon^y_{k})\in\mathbb{E}$ such that
\begin{align*}
  b_{k}\in B^{\epsilon^x_{k}}(x_{k})\hspace{5mm}\text{and} \hspace{5mm}a_{k}\in A^{\epsilon^y_{k}}(y_{k}).
\end{align*}
\item[2.] Define $\phi_{k}:\R^n\times\R^n\rightarrow\mathbb{R}$ as the
  decomposable separator associated with $(x_{k},b_{k},\epsilon^x_{k})$
  and $(y_{k},a_{k},\epsilon^y_{k})$, and compute $P_{H_{\phi_k}}(z_{k-1},w_{k-1})$, the orthogonal projection of $(z_{k-1},w_{k-1})$ onto the set $H_{\phi_{k}}$, i.e. define
  \begin{equation*}
 \begin{split}
  & \gamma_k:=
   \begin{cases}
     0,&\text{if }\phi_k(z_{k-1},w_{k-1})\leq 0,\\
     \dfrac{\phi_k(z_{k-1},w_{k-1})}{\norm{\nabla\phi_k}^2},&\text{otherwise};
   \end{cases}\\
   \end{split}
   \end{equation*}
   and set
   \begin{equation*}
   P_{H_\phi}(z_{k-1},w_{k-1})=(z_{k-1},w_{k-1})-\gamma_k\nabla\phi_{k}.
 \end{equation*}
\item[3.] Choose $\rho_{k}\in\left]0,2\right[$ and set
\begin{equation*}
\begin{split}
(z_{k},w_{k}) & = (z_{k-1},w_{k-1}) + \rho_{k}\left[P_{H_{\phi_k}}(z_{k-1},w_{k-1})-(z_{k-1},w_{k-1})\right]\\
& = (z_{k-1},w_{k-1}) - \rho_{k}\gamma_{k}\nabla\phi_{k}.
\end{split}
\end{equation*}
\end{itemize}
\end{algorithm}

Note that the general form of Algorithm \ref{alg_genr_proj} is not sufficient to guarantee convergence of the sequence $\{(z_{k},w_{k})\}$ to a point in $\exset{A}{B}$. 
For example, if the separation between the point $(z_{k-1},w_{k-1})\notin S_e(A,B)$ and $S_e(A,B)$ by $\phi_{k}$ is not strict, then the next iterate is in fact $(z_{k-1},w_{k-1})$ itself, which
might lead to a constant sequence. 
Hence, to ensure convergence it is necessary to impose
additional conditions on the decomposable separators, see \cite{eck_sv_08} and sections \ref{complexity}, \ref{inexact} and \ref{sequential} below. However, since Algorithm \ref{alg_genr_proj} is a relaxed projection type method, it is possible to establish Fejér 
monotone convergence to $S_e(A,B)$ and boundedness of its generated sequence, as well as other classical properties of this kind of algorithms (see for example \cite{eck_sv_08}, \cite{bau_borw_96}).

\subsection{The Generated Sequences}
\label{background}

We will now analyze some properties of the sequences $\{(z_{k},w_{k})\}$, $\{\phi_k\}$, $\{\gamma_k\}$ and $\{\rho_k\}$ generated by Algorithm \ref{alg_genr_proj}, which will be needed in our complexity study.
To this end, let us first prove the following technical result.

\begin{lemma}
\label{Prop Phi}
For any $(z,w)\in\R^n\times\R^n$ and $k\geq1$ we have
\begin{equation}
\label{sum identity Phi}
\dfrac12\norm{(z,w)-(z_k,w_k)}^2 +\, \dfrac12\sum_{j=1}^k\rho_j(2-\rho_j)\gamma_j^2\norm{\nabla\phi_j}^2 = \dfrac12\norm{(z,w)-(z_0,w_0)}^2 +\, \sum_{j=1}^k\rho_j\gamma_j\phi_j(z,w).
\end{equation}
\end{lemma}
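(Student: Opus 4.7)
The plan is a standard one–step–plus–induction argument for relaxed projection methods, followed by telescoping. I would first derive a per–iteration identity of the form
\[
\tfrac12\norm{(z,w)-(z_k,w_k)}^2 + \tfrac12\rho_k(2-\rho_k)\gamma_k^2\norm{\nabla\phi_k}^2
= \tfrac12\norm{(z,w)-(z_{k-1},w_{k-1})}^2 + \rho_k\gamma_k\phi_k(z,w),
\]
and then sum from $j=1$ to $j=k$; the middle terms telescope and produce exactly \eqref{sum identity Phi}. So the real content is the one–step identity.

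To obtain it, I would use the update rule from step 3 of Algorithm~\ref{alg_genr_proj}, namely $(z_k,w_k)=(z_{k-1},w_{k-1})-\rho_k\gamma_k\nabla\phi_k$, to expand
\[
\norm{(z,w)-(z_k,w_k)}^2 = \norm{(z,w)-(z_{k-1},w_{k-1})}^2 + 2\rho_k\gamma_k\inpr{(z,w)-(z_{k-1},w_{k-1})}{\nabla\phi_k} + \rho_k^2\gamma_k^2\norm{\nabla\phi_k}^2.
\]
Since $\phi_k$ is affine with gradient $\nabla\phi_k$, the inner product equals $\phi_k(z,w)-\phi_k(z_{k-1},w_{k-1})$. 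The key algebraic reduction is then to observe that the definition of $\gamma_k$ in step 2 gives
\[
\gamma_k\,\phi_k(z_{k-1},w_{k-1}) = \gamma_k^2\norm{\nabla\phi_k}^2,
\]
in \emph{both} cases of the formula for $\gamma_k$: when $\phi_k(z_{k-1},w_{k-1})\le 0$ this reduces to $0=0$, and when $\phi_k(z_{k-1},w_{k-1})>0$ it is just the defining relation $\gamma_k\norm{\nabla\phi_k}^2=\phi_k(z_{k-1},w_{k-1})$ multiplied by $\gamma_k$. Substituting this and collecting the $\gamma_k^2\norm{\nabla\phi_k}^2$ terms (the factor becomes $-2\rho_k+\rho_k^2=-\rho_k(2-\rho_k)$) yields the one–step identity after dividing by $2$.

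With the per–step identity in hand, I would conclude by induction on $k$: the base case $k=0$ is trivial (both sides equal $\tfrac12\norm{(z,w)-(z_0,w_0)}^2$ since the sums are empty), and the inductive step is obtained by adding the one–step identity at index $k$ to the identity \eqref{sum identity Phi} at index $k-1$, so that the $\tfrac12\norm{(z,w)-(z_{k-1},w_{k-1})}^2$ terms cancel. I do not anticipate any real obstacle; the only place that requires care is the case analysis needed to justify $\gamma_k\phi_k(z_{k-1},w_{k-1})=\gamma_k^2\norm{\nabla\phi_k}^2$ (and, implicitly, that $\gamma_k$ is well-defined whenever $\phi_k(z_{k-1},w_{k-1})>0$, which by Lemma~\ref{lema phi propt}(b) forces $\nabla\phi_k\neq 0$).
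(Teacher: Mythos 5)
Your proposal is correct and follows essentially the same route as the paper: expand the square using the update rule, reduce the cross term to $\phi_k(z,w)-\phi_k(z_{k-1},w_{k-1})$ (the paper does this by passing through the point $(y_k,b_k)$ and adding/subtracting $\rho_k\gamma_k(\epsilon^x_k+\epsilon^y_k)$, which is just your affine-gradient identity written out), invoke $\gamma_k\phi_k(z_{k-1},w_{k-1})=\gamma_k^2\norm{\nabla\phi_k}^2$ in both cases of the definition of $\gamma_k$, and sum. The only cosmetic difference is that you telescope by induction while the paper sums the one-step identity directly.
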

\begin{proof}
First we observe that for $j=1,2,\ldots,$ and any $(z,w)\in\R^n\times\R^n$ it holds that
\begin{equation}
\label{eq:eq1-prop-phi}
\begin{split}
\dfrac{1}{2} \norm{(z,w)-(z_j,w_j)}^2  = &\, \dfrac{1}{2}\norm{(z,w)-(z_{j-1},w_{j-1})+\rho_j\gamma_j\nabla\phi_j}^2\\
 = &\, \dfrac{1}{2}\norm{(z,w)-(z_{j-1},w_{j-1})}^2 + \inpr{(z,w)-(z_{j-1},w_{j-1})}{\rho_j\gamma_j\nabla\phi_j}\\
& \,\, + \dfrac{1}{2}\rho_j^2\gamma_j^2\norm{\nabla\phi_j}^2\\
= & \, \dfrac{1}{2}\norm{(z,w)-(z_{j-1},w_{j-1})}^2 + \rho_j\gamma_j\inpr{(z,w)-(y_j,b_j)}{\nabla\phi_j}\\
& \,\, + \rho_j\gamma_j\inpr{(y_j,b_j)-(z_{j-1},w_{j-1})}{\nabla\phi_j} + \dfrac{1}{2}\rho_j^2\gamma_j^2\norm{\nabla\phi_j}^2,
\end{split}
\end{equation}
where the first equality above follows from the update rule in step $3$ of Algorithm \ref{alg_genr_proj}.

Equation \eqref{rewriting_phi} with $\phi=\phi_j$ implies that
\begin{equation*}
\phi_j(z,w) = \inpr{(z,w)-(y_j,b_j)}{\nabla\phi_j} - \epsilon^x_j - \epsilon^y_j\quad\qquad \forall(z,w) \in \R^n\times\R^n.
\end{equation*}
Therefore, adding and subtracting $\rho_j\gamma_j(\epsilon^x_j + \epsilon^y_j)$ on the right-hand side of the last equality in \eqref{eq:eq1-prop-phi} and combining with the identity above, we obtain 
\begin{equation*}
\begin{split}
\dfrac{1}{2} \norm{(z,w)-(z_j,w_j)}^2 = & \,\dfrac{1}{2}\norm{(z,w)-(z_{j-1},w_{j-1})}^2 + \rho_j\gamma_j\phi_j(z,w)\\
& \,\,\,\, - \rho_j\gamma_j\phi_j(z_{j-1},w_{j-1}) + \dfrac{1}{2}\rho_j^2\gamma_j^2\norm{\nabla\phi_j}^2.
\end{split}
\end{equation*}

If we assume that $\gamma_j>0$, then the definition of $\gamma_j$ in step 2 of Algorithm \ref{alg_genr_proj} yields that $\phi_j(z_{j-1},w_{j-1})=\gamma_j\norm{\nabla\phi_j}^2$. Hence, substituting this expression into the equality above and rearranging, we have
\begin{equation*}
\dfrac{1}{2} \norm{(z,w)-(z_j,w_j)}^2 + \dfrac{1}{2}\rho_j(2-\rho_j)\gamma_j^2\norm{\nabla\phi_j}^2 =  \,\dfrac{1}{2}\norm{(z,w)-(z_{j-1},w_{j-1})}^2 + \rho_j\gamma_j\phi_j(z,w).
\end{equation*}
It is clear that this latter equality also holds if $\gamma_j=0$. Thus, adding equation above from $j=1$ to $k$ we obtain \eqref{sum identity Phi}.
\end{proof}

In what follows we assume that problem \eqref{problem} has at least one solution, which implies that $S_e(A,B)$ is a non-empty set in view of Lemma \ref{lem:ext-set}.

Next theorem, which follows directly from Lemma \ref{Prop Phi}, establishes boundedness of the sequence $\{(z_k,w_k)\}$ calculated by Algorithm \ref{alg_genr_proj}, and it also shows that the sum appearing on the left-hand 
side of \eqref{sum identity Phi} is bounded by the distance of the initial point to the set $S_e(A,B)$.

\begin{theorem}
\label{Prop general proj}
Take  $(z_{0},w_{0})\in\R^n\times\R^n$ and let 
$\{(z_{k},w_{k})\}$, $\{\phi_k\}$, $\{\gamma_k\}$ and $\{\rho_k\}$
be the sequences generated by Algorithm $\ref{alg_genr_proj}$. Then, for every integer $k\geq1$, we have
\begin{align}
\label{dist_est}
\sum_{j=1}^{k}\rho_{j}(2-\rho_{j})\gamma_{j}^{2}\norm{\nabla\phi_{j}}^{2} \leq d_{0}^{2}\qquad\text{and}\qquad\norm{(z_{k},w_{k})-(z_0,w_0)} \leq 2d_{0},
\end{align}
where $d_{0}$ is the distance of $(z_{0},w_{0})$ to $\exset{A}{B}$.
\end{theorem}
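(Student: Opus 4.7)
The plan is to apply the identity from Lemma \ref{Prop Phi} at an arbitrary point $(z,w) \in S_e(A,B)$ and exploit the fact that the separators $\phi_j$ are non-positive on the extended solution set. By Lemma \ref{lema phi propt}(a), for every $j$ we have $\phi_j(z,w) \leq 0$. Since $\rho_j > 0$ and $\gamma_j \geq 0$ by construction (step 2 of Algorithm \ref{alg_genr_proj}), the last sum on the right-hand side of \eqref{sum identity Phi} is non-positive.

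From this, I would drop the non-negative term $\tfrac{1}{2}\norm{(z,w)-(z_k,w_k)}^2$ on the left-hand side of the identity and use $\rho_j \in (0,2)$ to note that each summand $\rho_j(2-\rho_j)\gamma_j^2\norm{\nabla\phi_j}^2$ is non-negative. This yields
\[
\sum_{j=1}^{k}\rho_{j}(2-\rho_{j})\gamma_{j}^{2}\norm{\nabla\phi_{j}}^{2} \leq \norm{(z,w)-(z_{0},w_{0})}^{2}
\]
for every $(z,w)\in S_e(A,B)$. Taking the infimum over $S_e(A,B)$ (which is non-empty by Lemma \ref{lem:ext-set}(a) and the standing assumption that \eqref{problem} has a solution) gives the first bound in \eqref{dist_est}.

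For the second bound, the same argument applied to the other non-negative term in \eqref{sum identity Phi} yields Fej\'er monotonicity of $\{(z_k,w_k)\}$ with respect to $S_e(A,B)$, that is,
\[
\norm{(z,w)-(z_{k},w_{k})} \leq \norm{(z,w)-(z_{0},w_{0})} \qquad \forall (z,w)\in S_e(A,B).
\]
Then the triangle inequality
\[
\norm{(z_{k},w_{k})-(z_{0},w_{0})} \leq \norm{(z_{k},w_{k})-(z,w)} + \norm{(z,w)-(z_{0},w_{0})} \leq 2\norm{(z,w)-(z_{0},w_{0})}
\]
combined with taking the infimum over $S_e(A,B)$ produces $\norm{(z_k,w_k)-(z_0,w_0)} \leq 2d_0$.

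I do not anticipate a serious obstacle: the whole argument is a direct bookkeeping consequence of Lemma \ref{Prop Phi} together with the separation property in Lemma \ref{lema phi propt}(a). The one thing to be careful about is making sure the relaxation factor $\rho_j(2-\rho_j)$ is correctly identified as non-negative (ensured by $\rho_j\in\,]0,2[$), so that dropping it from one side of the identity is legitimate, and that the two bounds in \eqref{dist_est} share a common source — the identity specialized to points of the extended solution set.
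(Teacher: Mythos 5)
Your argument is correct and follows essentially the same route as the paper: specialize the identity of Lemma \ref{Prop Phi} at a point of $S_e(A,B)$, use Lemma \ref{lema phi propt}(a) to discard the $\phi_j$ terms, and finish with the triangle inequality. The only cosmetic difference is that the paper evaluates at the orthogonal projection of $(z_0,w_0)$ onto $S_e(A,B)$ (so that $\norm{(z,w)-(z_0,w_0)}=d_0$ directly), whereas you work with an arbitrary point and take the infimum at the end; both are equally valid.
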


\begin{proof}
Take $(z^\ast,w^\ast)$ the orthogonal projection of $(z_0,w_0)$ onto $S_e(A,B)$. From Lemma \ref{lema phi propt}(a) it follows that $\phi_j(z^\ast,w^\ast)\leq0$ for all integer $j\geq1$. 
Hence, specializing equality \eqref{sum identity Phi} with $(z^\ast,w^\ast)$ we obtain the first bound in \eqref{dist_est} and the following inequality
\begin{equation*}
\norm{(z_k,w_k)-(z^\ast,w^\ast)}\leq d_0.
\end{equation*}
Since $\norm{(z_0,w_0)-(z^\ast,w^\ast)}=d_0$, the second estimate in \eqref{dist_est} follows from the latter two relations and the triangle inequality for norms.
\end{proof}

It is important to say that Theorem \ref{Prop general proj} can be proven using standard arguments of relaxed projection algorithms, see for instance \cite{bau_borw_96}. 
We have chosen the above approach since it will be more convenient for our subsequent analysis.

\subsection{The Ergodic Sequences}
\label{ergodic iterates}

Besides the pointwise complexity of specific instances of Algorithm \ref{alg_genr_proj},
we are also interested in deriving their convergence rates measured by the iteration 
complexity in an ergodic sense. To do this, we consider sequences obtained by weighted 
averages of the sequences $\{x_{k}\}$ and $\{y_{k}\}$, generated by Algorithm \ref{alg_genr_proj}, and study their properties.

Let $\{x_{k}\}$, $\{y_{k}\}$, $\{\gamma_k\}$ and $\{\rho_k\}$ be the sequences computed with Algorithm \ref{alg_genr_proj}, for every integer $k\geq1$ assume that $\gamma_k>0$ and 
define $\overline{x}_{k}$ and $\overline{y}_{k}$ as                                                                              
\begin{equation}
\label{def_xy_erg}
\overline{x}_{k} := \frac{1}{\Gamma_{k}}\sum_{j=1}^{k}\rho_{j}\gamma_{j}x_{j},\qquad \overline{y}_{k} := \frac{1}{\Gamma_{k}}\sum_{j=1}^{k}\rho_{j}\gamma_{j}y_{j},\qquad\quad \textrm{where} \hspace{3mm} \Gamma_{k} := \sum_{j=1}^{k}\rho_{j}\gamma_{j}.
\end{equation}
The following lemma is a direct consequence of the weak transportation formula, Theorem \ref{Teo Transp For}.

\begin{lemma}
\label{Lem_residual_erg}
Let $\{(x_{k},b_{k},\epsilon^x_{k})\}$, $\{(y_{k},a_{k},\epsilon^y_{k})\}$, $\{\gamma_k\}$ and $\{\rho_k\}$ be the sequences generated by Algorithm $\ref{alg_genr_proj}$. 
For every integer $k\geq1$, suppose that $\gamma_k>0$ and consider $\overline{x}_{k}$, $\overline{y}_{k}$ and $\Gamma_k$ given as in \eqref{def_xy_erg}. Define also
\begin{align}
\label{def_res_erg_x}
\overline{b}_{k} := \frac{1}{\Gamma_{k}}\sum_{j=1}^{k}\rho_{j}\gamma_{j}b_{j}, & & \overline{\epsilon}^x_{k} := \frac{1}{\Gamma_{k}}\sum_{j=1}^{k}\rho_{j}\gamma_{j}(\epsilon^x_{j}+\inpr{x_{j}-\overline{x}_{k}}{b_{j}}),\\
\label{def_res_erg_y}
\overline{a}_{k} := \frac{1}{\Gamma_{k}}\sum_{j=1}^{k}\rho_{j}\gamma_{j}a_{j}, & & \overline{\epsilon}^y_{k} := \frac{1}{\Gamma_{k}}\sum_{j=1}^{k}\rho_{j}\gamma_{j}(\epsilon^y_{j}+\inpr{y_{j}-\overline{y}_{k}}{a_{j}}).
\end{align}
Then, we have
\begin{align*}
\overline{\epsilon}^x_{k}\geq0,\hspace{17mm} & \overline{b}_{k}\in B^{\overline{\epsilon}^x_{k}}(\overline{x}_{k}),\\
\overline{\epsilon}^y_{k}\geq0,\hspace{17mm} & \overline{a}_{k}\in A^{\overline{\epsilon}^y_{k}}(\overline{y}_{k}).
\end{align*}
\end{lemma}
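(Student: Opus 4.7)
The plan is to apply the weak transportation formula (Theorem \ref{Teo Transp For}) twice, once to the operator $B$ with the triples $(x_j,b_j,\epsilon^x_j)$, and once to the operator $A$ with the triples $(y_j,a_j,\epsilon^y_j)$. In both applications, the convex combination coefficients are chosen as
\begin{equation*}
\alpha_j := \frac{\rho_j\gamma_j}{\Gamma_k},\qquad j=1,\ldots,k.
\end{equation*}

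Before invoking Theorem \ref{Teo Transp For}, I would verify that its hypotheses are met. By step 1 of Algorithm \ref{alg_genr_proj}, for each $j$ we have $(x_j,b_j,\epsilon^x_j),(y_j,a_j,\epsilon^y_j)\in\mathbb{E}$ with $b_j\in B^{\epsilon^x_j}(x_j)$ and $a_j\in A^{\epsilon^y_j}(y_j)$; in particular, $\epsilon^x_j,\epsilon^y_j\geq 0$. Since $\rho_j\in\,]0,2[$ and $\gamma_j>0$ by assumption, the coefficients $\alpha_j$ are strictly positive, and the very definition of $\Gamma_k$ in \eqref{def_xy_erg} gives $\sum_{j=1}^k\alpha_j=1$. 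The operators $A$ and $B$ are maximal monotone by hypothesis, so Theorem \ref{Teo Transp For} applies to each of them.

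Applying the transportation formula to $B$ with these weights, the quantities denoted $\overline z$, $\overline v$ and $\overline\epsilon$ in Theorem \ref{Teo Transp For} become, after multiplying numerator and denominator by $\Gamma_k$, precisely $\overline{x}_k$, $\overline{b}_k$ and $\overline{\epsilon}^x_k$ as defined in \eqref{def_xy_erg} and \eqref{def_res_erg_x}. The conclusion of Theorem \ref{Teo Transp For} then gives $\overline{\epsilon}^x_k\geq 0$ and $\overline{b}_k\in B^{\overline{\epsilon}^x_k}(\overline{x}_k)$. The argument for $A$ with the triples $(y_j,a_j,\epsilon^y_j)$ is completely analogous and yields $\overline{\epsilon}^y_k\geq 0$ and $\overline{a}_k\in A^{\overline{\epsilon}^y_k}(\overline{y}_k)$.

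There is no real obstacle here: the entire proof is a bookkeeping exercise matching the definitions in \eqref{def_xy_erg}--\eqref{def_res_erg_y} with the generic notation of Theorem \ref{Teo Transp For} under the weights $\alpha_j=\rho_j\gamma_j/\Gamma_k$. The only point requiring minor care is the identification of the inner-product correction term $\langle x_j-\overline{x}_k,b_j\rangle$ (respectively $\langle y_j-\overline{y}_k,a_j\rangle$) with the corresponding summand in the formula for $\overline{\epsilon}$, which follows immediately once $\overline{x}_k$ (respectively $\overline{y}_k$) is recognized as the weighted average prescribed by the transportation formula.
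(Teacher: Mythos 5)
Your proof is correct and is essentially the paper's own argument: the paper likewise disposes of this lemma by invoking Theorem \ref{Teo Transp For} together with the inclusions $b_{j}\in B^{\epsilon^x_{j}}(x_{j})$ and $a_{j}\in A^{\epsilon^y_{j}}(y_{j})$, with the weights $\rho_j\gamma_j/\Gamma_k$. Your version merely spells out the verification of the hypotheses in more detail.
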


\begin{proof}
The lemma follows from Theorem \ref{Teo Transp For} and the inclusions $b_{j}\in B^{\epsilon^x_{j}}(x_{j})$ and $a_{j}\in A^{\epsilon^y_{j}}(y_{j})$.
\end{proof}

We will refer to the sequences $\{(\overline{x}_k,\overline{b}_k,\overline{\epsilon}^x_{k})\}$ and $\{(\overline{y}_k,\overline{a}_k,\overline{\epsilon}^y_{k})\}$, defined in \eqref{def_xy_erg}-\eqref{def_res_erg_y}, as the \emph{ergodic sequences} associated with Algorithm \ref{alg_genr_proj}.

Next lemma presents alternative expressions for $\overline{a}_k+\overline{b}_k$, $\overline{x}_k-\overline{y}_k$ and
$\overline{\epsilon}^x_{k}+\overline{\epsilon}^y_{k}$, which will be used for obtaining bounds on their sizes.
\begin{lemma}
\label{Prop rewriten residuals}
Let $\{(x_{k},b_{k},\epsilon^x_{k})\}$, $\{(y_{k},a_{k},\epsilon^y_{k})\}$, $\{\gamma_k\}$ and $\{\rho_k\}$ be the sequences generated by Algorithm $\ref{alg_genr_proj}$. Assume that $\gamma_k>0$ for all $k\geq1$, and define the sequences $\{\overline{x}_{k}\}$, $\{\overline{y}_{k}\}$, $\{\Gamma_k\}$, $\{\overline{b}_k\}$, $\{\overline{a}_k\}$, $\{\overline{\epsilon}^x_{k}\}$ and $\{\overline{\epsilon}^y_{k}\}$ as in \eqref{def_xy_erg}, \eqref{def_res_erg_x} and \eqref{def_res_erg_y}. Then, for every integer $k\geq1$, we have
\begin{equation}
\label{eq:rew-erg-ite}
\overline{a}_{k}+\overline{b}_{k} = \frac{1}{\Gamma_{k}}(z_{0}-z_{k}),
\qquad \overline{x}_{k}-\overline{y}_{k} = \frac{1}{\Gamma_{k}}(w_{0}-w_{k}),
\qquad \overline{\epsilon}^x_{k} + \overline{\epsilon}^y_{k} = -\frac{1}{\Gamma_k}\sum_{j=1}^k\rho_j\gamma_j\phi_j(\overline{y}_k,\overline{b}_k).
\end{equation}
\end{lemma}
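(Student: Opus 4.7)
The plan is to exploit the gradient structure $\nabla\phi_j=(a_j+b_j,\,x_j-y_j)$ for the first two identities, and to unwind the definition of $\overline{\epsilon}^x_k+\overline{\epsilon}^y_k$ against a direct computation of $\phi_j(\overline{y}_k,\overline{b}_k)$ for the third.

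For the first two equalities, I would start from the update rule in step 3 of Algorithm \ref{alg_genr_proj}, which reads $(z_j,w_j)-(z_{j-1},w_{j-1})=-\rho_j\gamma_j\nabla\phi_j=-\rho_j\gamma_j(a_j+b_j,\,x_j-y_j)$. Telescoping this equation from $j=1$ to $k$ yields
\begin{equation*}
z_0-z_k=\sum_{j=1}^{k}\rho_j\gamma_j(a_j+b_j),\qquad w_0-w_k=\sum_{j=1}^{k}\rho_j\gamma_j(x_j-y_j),
\end{equation*}
and dividing through by $\Gamma_k$ and invoking the definitions \eqref{def_xy_erg}, \eqref{def_res_erg_x}, \eqref{def_res_erg_y} of $\overline{a}_k,\overline{b}_k,\overline{x}_k,\overline{y}_k$ immediately delivers the two required identities.

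For the third identity, I would evaluate $\phi_j$ at the ergodic point $(\overline{y}_k,\overline{b}_k)$ using the original form in Definition \ref{def_dec_sep_gen}:
\begin{equation*}
-\phi_j(\overline{y}_k,\overline{b}_k)=\inpr{x_j-\overline{y}_k}{b_j-\overline{b}_k}+\inpr{y_j-\overline{y}_k}{a_j+\overline{b}_k}+\epsilon^{x}_j+\epsilon^{y}_j.
\end{equation*}
Expanding the inner products and then averaging with weights $\rho_j\gamma_j/\Gamma_k$, the mixed terms collapse gracefully: terms involving $\inpr{\overline{y}_k}{b_j}$ and $\inpr{y_j}{\overline{b}_k}$ both reduce to $\inpr{\overline{y}_k}{\overline{b}_k}$ and cancel, while $\frac{1}{\Gamma_k}\sum_j\rho_j\gamma_j\inpr{\overline{x}_k}{b_j}=\inpr{\overline{x}_k}{\overline{b}_k}$ and $\frac{1}{\Gamma_k}\sum_j\rho_j\gamma_j\inpr{\overline{y}_k}{a_j}=\inpr{\overline{y}_k}{\overline{a}_k}$. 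What remains is exactly
\begin{equation*}
-\frac{1}{\Gamma_k}\sum_{j=1}^{k}\rho_j\gamma_j\phi_j(\overline{y}_k,\overline{b}_k)=\frac{1}{\Gamma_k}\sum_{j=1}^{k}\rho_j\gamma_j\bigl(\epsilon^{x}_j+\epsilon^{y}_j+\inpr{x_j}{b_j}+\inpr{y_j}{a_j}\bigr)-\inpr{\overline{x}_k}{\overline{b}_k}-\inpr{\overline{y}_k}{\overline{a}_k}.
\end{equation*}
On the other side, I would expand $\overline{\epsilon}^x_k+\overline{\epsilon}^y_k$ from \eqref{def_res_erg_x}--\eqref{def_res_erg_y}, using $\frac{1}{\Gamma_k}\sum_j\rho_j\gamma_j\inpr{\overline{x}_k}{b_j}=\inpr{\overline{x}_k}{\overline{b}_k}$ and the analogous identity in $y,a$, to obtain the identical right-hand side.

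The only subtle point is recognising that the asymmetric choice of reference point $(\overline{y}_k,\overline{b}_k)$ — rather than $(\overline{x}_k,\overline{a}_k)$ or anything symmetric — is precisely what makes the cross terms $\inpr{\cdot}{b_j-\overline{b}_k}$ and $\inpr{\cdot}{a_j}$ separate cleanly into the two $\epsilon$-correction sums. Once one sees why this particular pairing is forced by the definitions in \eqref{def_res_erg_x}--\eqref{def_res_erg_y}, the computation is routine bookkeeping.
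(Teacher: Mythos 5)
Your proposal is correct and follows essentially the same route as the paper: telescoping the update rule $(z_j,w_j)=(z_{j-1},w_{j-1})-\rho_j\gamma_j\nabla\phi_j$ with $\nabla\phi_j=(a_j+b_j,x_j-y_j)$ for the first two identities, and expanding $\phi_j(\overline{y}_k,\overline{b}_k)$ against the definitions of $\overline{\epsilon}^x_k$ and $\overline{\epsilon}^y_k$ for the third. The only cosmetic difference is that you expand all bilinear terms on both sides and match them, whereas the paper keeps the groupings $\inpr{x_j}{\overline{b}_k-b_j}$ and $\inpr{\overline{y}_k-y_j}{a_j}$ intact; the cancellations you identify are exactly those used in the paper.
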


\begin{proof}
Direct use of the definitions of\, $\overline{x}_k$, $\overline{y}_k$, $\overline{b}_k$ and $\overline{a}_k$ yields
\begin{equation*}
(\overline{a}_{k}+\overline{b}_{k},\overline{x}_{k}-\overline{y}_{k}) = \frac{1}{\Gamma_{k}}\sum_{j=1}^{k}\rho_{j}\gamma_{j}(a_{j}+b_{j},x_{j}-y_{j}).
\end{equation*}
Since $\nabla\phi_j=(a_j+b_j,x_j-y_j)$ for all integer $j\geq1$, in view of the update rule in step 3 of Algorithm \ref{alg_genr_proj}, the definition of $\Gamma_k$ and the equation above, we have
\begin{equation*}
\begin{split}
(z_{k},w_{k}) = & \,(z_{0},w_{0}) - \sum_{j=1}^{k}\rho_{j}\gamma_{j}(a_{j}+b_{j},x_{j}-y_{j})\\
 = &\, (z_{0},w_{0}) - \Gamma_{k}(\overline{a}_{k}+\overline{b}_{k},\overline{x}_{k}-\overline{y}_{k}).
\end{split}
\end{equation*}
The relation above clearly implies the first two identities in \eqref{eq:rew-erg-ite}.

To prove the last equality in \eqref{eq:rew-erg-ite} we first note that
\begin{align*}
\sum_{j=1}^k\rho_j\gamma_j\phi_j(\overline{y}_k,\overline{b}_k)& = \sum_{j=1}^k \rho_j\gamma_j\left(\inpr{\overline{y}_k-x_j}{b_j-\overline{b}_k}+\inpr{\overline{y}_k-y_j}{a_j+\overline{b}_k} -\epsilon^x_{j} -\epsilon^y_{j}\right)\\
&= \sum_{j=1}^k \rho_j\gamma_j\left(\inpr{\overline{y}_k}{b_j} + \inpr{x_j}{\overline{b}_k-b_j} +\inpr{\overline{y}_k-y_j}{a_j} -\inpr{y_j}{\overline{b}_k} -\epsilon^x_{j} -\epsilon^y_{j}\right).
\end{align*}
Next, we multiply the equation above by $1/\Gamma_k$ and use the definitions of $\overline{y}_{k}$ and $\overline{b}_{k}$ to obtain
\begin{align}
\label{Phi(y,b)}
\frac{1}{\Gamma_k}\sum_{j=1}^k\rho_j\gamma_j\phi_j(\overline{y}_k,\overline{b}_k)& = \frac{1}{\Gamma_k}\sum_{j=1}^k \rho_j\gamma_j\left(\inpr{x_j}{\overline{b}_k-b_j}+\inpr{\overline{y}_k-y_j}{a_j} -\epsilon^x_{j} -\epsilon^y_{j}\right).
\end{align}
Now, we observe that $\overline{\epsilon}^x_{k}$ can be rewritten as
\begin{align*}
\overline{\epsilon}^x_{k} = \frac{1}{\Gamma_{k}}\sum_{j=1}^{k}\rho_{j}\gamma_{j}\left(\epsilon^x_{j} + \inpr{x_{j}}{b_{j}-\overline{b}_{k}}\right).
\end{align*}
Thus, adding $\overline{\epsilon}^x_{k}$ and $\overline{\epsilon}^y_{k}$ and combining with \eqref{Phi(y,b)} and the equation above, we deduce the third equality in \eqref{eq:rew-erg-ite}.
\end{proof}

The following result establishes bounds for the quantities $\overline{a}_k+\overline{b}_k$, $\overline{x}_k-\overline{y}_k$ and $\overline{\epsilon}^x_k+\overline{\epsilon}_k^y$.

\begin{theorem}
\label{Teo erg bound}
Assume the hypotheses of Lemma $\ref{Prop rewriten residuals}$ and let $d_{0}$ be the distance of $(z_{0},w_{0})$ to $\exset{A}{B}$. Then, for all integer $k\geq1$, we have
\begin{align}
\label{compl_erg_ax}
&\norm{\overline{a}_{k}+\overline{b}_{k}} \leq \frac{2d_{0}}{\Gamma_{k}},\qquad\qquad\norm{\overline{x}_{k}-\overline{y}_{k}} \leq  \frac{2d_{0}}{\Gamma_{k}},\\
\label{compl_erg_ep}
&\overline{\epsilon}^x_{k} + \overline{\epsilon}^y_{k} \leq \frac{1}{\Gamma_{k}}\left[\frac{1}{\Gamma_{k}}\sum_{j=1}^{k}\rho_{j}\gamma_{j}\norm{(y_{j},b_{j})-(z_{j-1},w_{j-1})}^{2}+4d_0^2\right].
\end{align}
\end{theorem}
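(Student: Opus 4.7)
The two inequalities in \eqref{compl_erg_ax} will follow almost immediately from the identities derived in Lemma \ref{Prop rewriten residuals} together with the bound on $\|(z_k,w_k)-(z_0,w_0)\|$ in Theorem \ref{Prop general proj}. More precisely, from $\overline{a}_k+\overline{b}_k=(z_0-z_k)/\Gamma_k$ one gets
\[
\|\overline{a}_k+\overline{b}_k\|\;=\;\frac{\|z_0-z_k\|}{\Gamma_k}\;\le\;\frac{\|(z_0,w_0)-(z_k,w_k)\|}{\Gamma_k}\;\le\;\frac{2d_0}{\Gamma_k},
\]
and the bound on $\|\overline{x}_k-\overline{y}_k\|$ is proved analogously from $\overline{x}_k-\overline{y}_k=(w_0-w_k)/\Gamma_k$.

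For the bound in \eqref{compl_erg_ep}, the plan is to combine the third identity of Lemma \ref{Prop rewriten residuals}, namely $\overline{\epsilon}^x_k+\overline{\epsilon}^y_k=-\Gamma_k^{-1}\sum_j\rho_j\gamma_j\phi_j(\overline{y}_k,\overline{b}_k)$, with the telescoping identity \eqref{sum identity Phi} of Lemma \ref{Prop Phi} specialized to the point $(z,w)=(\overline{y}_k,\overline{b}_k)$. Solving the latter for $\sum_j\rho_j\gamma_j\phi_j(\overline{y}_k,\overline{b}_k)$ and dropping the nonnegative terms $\tfrac12\|(\overline{y}_k,\overline{b}_k)-(z_k,w_k)\|^2$ and $\tfrac12\sum_j\rho_j(2-\rho_j)\gamma_j^2\|\nabla\phi_j\|^2$ (nonnegative because $\rho_j\in(0,2)$) yields
\[
-\sum_{j=1}^k\rho_j\gamma_j\,\phi_j(\overline{y}_k,\overline{b}_k)\;\le\;\tfrac{1}{2}\|(\overline{y}_k,\overline{b}_k)-(z_0,w_0)\|^2.
\]

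The remaining task is to bound the right-hand side by the bracketed expression in \eqref{compl_erg_ep}. Since $(\overline{y}_k,\overline{b}_k)$ is the convex combination of $\{(y_j,b_j)\}_{j=1}^k$ with weights $\rho_j\gamma_j/\Gamma_k$, convexity of $\|\cdot\|^2$ gives
\[
\tfrac12\|(\overline{y}_k,\overline{b}_k)-(z_0,w_0)\|^2\;\le\;\frac{1}{2\Gamma_k}\sum_{j=1}^k\rho_j\gamma_j\,\|(y_j,b_j)-(z_0,w_0)\|^2.
\]
Splitting $(y_j,b_j)-(z_0,w_0)$ as $[(y_j,b_j)-(z_{j-1},w_{j-1})]+[(z_{j-1},w_{j-1})-(z_0,w_0)]$, applying the elementary inequality $\|u+v\|^2\le 2\|u\|^2+2\|v\|^2$, and invoking the second bound of Theorem \ref{Prop general proj} to control $\|(z_{j-1},w_{j-1})-(z_0,w_0)\|^2\le 4d_0^2$, one obtains the desired inequality upon dividing by $\Gamma_k$.

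The only subtlety in the argument is choosing the right pivot point and the right splitting in the third step: using $(\overline{y}_k,\overline{b}_k)$ (rather than, say, a point in $S_e(A,B)$) is what turns Lemma \ref{Prop Phi} into a usable bound for $\overline{\epsilon}^x_k+\overline{\epsilon}^y_k$, and passing through $(z_{j-1},w_{j-1})$ rather than the projection onto $S_e(A,B)$ is what produces the specific constant $4d_0^2$ and coefficient $1$ in front of the sum.
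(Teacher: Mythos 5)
Your proof is correct and follows essentially the same route as the paper: the first two bounds come from the identities of Lemma \ref{Prop rewriten residuals} plus the second estimate in \eqref{dist_est}, and the $\overline{\epsilon}$-bound comes from evaluating \eqref{sum identity Phi} at $(\overline{y}_k,\overline{b}_k)$, dropping the nonnegative terms, and then controlling $\tfrac12\norm{(\overline{y}_k,\overline{b}_k)-(z_0,w_0)}^2$. The only (cosmetic) difference is that you apply Jensen's inequality first and then split each term through $(z_{j-1},w_{j-1})$, whereas the paper first passes through the average point $(\overline{z}_k,\overline{w}_k)$ and then applies convexity to each piece; both yield the identical final estimate.
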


\begin{proof}
We combine the first two identities in \eqref{eq:rew-erg-ite} with the second inequality in (\ref{dist_est}) to obtain
\begin{equation*}
\norm{(\overline{a}_{k}+\overline{b}_{k},\overline{x}_{k}-\overline{y}_{k})} = \frac{1}{\Gamma_{k}}\norm{(z_{0},w_{0})-(z_{k},w_{k})} \leq \frac{2d_{0}}{\Gamma_{k}}.
\end{equation*}
Thus, the bounds in \eqref{compl_erg_ax} follow immediately from the equation above.

Now, taking $(z,w)=(\overline{y}_k,\overline{b}_k)$ in equation \eqref{sum identity Phi} and rearranging the terms we have
\begin{align*}
-\sum_{j=1}^k\rho_j\gamma_j\phi_j(\overline{y}_k,\overline{b}_k) = & \,\,\dfrac{1}{2}\norm{(\overline{y}_k,\overline{b}_k)-(z_0,w_0)}^2 - \dfrac{1}{2}\norm{(\overline{y}_k,\overline{b}_k)-(z_k,w_k)}^2\\
&\,\, - \frac{1}{2}\sum_{j=1}^k\rho_j(2-\rho_j)\gamma_j^2\norm{\nabla\phi_j}^2.
\end{align*}
Since $\rho_j\in\left]0,2\right[$ for all integer $j\geq1$, the equation above implies
\begin{equation}
\label{eq:eq1-teor-erg-bound}
-\sum_{j=1}^k\rho_j\gamma_j\phi_j(\overline{y}_k,\overline{b}_k) \leq \,\,\dfrac{1}{2}\norm{(\overline{y}_k,\overline{b}_k)-(z_0,w_0)}^2.
\end{equation}
Next, we define $(\overline{z}_k,\overline{w}_k):=\dfrac{1}{\Gamma_k}\sum\limits_{j=1}^k\rho_j\gamma_j(z_{j-1},w_{j-1})$ and use the triangle inequality for norms to obtain
\begin{equation}
\label{eq:eq2-teor-erg-bound}
\begin{split}
\dfrac{1}{2}\norm{(\overline{y}_k,\overline{b}_k)-(z_0,w_0)}^2 \leq &\, \norm{(\overline{y}_k,\overline{b}_k)-(\overline{z}_k,\overline{w}_k)}^2 + \norm{(\overline{z}_k,\overline{w}_k)-(z_0,w_0)}^2\\
 \leq & \, \dfrac{1}{\Gamma_k}\sum_{j=1}^k\rho_j\gamma_j\norm{(y_j,b_j)-(z_{j-1},w_{j-1})}^2\\
&\,\,\, + \dfrac{1}{\Gamma_k}\sum_{j=1}^k\rho_j\gamma_j\norm{(z_{j-1},w_{j-1})-(z_0,w_0)}^2\\
\leq & \, \dfrac{1}{\Gamma_k}\sum_{j=1}^k\rho_j\gamma_j\norm{(y_j,b_j)-(z_{j-1},w_{j-1})}^2 + 4d_0^2,
\end{split}
\end{equation}
where the second and the third inequalities above are due to the convexity of $\norm{\cdot}^2$ and the second bound in \eqref{dist_est}, respectively.

Combining \eqref{eq:eq1-teor-erg-bound} with \eqref{eq:eq2-teor-erg-bound} we deduce that 
\begin{equation*}
-\sum_{j=1}^k\rho_j\gamma_j\phi_j(\overline{y}_k,\overline{b}_k) \leq \dfrac{1}{\Gamma_k}\sum_{j=1}^k\rho_j\gamma_j\norm{(y_j,b_j)-(z_{j-1},w_{j-1})}^2 + 4d_0^2.
\end{equation*}
Relation above, together with the last equality in \eqref{eq:rew-erg-ite}, now yields \eqref{compl_erg_ep}.
\end{proof}

\subsection{Stopping Criterion}
\label{subsec:stopping criterion}
In order to analyze the complexity properties of instances of Algorithm \ref{alg_genr_proj}, we define a termination condition for this general method in terms of the $\epsilon$-enlargements of operators $A$ and $B$. This criterion will enable the obtention of complexity bounds, proportional to the distance 
of the initial iterate to the extended solution set $S_e(A,B)$, for all the schemes presented in this work.

We consider the following stopping criterion for Algorithm \ref{alg_genr_proj}. Given an arbitrary pair of scalars $\delta$, $\epsilon>0$, Algorithm \ref{alg_genr_proj} will stop when it finds a
pair of points $(x,b,\epsilon^{x})$, $(y,a,\epsilon^{y})$ $\in\mathbb{E}$ such that 
\begin{align}
\label{stop_criterion}
b \in B^{\epsilon^{x}}(x), & & a \in A^{\epsilon^{y}}(y), & & \max\{\norm{a+b},\norm{x-y}\} \leq \delta, & & \max\{\epsilon^{x},\epsilon^{y}\} \leq \epsilon. 
\end{align}
We observe that if $\delta=\epsilon=0$, in view of Proposition \ref{Prop enlargement}, the above termination criterion is reduced to $b\in B(x)$, $a\in A(y)$, $x=y$ and $b=-a$, in which case $(x,b)\in S_e(A,B)$.

Based on the termination condition \eqref{stop_criterion} we can define the following notion of approximate solutions of problem \eqref{problem}.
\begin{definition}
\emph{
For a given pair of positive scalars $(\delta,\epsilon)$, a pair $(x,y)\in\R^n\times\R^n$ is called a $(\delta,\epsilon)$\emph{-approximate solution} (or $(\delta,\epsilon)$\emph{-solution}) of problem (\ref{problem}), if there exist $b,a\in\mathbb{R}^{n}$ and $\epsilon^{x},\epsilon^{y}\in\mathbb{R}_{+}$ such that the relations in (\ref{stop_criterion}) hold. }
\end{definition}

\section{The Family of Projective Splitting Methods}
\label{complexity}
Our goal in this section is to establish the complexity analysis of the family of PSM developed in 
\cite{eck_sv_08} for solving \eqref{problem}. First, we will observe that the PSM is an instance of the general 
Algorithm \ref{alg_genr_proj} with the feature of solving two proximal subproblems exactly, one involving only $A$ and the other one only $B$, for obtaining the decomposable separator in step 2 of Algorithm \ref{alg_genr_proj}. This will allow us to use the results of section \ref{sec:general framework} to derive
general iteration-complexity bounds for the PSM. Such bounds will be expressed in terms of the parameter sequences $\{\lambda_k\}$, 
$\{\mu_k\}$, $\{\rho_k\}$ and $\{\alpha_k\}$, calculated at each iteration of the method (see the PSM below). In 
subsection \ref{specialized}, we will specialize these results for the case where global convergence was obtained 
in \cite{eck_sv_08}.

We start by stating the family of projective splitting methods (PSM).
\begin{algorithmn}[\textbf{PSM}]
Choose $(z_{0},w_{0})\in\R^n\times\R^n$. For $k=1,2,\dots$
\begin{itemize}
\item[1.] Choose $\lambda_{k}$, $\mu_{k}>0$ and $\alpha_{k}\in\mathbb{R}$ such that 
\begin{equation}
\label{ass_con}
\frac{\mu_{k}}{\lambda_{k}}-\left(\frac{\alpha_{k}}{2}\right)^{2}>0,
\end{equation}
and find $(x_{k},b_{k})\in\graf{B}$ and $(y_{k},a_{k})\in\graf{A}$ such that
\begin{align}
\label{sub_prob_b}
\lambda_{k}b_{k} + x_{k} &= z_{k-1} + \lambda_{k}w_{k-1}, \\
\label{sub_prob_a}
\mu_{k}a_{k} + y_{k} &= (1-\alpha_{k})z_{k-1} + \alpha_{k}x_{k} - \mu_{k}w_{k-1}.
\end{align}
\item[2.] If $\norm{a_{k}+b_{k}} + \norm{x_{k}-y_{k}}=0$ stop. Otherwise, set
\begin{equation}
\label{def_gamma}
\gamma_{k} = \frac{\inpr{z_{k-1}-x_{k}}{b_{k}-w_{k-1}} + \inpr{z_{k-1}-y_{k}}{a_{k}+w_{k-1}}}{\norm{a_{k}+b_{k}}^{2} + \norm{x_{k}-y_{k}}^{2}}.
\end{equation}
\item[3.] Choose a parameter $\rho_{k}\in\left]0,2\right[$ and set
\begin{equation}
\label{upd_rule}
\begin{split}
z_{k} = & \,\,z_{k-1} - \rho_{k}\gamma_{k}(a_{k}+b_{k}),\\
w_{k} = & \,\,w_{k-1} - \rho_{k}\gamma_{k}(x_{k}-y_{k}).
\end{split}
\end{equation}
\end{itemize}
\end{algorithmn}

Several remarks are in order. The PSM is the same as \cite[Algorithm 2]{eck_sv_08}, except for the stopping criterion in step 2 above and boundedness 
conditions imposed on the parameters $\rho_{k}$, $\lambda_{k}$ and $\mu_{k}$ in \cite{eck_sv_08}.  Note that if $\norm{a_{k}+b_{k}}+\norm{x_{k}-y_{k}}=0$ for some $k$, then $x_{k}=y_{k}$, $b_{k}=-a_{k}$ and, since the points $(x_{k},b_{k})$ and $(y_{k},a_{k})$ are chosen in the graph of $B$ and $A$, respectively, we have $(x_{k},b_{k})\in\exset{A}{B}$. Therefore, when the PSM stops in step 2, it has found a point in the extended solution set.

Observe also that, since $B$ is maximal monotone, Minty's theorem \cite{minty} implies that the resolvent mappings $(I+\lambda_{k}B)^{-1}$ are everywhere defined and single valued for all integer $k\geq1$. Hence, by (\ref{sub_prob_b}) we have that the points $x_{k}=(I+\lambda_k B)^{-1}(z_{k-1}+\lambda_{k}w_{k-1})$ and $b_{k}=(1/\lambda_k)(z_{k-1}-x_k)+w_{k-1}$ exist and are unique. Similarly, the maximal monotonicity of $A$, together with (\ref{sub_prob_a}), guarantees the\,\,\, existence\,\,\, and\,\,\, uniqueness\,\,\, of\,\,\, $y_{k}=(I+\mu_kA)^{-1}((1-\alpha_k)z_{k-1}+\alpha_kx_k-\mu_kw_{k-1})$\,\,\, and $a_{k}=(1/\mu_k)((1-\alpha_k)z_{k-1}+\alpha_kx_k)-w_{k-1}$.

Moreover, if for $k=1,2,\ldots$, we denote by $\phi_{k}$ the decomposable separator (see Definition \ref{def_dec_sep_gen}) associated with the triplets $(x_{k},b_{k},0)$ and $(y_{k},a_{k},0)$, calculated in step 1 of the PSM, 
then the update rule in step 3 of the PSM can be restated as
\begin{equation*}
(z_{k},w_{k}) = (z_{k-1},w_{k-1}) - \rho_{k}\gamma_{k}\nabla\phi_{k}.
\end{equation*}
Consequently, the family of PSM falls within the general framework of Algorithm \ref{alg_genr_proj}, and the results of section \ref{sec:general framework} apply.

Finally, note that the PSM generates, on each iteration, a pair $(x_k,y_k)$ and vectors 
$b_k$, $a_k\in\R^n$ such that the inclusions in \eqref{stop_criterion} hold with $(x,b,\epsilon^x)=(x_k,b_k,0)$ and $(y,a,\epsilon^y)=(y_k,a_k,0)$. Hence,
we can try to develop bounds for the quantities $\norm{a_{k}+b_{k}}$ and $\norm{x_{k}-y_{k}}$ to estimate when an iterate $(x_k,y_k)$ is bound to satisfy the termination criterion \eqref{stop_criterion}.

Before establishing the iteration-complexity results for the PSM, we need the following technical result.  
It presents two lower bounds for $\phi_k(z_{k-1},w_{k-1})$.

\begin{lemma}
\label{Lem_est_phi}
Let $\{(x_{k},b_{k})\}$, $\{(y_{k},a_{k})\}$, $\{(z_{k},w_{k})\}$, $\{\lambda_k\}$, $\{\mu_k\}$, $\{\alpha_k\}$ and $\{\rho_k\}$ be the sequences generated by the PSM, and $\{\phi_k\}$ be the 
sequence of decomposable separators associated with the PSM. Then, for all integer $k\geq1$, the following inequalities hold
\begin{align}
\label{est_ph_1}
&\phi_{k}(z_{k-1},w_{k-1}) \geq \frac{\theta_{k}}{\delta_{k}}\left(\norm{a_{k}+b_{k}}^{2} + \norm{x_{k}-y_{k}}^{2}\right),\\
\label{est_ph_2}
&\phi_{k}(z_{k-1},w_{k-1}) \geq \frac{\theta_{k}}{\mu_{k}}\left(\norm{z_{k-1}-y_{k}}^{2} + \norm{w_{k-1}-b_{k}}^{2}\right),
\end{align}
where $\delta_{k}:=\mu_{k}+(1-\alpha_{k})\lambda_{k}>0$ and $\theta_{k}>0$ is the smallest eigenvalue of the matrix 
\begin{equation*}
\left(\begin{array}{cc}
1 & -\frac{\lambda_{k}|\alpha_{k}|}{2}\\
-\frac{\lambda_{k}|\alpha_{k}|}{2} & \lambda_{k}\mu_{k}
\end{array}\right).
\end{equation*}
\end{lemma}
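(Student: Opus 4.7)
The plan is to reduce both inequalities to the spectral bound $N\succeq \theta_k I$ via two algebraic identities. I begin by eliminating the iterates in favour of the auxiliary pair $u := b_k - w_{k-1}$ and $v := a_k + w_{k-1}$. From \eqref{sub_prob_b}--\eqref{sub_prob_a} one reads off $z_{k-1}-x_k = \lambda_k u$, $w_{k-1}-b_k = -u$, $z_{k-1}-y_k = \alpha_k\lambda_k u + \mu_k v$, $a_k+b_k = u+v$, and $x_k-y_k = -(1-\alpha_k)\lambda_k u + \mu_k v$. Plugging these into the definition of the decomposable separator (with $\epsilon^x_k = \epsilon^y_k = 0$) produces the basic expansion
\[
\phi_k(z_{k-1},w_{k-1}) = \lambda_k\|u\|^2 + \mu_k\|v\|^2 + \alpha_k\lambda_k\inpr{u}{v}.
\]

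The decisive step is to verify the two identities
\begin{align*}
\mu_k\,\phi_k(z_{k-1},w_{k-1}) & = \|z_{k-1}-y_k\|^2 + \lambda_k\mu_k\|w_{k-1}-b_k\|^2 + \alpha_k\lambda_k\inpr{z_{k-1}-y_k}{w_{k-1}-b_k},\\
\delta_k\,\phi_k(z_{k-1},w_{k-1}) & = \|x_k-y_k\|^2 + \lambda_k\mu_k\|a_k+b_k\|^2 - \alpha_k\lambda_k\inpr{a_k+b_k}{x_k-y_k}.
\end{align*}
The first is a direct expansion of the right-hand side in $(u,v)$. The second is obtained by inverting the $2\times 2$ linear map $(u,v)\mapsto(a_k+b_k,\,x_k-y_k)$; its determinant equals exactly $\delta_k$, positive under \eqref{ass_con}. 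Substituting the resulting expressions for $u$ and $v$ back into the formula for $\phi_k(z_{k-1},w_{k-1})$ and collecting terms triggers several cancellations driven by $\alpha_k\lambda_k = \lambda_k - (1-\alpha_k)\lambda_k$, so that the only surviving coefficients on the right-hand side are exactly $1$, $\lambda_k\mu_k$ and $-\alpha_k\lambda_k$ (after the common factor $\delta_k$ is cleared). Discovering---and checking---these identities, together with the specific scalings $\mu_k$ and $\delta_k$ that expose the $(1,\lambda_k\mu_k,-\lambda_k|\alpha_k|/2)$-pattern of $N$, is the main obstacle; everything after it is routine.

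Once the identities are in hand, Cauchy--Schwarz applied to each cross-term gives $\alpha_k\lambda_k\inpr{P}{Q}\geq -\lambda_k|\alpha_k|\|P\|\|Q\|$, so both right-hand sides are bounded below by the scalar quantity $\|P\|^2 + \lambda_k\mu_k\|Q\|^2 - \lambda_k|\alpha_k|\|P\|\|Q\|$, which is precisely the quadratic form associated with $N$ evaluated at $(\|P\|,\|Q\|)$. Since $\theta_k$ is the smallest eigenvalue of $N$, this quadratic form dominates $\theta_k(\|P\|^2+\|Q\|^2)$. Taking $(P,Q)=(z_{k-1}-y_k,\,w_{k-1}-b_k)$ yields \eqref{est_ph_2} and $(P,Q)=(x_k-y_k,\,a_k+b_k)$ yields \eqref{est_ph_1}. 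Positivity of all the constants involved is a by-product of \eqref{ass_con}: $\det N = \lambda_k^2\bigl((\mu_k/\lambda_k)-\alpha_k^2/4\bigr)>0$ ensures $\theta_k>0$, and the same hypothesis forces $\delta_k = \mu_k+(1-\alpha_k)\lambda_k>0$ because $\mu_k/\lambda_k>\alpha_k^2/4\geq \alpha_k-1$.
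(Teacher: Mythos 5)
Your proof is correct, and it is more self-contained than the paper's. The paper proves only \eqref{est_ph_2} here: inequality \eqref{est_ph_1}, together with the positivity of $\theta_k$ and $\delta_k$ under \eqref{ass_con}, is simply cited from Proposition 3 of the Eckstein--Svaiter paper. For \eqref{est_ph_2} the paper's route is the same as yours in substance --- it writes $\phi_k(z_{k-1},w_{k-1})=\inpr{a_k+b_k}{z_{k-1}-y_k}+\inpr{x_k-y_k}{w_{k-1}-b_k}$, uses \eqref{sub_prob_b}--\eqref{sub_prob_a} to express $x_k-y_k$ and $a_k+b_k$ in terms of $z_{k-1}-y_k$ and $w_{k-1}-b_k$, lands on $\tfrac{1}{\mu_k}\norm{z_{k-1}-y_k}^2+\tfrac{\alpha_k\lambda_k}{\mu_k}\inpr{z_{k-1}-y_k}{w_{k-1}-b_k}+\lambda_k\norm{w_{k-1}-b_k}^2$ (your first identity divided by $\mu_k$), and finishes with Cauchy--Schwarz and the smallest-eigenvalue bound exactly as you do. What your $(u,v)=(b_k-w_{k-1},\,a_k+w_{k-1})$ parametrization buys is symmetry: the single expansion $\phi_k=\lambda_k\norm{u}^2+\mu_k\norm{v}^2+\alpha_k\lambda_k\inpr{u}{v}$ yields both identities, and I have checked that the coefficient bookkeeping in the second one (the $\delta_k$-identity) is right --- the $\norm{u}^2$, $\norm{v}^2$ and $\inpr{u}{v}$ coefficients of $\norm{x_k-y_k}^2+\lambda_k\mu_k\norm{a_k+b_k}^2-\alpha_k\lambda_k\inpr{a_k+b_k}{x_k-y_k}$ all carry the common factor $\delta_k$. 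Your closing observations that $\det N=\lambda_k^2\bigl(\mu_k/\lambda_k-\alpha_k^2/4\bigr)>0$ forces $\theta_k>0$ and that $\alpha_k^2/4\geq\alpha_k-1$ forces $\delta_k>0$ are also correct and fill in details the paper leaves to the citation.
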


\begin{proof}
Inequality (\ref{est_ph_1}) was obtained in \cite[Proposition 3]{eck_sv_08} as part of the convergence proof of Algorithm 2 in \cite{eck_sv_08}, as were the assertions that $\theta_{k},\,\delta_{k}>0$ 
under assumption (\ref{ass_con}). Therefore, we only need to prove here relation (\ref{est_ph_2}).

If we subtract $y_{k}$ from both sides of (\ref{sub_prob_b}) and rearrange the terms we obtain
\begin{equation}
\label{eql_1}
x_{k} - y_{k} = z_{k-1}-y_{k} + \lambda_{k}(w_{k-1}-b_{k}).
\end{equation}
Now, adding $\mu_{k}b_{k}$ to both sides of (\ref{sub_prob_a}) and rearranging we have
\begin{align}
\nonumber
\mu_{k}(a_{k}+b_{k}) &= (1-\alpha_{k})z_{k-1} + \alpha_{k}x_{k} - y_{k} - \mu_{k}(w_{k-1}-b_{k})\\
\label{eql_2}
&= \alpha_{k}(x_{k}-y_{k}) + (1-\alpha_{k})(z_{k-1}-y_{k}) - \mu_{k}(w_{k-1}-b_{k}).
\end{align}
Next, we substitute (\ref{eql_1}) into (\ref{eql_2}) and divide by $\mu_{k}$ to obtain
\begin{align}
\nonumber
a_{k}+b_{k} &= \frac{\alpha_{k}}{\mu_{k}}(z_{k-1}-y_{k} + \lambda_{k}(w_{k-1}-b_{k})) + \frac{(1-\alpha_{k})}{\mu_{k}}(z_{k-1}-y_{k}) - (w_{k-1}-b_{k})\\
\label{eql_3}
& = \frac{1}{\mu_{k}}(z_{k-1}-y_{k}) + \left(\frac{\alpha_{k}\lambda_{k}}{\mu_{k}}-1\right)(w_{k-1}-b_{k}).
\end{align}
Since
\begin{align*}
\phi_{k}(z_{k-1},w_{k-1}) = & \inpr{a_{k}+b_{k}}{z_{k-1}-y_{k}} + \inpr{x_{k}-y_{k}}{w_{k-1}-b_{k}},
\end{align*}
equation above, together with (\ref{eql_1}) and (\ref{eql_3}), yields
\begin{equation*}
\begin{split}
\phi_{k}(z_{k-1},w_{k-1}) = \,&\frac{1}{\mu_{k}}\norm{z_{k-1}-y_{k}}^{2} + \frac{\alpha_{k}\lambda_{k}}{\mu_{k}}\inpr{z_{k-1}-y_{k}}{w_{k-1}-b_{k}} + \lambda_{k}\norm{w_{k-1}-b_{k}}^{2}\\
\geq\, & \frac{1}{\mu_{k}}\norm{z_{k-1}-y_{k}}^{2} - \frac{|\alpha_{k}|\lambda_{k}}{\mu_{k}}\norm{z_{k-1}-y_{k}}\norm{w_{k-1}-b_{k}} + \lambda_{k}\norm{w_{k-1}-b_{k}}^{2}\\
=\,& \frac{1}{\mu_{k}}
\left(\begin{array}{c}
\norm{z_{k-1}-y_{k}}\\
\norm{w_{k-1}-b_{k}}
\end{array}\right)^{T}
\left(\begin{array}{cc}
1 & -\frac{\lambda_{k}|\alpha_{k}|}{2}\\
-\frac{\lambda_{k}|\alpha_{k}|}{2} & \lambda_{k}\mu_{k}
\end{array}\right)
\left(\begin{array}{c}
\norm{z_{k-1}-y_{k}}\\
\norm{w_{k-1}-b_{k}}
\end{array}\right),
\end{split}
\end{equation*}
where the inequality in the above relation follows from the Cauchy-Schwartz inequality. Finally, (\ref{est_ph_2}) follows from the expression above and the definition of $\theta_{k}$.
\end{proof}

For simplicity, the convergence rate results presented below suppose that the PSM never stops in step 2,
i.e. they are assuming that $\norm{\nabla\phi_{k}}>0$ for all integer $k\geq1$. However, they can easily be restated without assuming such condition by saying that either the conclusion stated below holds or  $(x_{k},b_{k})$ is a point in $\exset{A}{B}$.

Next result estimates the quality of the best iterate among $(x_{1},y_{1}),\ldots,(x_{k},y_{k})$ in terms of the stopping criterion \eqref{stop_criterion}. 
We refer to these estimates as \emph{pointwise} complexity bounds for the PSM. 

\begin{theorem}
\label{Lem_compl_it}
Let $\{(x_{k},b_{k})\}$, $\{(y_{k},a_{k})\}$, $\{\lambda_k\}$, $\{\mu_k\}$, $\{\alpha_k\}$, $\{\gamma_k\}$ and $\{\rho_k\}$ be the sequences generated by the PSM. 
Then, for every integer $k\geq1$, we have
\begin{equation}
\label{inclusions it}
b_{k}\in B(x_{k}),\qquad\qquad a_{k}\in A(y_{k}),
\end{equation}
and there exists an index $1\leq i \leq k$ such that
\begin{equation}
\label{compl_est_1}
\norm{a_{i}+b_{i}}^{2}+\norm{x_{i}-y_{i}}^{2} \leq \frac{d_{0}^{2}}{\sum\limits_{j=1}^{k}\rho_{j}(2-\rho_{j})\left(\frac{\theta_{j}}{\delta_{j}}\right)^2},
\end{equation}
where $d_{0}$ is the distance of the first iterate $(z_{0},w_{0})$ to $\exset{A}{B}$, and $\theta_k$ and $\delta_k$ are defined in Lemma $\ref{Lem_est_phi}$.
\end{theorem}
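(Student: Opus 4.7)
The plan is to exploit that the PSM fits the abstract framework of Algorithm 3.1 with $\epsilon^x_k=\epsilon^y_k=0$, so Theorem \ref{Prop general proj} applies verbatim and yields the summable bound
\begin{equation*}
\sum_{j=1}^{k}\rho_j(2-\rho_j)\gamma_j^2\norm{\nabla\phi_j}^2 \le d_0^2.
\end{equation*}
The inclusions in \eqref{inclusions it} are immediate from step 1 of the PSM, which selects $(x_k,b_k)\in\graf{B}$ and $(y_k,a_k)\in\graf{A}$ (equivalently, from the definitions of the resolvents, as noted in the remarks following the algorithm).

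Next I would compute $\norm{\nabla\phi_j}^2$ explicitly. By the rewriting \eqref{rewriting_phi} of the decomposable separator, $\nabla\phi_j=(a_j+b_j,\,x_j-y_j)$, so
\begin{equation*}
\norm{\nabla\phi_j}^2=\norm{a_j+b_j}^2+\norm{x_j-y_j}^2.
\end{equation*}
This identifies the quantity whose smallness we ultimately want to control with the factor appearing inside the sum.

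The crux of the argument is to produce a clean lower bound for $\gamma_j$ in terms of $\theta_j/\delta_j$. When $\gamma_j>0$, step 2 of Algorithm \ref{alg_genr_proj} gives $\gamma_j=\phi_j(z_{j-1},w_{j-1})/\norm{\nabla\phi_j}^2$, and the first estimate \eqref{est_ph_1} of Lemma \ref{Lem_est_phi} yields
\begin{equation*}
\gamma_j \;\ge\; \frac{\theta_j}{\delta_j}.
\end{equation*}
(The case $\gamma_j=0$ would force $\phi_j(z_{j-1},w_{j-1})\le 0$, which by \eqref{est_ph_1} implies $\norm{\nabla\phi_j}=0$, contradicting our assumption that the PSM does not stop; so this case is vacuous.) Substituting $\gamma_j^2\ge(\theta_j/\delta_j)^2$ into the summable bound from Theorem \ref{Prop general proj} gives
\begin{equation*}
\sum_{j=1}^{k}\rho_j(2-\rho_j)\left(\frac{\theta_j}{\delta_j}\right)^2\!\bigl(\norm{a_j+b_j}^2+\norm{x_j-y_j}^2\bigr)\le d_0^2.
\end{equation*}

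Finally I would pick $i\in\{1,\dots,k\}$ minimizing $\norm{a_j+b_j}^2+\norm{x_j-y_j}^2$, pull this minimum out of the sum (the coefficients $\rho_j(2-\rho_j)(\theta_j/\delta_j)^2$ being nonnegative), and rearrange to obtain \eqref{compl_est_1}. The only subtle step is the lower bound $\gamma_j\ge\theta_j/\delta_j$, which is really just Lemma \ref{Lem_est_phi}(a) combined with the definition of $\gamma_j$; every other ingredient is a one-line specialization of the general framework.
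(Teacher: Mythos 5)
Your proposal is correct and follows essentially the same route as the paper's proof: the inclusions from step 1, the lower bound $\gamma_j\ge\theta_j/\delta_j$ from \eqref{est_ph_1} combined with the identity $\gamma_j=\phi_j(z_{j-1},w_{j-1})/\norm{\nabla\phi_j}^2$, substitution into the first bound of \eqref{dist_est}, and extraction of the index minimizing $\norm{\nabla\phi_j}^2$. The only cosmetic difference is your explicit handling of the (vacuous) case $\gamma_j=0$, which the paper dispenses with by its standing assumption that the method never stops in step 2.
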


\begin{proof}
The assertions that $b_{k}\in B(x_{k})$ and $a_{k}\in A(y_{k})$ are direct consequences of step 1 in the PSM.
The definition of $\gamma_{j}$ in step 2 of the method, together with the definition of $\phi_{j}$  and inequality (\ref{est_ph_1}), yields
\begin{equation}
\label{est_gamma_j}
\gamma_{j}=\frac{\phi_{j}(z_{j-1},w_{j-1})}{\norm{\nabla\phi_{j}}^{2}} \geq \frac{\theta_{j}}{\delta_{j}}\qquad\qquad \text{for }j=1,2,\ldots.
\end{equation}
Therefore,
\begin{equation*}
\gamma_j^2\geq\left(\dfrac{\theta_j}{\delta_j}\right)^2\quad\qquad\qquad\text{for }j=1,2,\dots.
\end{equation*}
Multiplying both sides of the inequality above by $\rho_j(2-\rho_j)\norm{\nabla\phi_j}^2$, adding from $j=1$ to $k$ and using the first bound in (\ref{dist_est}), we have
\begin{equation*}
d_{0}^{2} \geq \sum_{j=1}^{k}\norm{\nabla\phi_{j}}^{2}\rho_{j}(2-\rho_{j})\left(\frac{\theta_{j}}{\delta_{j}}\right)^2.
\end{equation*}
Taking $i$ such that
\begin{equation*}
 i\in\arg\min_{j=1,\ldots,k}\left(\norm{\nabla\phi_{j}}^{2}\right),
\end{equation*}
and using the previous inequality we obtain
\begin{equation*}
 d_{0}^{2} \geq \sum_{j=1}^{k}\rho_{j}(2-\rho_{j})\left(\frac{\theta_{j}}{\delta_{j}}\right)^2\norm{\nabla\phi_i}^2.
\end{equation*}
Bound \eqref{compl_est_1} now follows from the above relation and noting that $\nabla\phi_{i}=(a_{i}+b_{i},x_{i}-y_{i})$.
\end{proof}

We will now derive alternative complexity bounds for the PSM. 
Using the sequences of ergodic iterates associated with the PSM, defined as in subsection \ref{ergodic iterates}, we will obtain convergence rates for the methods in the ergodic sense.  We refer to these kind of estimates as \emph{ergodic} complexity bounds.

Define the sequences of ergodic means $\{(\overline{x}_{k},\overline{b}_k,\overline{\epsilon}^x_{k})\}$ and $\{(\overline{y}_{k},\overline{a}_k,\overline{\epsilon}^y_{k})\}$, associated with
the sequences $\{(x_{k},b_k,0)\}$, $\{(y_{k},a_k,0)\}$, $\{\gamma_k\}$ and $\{\rho_k\}$ generated by the PSM, as in \eqref{def_xy_erg}, \eqref{def_res_erg_x} and \eqref{def_res_erg_y}.
According to Lemma \ref{Lem_residual_erg}, we can attempt to bound the size of $\norm{\overline{a}_{k}+\overline{b}_{k}}$, $\norm{\overline{x}_{k}-\overline{y}_{k}}$, $\overline{\epsilon}^x_{k}$ and 
$\overline{\epsilon}^y_{k}$, in order to know when the ergodic iterates $\{\overline{x}_{k}\}$ and $\{\overline{y}_{k}\}$ will meet the stopping condition \eqref{stop_criterion}.

\begin{theorem}
\label{Teo_compl_erg}
Assume the hypotheses of Theorem $\ref{Lem_compl_it}$. In addition, consider the sequences of ergodic iterates $\{(\overline{x}_{k},\overline{b}_k,\overline{\epsilon}^x_{k})\}$ and
$\{(\overline{y}_{k},\overline{a}_k,\overline{\epsilon}^y_{k})\}$ associated with the sequences generated by the PSM, defined as in \eqref{def_xy_erg}, \eqref{def_res_erg_x} and \eqref{def_res_erg_y}. Then, for every integer $k\geq1$, we have
\begin{equation}
\label{inclusions erg}
\overline{b}_k\in B^{\overline{\epsilon}^x_{k}}(\overline{x}_k),\qquad\qquad \overline{a}_k\in A^{\overline{\epsilon}^y_{k}}(\overline{y}_k),
\end{equation}
and
\begin{align}
\label{compl_erg}
&\norm{\overline{a}_{k}+\overline{b}_{k}} \leq \frac{2d_{0}}{\Gamma_{k}}, &\norm{\overline{x}_{k}-\overline{y}_{k}} \leq  \frac{2d_{0}}{\Gamma_{k}},& &\overline{\epsilon}^x_{k} + \overline{\epsilon}^y_{k} \leq \frac{d_{0}^{2}(\varsigma_{k}+4)}{\Gamma_{k}},
\end{align}
where 
\begin{equation*}
\varsigma_{k}:=\max_{j=1,\ldots,k}\left\lbrace\frac{\mu_{j}}{\theta_{j}(2-\rho_{j})\Gamma_{k}}\right\rbrace.
\end{equation*}
\end{theorem}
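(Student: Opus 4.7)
The plan is to recognize that this theorem is essentially a specialization of the general framework developed in Section \ref{sec:general framework} together with the sharper estimate for $\phi_k(z_{k-1},w_{k-1})$ furnished by Lemma \ref{Lem_est_phi}. As noted earlier in the paper, the PSM is an instance of Algorithm \ref{alg_genr_proj} with $\epsilon^x_k=\epsilon^y_k=0$, so the ergodic sequences $\{(\overline{x}_k,\overline{b}_k,\overline{\epsilon}^x_k)\}$ and $\{(\overline{y}_k,\overline{a}_k,\overline{\epsilon}^y_k)\}$ are exactly the objects analyzed in subsection \ref{ergodic iterates}. Thus the inclusions \eqref{inclusions erg} follow directly from Lemma \ref{Lem_residual_erg}, and the first two bounds in \eqref{compl_erg} are precisely \eqref{compl_erg_ax} in Theorem \ref{Teo erg bound}. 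No PSM-specific structure is needed for these parts.

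For the bound on $\overline{\epsilon}^x_k+\overline{\epsilon}^y_k$, the starting point is the general estimate \eqref{compl_erg_ep}, and the task reduces to controlling the sum $\sum_{j=1}^k \rho_j\gamma_j\,\|(y_j,b_j)-(z_{j-1},w_{j-1})\|^2$ in terms of $d_0^2$ and the sequences $\{\lambda_j\}$, $\{\mu_j\}$, $\{\alpha_j\}$, $\{\rho_j\}$. The key step here will be to convert each term $\|(y_j,b_j)-(z_{j-1},w_{j-1})\|^2$ into a multiple of $\gamma_j\|\nabla\phi_j\|^2$, so that one can invoke the Fej\'er-type bound $\sum_{j=1}^k \rho_j(2-\rho_j)\gamma_j^2\|\nabla\phi_j\|^2\leq d_0^2$ from \eqref{dist_est}. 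This is exactly what inequality \eqref{est_ph_2} in Lemma \ref{Lem_est_phi} provides: combined with the identity $\phi_j(z_{j-1},w_{j-1})=\gamma_j\|\nabla\phi_j\|^2$ (valid whenever $\gamma_j>0$), it gives
\[
\|(y_j,b_j)-(z_{j-1},w_{j-1})\|^2\;\leq\;\frac{\mu_j}{\theta_j}\,\gamma_j\|\nabla\phi_j\|^2.
\]

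Multiplying by $\rho_j\gamma_j$ and forcing the factor $\rho_j(2-\rho_j)$ to appear by dividing and multiplying by $(2-\rho_j)$ (which is positive by step 3 of the PSM), each summand is bounded by $\frac{\mu_j}{\theta_j(2-\rho_j)}\,\rho_j(2-\rho_j)\gamma_j^2\|\nabla\phi_j\|^2$. Pulling the maximum over $j=1,\ldots,k$ out of the sum and applying \eqref{dist_est} gives
\[
\sum_{j=1}^k \rho_j\gamma_j\,\|(y_j,b_j)-(z_{j-1},w_{j-1})\|^2\;\leq\;\max_{1\leq j\leq k}\frac{\mu_j}{\theta_j(2-\rho_j)}\cdot d_0^2\;=\;\Gamma_k\varsigma_k\, d_0^2,
\]
by the definition of $\varsigma_k$. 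Plugging this back into \eqref{compl_erg_ep} yields the stated bound $(\varsigma_k+4)d_0^2/\Gamma_k$.

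I do not expect any serious obstacle: the general machinery of Section \ref{sec:general framework} already did the heavy lifting, and the only PSM-specific ingredient is the lower bound \eqref{est_ph_2}, which was established in Lemma \ref{Lem_est_phi}. The one mildly subtle point is that the bound on $\|(y_j,b_j)-(z_{j-1},w_{j-1})\|^2$ must be massaged into a form involving $\rho_j(2-\rho_j)\gamma_j^2\|\nabla\phi_j\|^2$ so as to feed into \eqref{dist_est}; this is what forces the appearance of the factor $(2-\rho_j)$ in the denominator inside $\varsigma_k$.
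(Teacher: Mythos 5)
Your proposal is correct and follows essentially the same route as the paper: inclusions via Lemma \ref{Lem_residual_erg}, the first two bounds via Theorem \ref{Teo erg bound}, and the $\overline{\epsilon}$-bound by combining \eqref{est_ph_2} with $\phi_j(z_{j-1},w_{j-1})=\gamma_j\norm{\nabla\phi_j}^2$, inserting the factor $(2-\rho_j)$, pulling out the maximum, and invoking \eqref{dist_est} before substituting into \eqref{compl_erg_ep}. The only cosmetic difference is that you carry the factor $\Gamma_k$ outside the maximum rather than absorbing it into $\varsigma_k$ from the start; the computation is identical.
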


\begin{proof}
Inclusions in \eqref{inclusions erg} are a consequence of Lemma \ref{Lem_residual_erg}. The first two inequalities in \eqref{compl_erg} are obtained by applying Theorem \ref{Teo erg bound}.

Now, we observe that relation (\ref{est_ph_2}), together with the equality in \eqref{est_gamma_j}, implies
\begin{equation*}
\frac{\mu_{j}}{\theta_{j}}\norm{\nabla\phi_{j}}^{2}\gamma_{j} \geq \norm{(y_{j},b_{j})-(z_{j-1},w_{j-1})}^{2}\qquad\quad \text{for } j=1,2,\dots.
\end{equation*}
Multiplying the inequality above by $\dfrac{1}{\Gamma_k}\rho_j\gamma_j$ and adding from $j=1$ to $k$, we obtain
\begin{align*}
\frac{1}{\Gamma_{k}}\sum_{j=1}^{k}\rho_{j}\gamma_{j}\norm{(y_{j},b_{j})-(z_{j-1},w_{j-1})}^{2} \leq &\, \frac{1}{\Gamma_{k}}\sum_{j=1}^{k}\frac{\mu_{j}}{\theta_{j}}\rho_{j}\gamma_{j}^{2}\norm{\nabla\phi_{j}}^{2}\\
= &\, \frac{1}{\Gamma_{k}}\sum_{j=1}^{k}\frac{\mu_{j}}{\theta_{j}(2-\rho_j)}\rho_{j}(2-\rho_j)\gamma_{j}^{2}\norm{\nabla\phi_{j}}^{2}\\
\leq &\, \left(\max_{j=1,\ldots,k}\left\lbrace\frac{\mu_{j}}{\theta_{j}(2-\rho_{j})\Gamma_{k}}\right\rbrace\right)\sum_{j=1}^{k}\rho_{j}(2-\rho_{j})\gamma_{j}^{2}\norm{\nabla\phi_{j}}^{2}\\
\leq & \,\left(\max_{j=1,\ldots,k}\left\lbrace\frac{\mu_{j}}{\theta_{j}(2-\rho_{j})\Gamma_{k}}\right\rbrace\right)d_{0}^{2},
\end{align*}
where the last inequality above follows from the first estimate in (\ref{dist_est}). We combine the relation above with (\ref{compl_erg_ep}) and the definition 
of $\varsigma_{k}$ to deduce the last bound in \eqref{compl_erg}.
\end{proof}

\subsection{Specialized Complexity Results}
\label{specialized}  
In this subsection, we will specialize the general pointwise and ergodic complexity bounds derived for the PSM in Theorems \ref{Lem_compl_it} and \ref{Teo_compl_erg}, respectively, for the case where global convergence was obtained in \cite{eck_sv_08}.

In \cite{eck_sv_08}, it was proven convergence of the sequences $\{(x_{k},b_{k})\}$, $\{(y_{k},-a_{k})\}$ and $\{(z_{k},w_{k})\}$, calculated by the PSM, to a point
of $S_{e}(A,B)$ under the following assumptions:

\begin{enumerate}
\item[(A.1)] there exist $\underline{\lambda}$ and $\overline{\lambda}$ such that, $\overline{\lambda}\geq\underline{\lambda}>0$ and $\lambda_{k},\mu_{k}\in[\underline{\lambda},\overline{\lambda}]$ for all integer $k\geq1$;
\item[(A.2)] there exists $\overline{\rho}\in\left[0,1\right[$ such that $\rho_{k}\in[1-\overline{\rho},1+\overline{\rho}]$ for all integer $k\geq1$;
\item[(A.3)] $\nu:=\inf\limits_{k}\left\lbrace\dfrac{\mu_{k}}{\lambda_{k}}-\left(\dfrac{\alpha_{k}}{2}\right)^{2}\right\rbrace>0$.
\end{enumerate}

Under hypotheses (A.1)-(A.3) we will show that the PSM has $\mathcal{O}(1/\sqrt{k})$ pointwise convergence
rate, while the rate in the ergodic sense is $\mathcal{O}(1/k)$.

\begin{theorem}
\label{Teo compl it esp}
Let $\{(x_{k},b_{k})\}$, $\{(y_{k},a_{k})\}$, $\{\lambda_k\}$, $\{\mu_k\}$, $\{\alpha_k\}$, $\{\gamma_k\}$ and $\{\rho_k\}$ be the sequences generated by the PSM under assumptions \emph{(A.1)-(A.3)}.
If $d_{0}$ denote the distance of $(z_{0},w_{0})$ to the extended solution set $\exset{A}{B}$. Then, for all integer $k\geq1$, we have
\begin{equation*}
b_k\in B(x_k),\qquad\qquad a_k\in A(y_k),
\end{equation*}
and there exists an index $1\leq i\leq k$ such that
\begin{align*}
\norm{a_{i}+b_{i}} \leq \frac{d_{0}\upsilon}{\sqrt{k}(1-\overline{\rho})}\qquad\quad\text{and}\qquad\quad
\norm{x_{i}-y_{i}} \leq \frac{d_{0}\upsilon}{\sqrt{k}(1-\overline{\rho})},
\end{align*}
where 
\begin{equation*}
\upsilon:=\frac{2\overline{\lambda}\left(1+\overline{\lambda}^2\right)\left(1+\sqrt{\overline{\lambda}/\underline{\lambda}}\right)}{\underline{\lambda}^{2}\nu}.
\end{equation*}
\end{theorem}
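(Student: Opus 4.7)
The plan is to specialize the pointwise bound \eqref{compl_est_1} of Theorem~\ref{Lem_compl_it} under (A.1)--(A.3). The inclusions $b_k\in B(x_k)$ and $a_k\in A(y_k)$ follow directly from that theorem, so the only work is to produce a uniform (in $j$) lower bound for each factor appearing in the denominator $\sum_{j=1}^{k}\rho_{j}(2-\rho_{j})(\theta_{j}/\delta_{j})^{2}$, and then convert the resulting squared bound into separate bounds on $\norm{a_{i}+b_{i}}$ and $\norm{x_{i}-y_{i}}$.

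For the relaxation factor I would use the identity $\rho_{j}(2-\rho_{j})=1-(1-\rho_{j})^{2}$; (A.2) yields $|1-\rho_{j}|\leq\overline{\rho}$, so $\rho_{j}(2-\rho_{j})\geq 1-\overline{\rho}^{2}\geq(1-\overline{\rho})^{2}$, which is what produces the factor $1-\overline{\rho}$ in the final bound. For $\theta_{j}/\delta_{j}$ I would proceed in three substeps. First, \eqref{ass_con} combined with (A.1) gives the a priori bound $|\alpha_{j}|\leq 2\sqrt{\mu_{j}/\lambda_{j}-\nu}\leq 2\sqrt{\overline{\lambda}/\underline{\lambda}}$. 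Second, this yields $\delta_{j}=\mu_{j}+(1-\alpha_{j})\lambda_{j}\leq\overline{\lambda}(2+|\alpha_{j}|)\leq 2\overline{\lambda}(1+\sqrt{\overline{\lambda}/\underline{\lambda}})$. Third, for the $2\times 2$ positive definite matrix in Lemma~\ref{Lem_est_phi} I would invoke the elementary fact that the smallest eigenvalue satisfies $\theta_{\min}\geq\det/\mathrm{tr}$; here the determinant equals $\lambda_{j}^{2}(\mu_{j}/\lambda_{j}-(\alpha_{j}/2)^{2})\geq\underline{\lambda}^{2}\nu$ by (A.1) and (A.3), while the trace is $1+\lambda_{j}\mu_{j}\leq 1+\overline{\lambda}^{2}$ by (A.1), so $\theta_{j}\geq\underline{\lambda}^{2}\nu/(1+\overline{\lambda}^{2})$. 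Dividing the two uniform estimates gives exactly $\theta_{j}/\delta_{j}\geq 1/\upsilon$ with $\upsilon$ as in the statement.

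Putting the bounds together, $\sum_{j=1}^{k}\rho_{j}(2-\rho_{j})(\theta_{j}/\delta_{j})^{2}\geq k(1-\overline{\rho})^{2}/\upsilon^{2}$, so \eqref{compl_est_1} gives $\norm{a_{i}+b_{i}}^{2}+\norm{x_{i}-y_{i}}^{2}\leq d_{0}^{2}\upsilon^{2}/[k(1-\overline{\rho})^{2}]$, and the announced inequalities follow by taking square roots (each summand is bounded by the whole sum). The main obstacle is the constant bookkeeping, i.e.\ ensuring that the resulting expression matches the stated $\upsilon$ exactly: what is crucial is using the eigenvalue estimate $\theta_{j}\geq\det/\mathrm{tr}$ rather than a cruder bound, and using (A.3) twice — once to lower bound the determinant in terms of $\nu$ and once to upper bound $|\alpha_{j}|$ — which together produce the factors $(1+\overline{\lambda}^{2})$ and $(1+\sqrt{\overline{\lambda}/\underline{\lambda}})$ in the denominator of $1/\upsilon$.
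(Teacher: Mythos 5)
Your proposal is correct and follows essentially the same route as the paper: the inclusions come from Theorem \ref{Lem_compl_it}, the factor $\rho_j(2-\rho_j)\geq(1-\overline{\rho})^2$ from (A.2), the bound $\delta_j\leq 2\overline{\lambda}(1+\sqrt{\overline{\lambda}/\underline{\lambda}})$ via $|\alpha_j|\leq 2\sqrt{\overline{\lambda}/\underline{\lambda}}$, and the bound $\theta_j\geq\underline{\lambda}^2\nu/(1+\overline{\lambda}^2)$, which the paper obtains from the explicit eigenvalue formula of \cite[Proposition 3]{eck_sv_08} but is precisely your $\det/\mathrm{tr}$ estimate. The only cosmetic differences are that you derive the eigenvalue bound directly rather than citing the closed form, and you invoke (A.3) where the paper only needs the weaker condition \eqref{ass_con} to bound $|\alpha_j|$; both lead to the same constant $\upsilon$.
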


\begin{proof}
The inclusions in the statement of the theorem follow from \eqref{inclusions it}.
Now, we note that condition (A.2) implies
\begin{equation}
\label{rho_est}
\rho_{j}(2-\rho_{j})\geq(1-\overline{\rho})^{2}\qquad\qquad\text{for }\,\,j=1,2,\dots.
\end{equation}
Next, we observe that relation (\ref{ass_con}) in step 1 of the PSM yields $|\alpha_{j}|\leq2\sqrt{\mu_{j}/\lambda_{j}}$. Hence, assumption (A.1) implies 
\begin{equation*}
|\alpha_{j}|\leq2\sqrt{\overline{\lambda}/\underline{\lambda}}\quad\qquad\qquad\text{for }\,\,j=1,2,\dots.
\end{equation*}
The inequality above, together with the definition of $\delta_{j}$ in Lemma \ref{Lem_est_phi} and assumption (A.1), yields
\begin{equation}
\label{delta_est}
\delta_{j} \leq 2\overline{\lambda}\left(1+\sqrt{\overline{\lambda}/\underline{\lambda}}\right).
\end{equation}
Moreover, in \cite[Proposition 3]{eck_sv_08} it was proven that
\begin{align}
\nonumber
\theta_{j}: = &\,  \frac{1}{2}\left(1+\lambda_{j}\mu_{j}-\sqrt{(1+\lambda_{j}\mu_{j})^{2}-4(\lambda_{j}\mu_{j}-\left(\lambda_{j}\alpha_{j}/2)^{2}\right)}\right)\\
\nonumber
 \geq &\,\frac{\lambda_{j}^{2}(\mu_{j}/\lambda_{j}-(\alpha_{j}/2)^{2})}{1+\lambda_{j}\mu_{j}}.
\end{align}
Thus, under hypotheses (A.1)-(A.3) we have 
\begin{equation}
\label{theta_est}
\theta_j \geq \frac{\underline{\lambda}^{2}\nu}{1+\overline{\lambda}^2},
\end{equation}
and combining \eqref{delta_est} with \eqref{theta_est} we obtain
\begin{equation}\label{eq:thetha/delta-est}
\frac{\theta_j}{\delta_j}\geq\frac{\underline{\lambda}^2\nu}{\left(1+\overline{\lambda}^2\right)2\overline{\lambda}\left(1+\sqrt{\overline{\lambda}/\underline{\lambda}}\right)}=\frac{1}{\upsilon}\qquad\qquad\text{for }\,\,j=1,2,\ldots.
\end{equation}
Now, from inequalities \eqref{eq:thetha/delta-est} and (\ref{rho_est}) we deduce that
\begin{equation*}
\rho_{j}(2-\rho_{j})\left(\frac{\theta_{j}}{\delta_{j}}\right)^2\geq \frac{(1-\overline{\rho})^{2}}{\upsilon^2}
\end{equation*}
for $j=1,\ldots,k$. Hence, adding equation above from $j=1$  to $k$ we have
\begin{equation*}
\sum_{j=1}^{k}\rho_{j}(2-\rho_{j})\left(\frac{\theta_{j}}{\delta_{j}}\right)^2\geq k\frac{(1-\overline{\rho})^{2}}{\upsilon^2}.
\end{equation*}
The theorem follows combining the above expression with inequality (\ref{compl_est_1}).
\end{proof}

\begin{theorem}
\label{teo:erg-comp-esp}
Assume the hypotheses of Theorem $\ref{Teo compl it esp}$. Consider the sequences of ergodic iterates $\{(\overline{x}_{k},\overline{b}_{k},\overline{\epsilon}^x_{k})\}$ and $\{(\overline{y}_{k},\overline{a}_{k},\overline{\epsilon}^y_{k})\}$ associated with the sequences generated by the PSM, defined as in \eqref{def_xy_erg}, \eqref{def_res_erg_x} and \eqref{def_res_erg_y}. Then, for every integer $k\geq1$, we have 
\begin{equation}
\label{inclusion_erg}
\overline{b}_{k}\in B^{\overline{\epsilon}^x_{k}}(\overline{x}_{k}),\qquad\qquad \overline{a}_{k}\in A^{\overline{\epsilon}^y_{k}}(\overline{y}_{k}),
\end{equation}
and
\begin{align}
\label{ab}
\norm{\overline{a}_{k}+\overline{b}_{k}} \leq \frac{2d_{0}\upsilon}{k(1-\overline{\rho})},\qquad\norm{\overline{x}_{k}-\overline{y}_{k}} \leq \frac{2d_{0}\upsilon}{k(1-\overline{\rho})},\qquad
\overline{\epsilon}^x_{k} + \overline{\epsilon}^y_{k} \leq \frac{d_{0}^{2}\upsilon(\upsilon'_{k}+4)}{k(1-\overline{\rho})},
\end{align}
where $$\upsilon'_{k}:=\frac{\overline{\lambda}\left(1+\overline{\lambda}^2\right)\upsilon}{{\underline{\lambda}^2}\nu(1-\overline{\rho})^{2}k}.$$
\end{theorem}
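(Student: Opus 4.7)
The plan is to specialize the general ergodic bounds of Theorem \ref{Teo_compl_erg} to the parameter regime dictated by assumptions (A.1)--(A.3). The inclusions in \eqref{inclusion_erg} follow immediately from \eqref{inclusions erg} in Theorem \ref{Teo_compl_erg}, so no additional work is required there. The real task is to turn the abstract estimates
$$
\norm{\overline{a}_{k}+\overline{b}_{k}} \leq \frac{2d_{0}}{\Gamma_{k}},\qquad \norm{\overline{x}_{k}-\overline{y}_{k}} \leq \frac{2d_{0}}{\Gamma_{k}},\qquad \overline{\epsilon}^x_{k}+\overline{\epsilon}^y_{k} \leq \frac{d_{0}^{2}(\varsigma_{k}+4)}{\Gamma_{k}}
$$
into the claimed explicit bounds by controlling $\Gamma_k$ from below and $\varsigma_k$ from above.

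First I would reuse the estimates already obtained in the proof of Theorem \ref{Teo compl it esp}. From \eqref{eq:thetha/delta-est} together with the definition of $\gamma_j$ (equality in \eqref{est_gamma_j}) one has $\gamma_j\geq \theta_j/\delta_j\geq 1/\upsilon$, and from (A.2) one has $\rho_j\geq 1-\overline{\rho}$. Summing these lower bounds on $\rho_j\gamma_j$ for $j=1,\ldots,k$ gives
$$
\Gamma_k=\sum_{j=1}^{k}\rho_j\gamma_j\geq \frac{k(1-\overline{\rho})}{\upsilon}.
$$
Plugging this into the first two bounds of \eqref{compl_erg} yields the first two inequalities of \eqref{ab} directly.

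Second I would bound $\varsigma_k$. For each $j$, I would apply (A.1) to get $\mu_j\leq \overline{\lambda}$, use the lower bound \eqref{theta_est} on $\theta_j$, and use (A.2) to obtain $2-\rho_j\geq 1-\overline{\rho}$. Combining these with the just-derived lower bound on $\Gamma_k$ gives
$$
\frac{\mu_j}{\theta_j(2-\rho_j)\Gamma_k}\leq\frac{\overline{\lambda}(1+\overline{\lambda}^2)}{\underline{\lambda}^2\nu(1-\overline{\rho})}\cdot\frac{\upsilon}{k(1-\overline{\rho})}=\upsilon'_k,
$$
so $\varsigma_k\leq \upsilon'_k$. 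Substituting this bound on $\varsigma_k$ and the bound $1/\Gamma_k\leq \upsilon/(k(1-\overline{\rho}))$ into the third inequality of \eqref{compl_erg} produces the last bound in \eqref{ab}.

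There is essentially no obstacle; the proof is a bookkeeping exercise of assembling the constants from (A.1)--(A.3) into $\upsilon$ and $\upsilon'_k$. The only mildly delicate point is making sure that each factor $(1-\overline{\rho})$ appearing in $\Gamma_k$ and in $2-\rho_j$ is tracked correctly so that the final denominator in $\upsilon'_k$ indeed carries $(1-\overline{\rho})^2$ while the denominators in the first two bounds and the outer factor of the third bound carry only a single $(1-\overline{\rho})$; beyond that, the argument is purely a substitution.
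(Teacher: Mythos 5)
Your proposal is correct and follows essentially the same route as the paper's proof: both derive $\Gamma_k\geq k(1-\overline{\rho})/\upsilon$ from \eqref{est_gamma_j}, \eqref{eq:thetha/delta-est} and (A.2), then bound $\varsigma_k\leq\upsilon'_k$ using (A.1), \eqref{theta_est} and the lower bound on $\Gamma_k$, and substitute into \eqref{compl_erg}. The constant tracking, including the $(1-\overline{\rho})^2$ in $\upsilon'_k$, matches the paper exactly.
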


\begin{proof}
The inclusions in (\ref{inclusion_erg}) follow from Lemma \ref{Lem_residual_erg}. The definition of $\Gamma_{k}$, together with assumption (A.2) and equation (\ref{est_gamma_j}), yields
\begin{equation*}
\Gamma_{k}\geq (1-\overline{\rho})\sum_{j=1}^{k}\frac{\theta_{j}}{\delta_{j}}.
\end{equation*}
Therefore, by \eqref{eq:thetha/delta-est} and the inequality above we have
\begin{equation}
\label{gamma_bound}
\Gamma_{k} \geq  \frac{(1-\overline{\rho})k}{\upsilon}.
\end{equation}
The first two bounds in (\ref{ab}) now follow from (\ref{gamma_bound}) and the first two inequalities in (\ref{compl_erg}).

To conclude the proof we observe that the definition of $\varsigma_{k}$, hypothesis (A.1), (\ref{theta_est}) and (\ref{gamma_bound}) imply 
\begin{equation*}
\varsigma_k \leq \frac{\overline{\lambda}(1+\overline{\lambda}^2)\upsilon}{{\underline{\lambda}^2}\nu(1-\overline{\rho})^{2}k}.
\end{equation*}
Thus, combining the above relation with the last inequality in (\ref{compl_erg}), the definition of $\upsilon'_{k}$ and (\ref{gamma_bound}), we obtain the last bound in (\ref{ab}).
\end{proof}

We emphasize here that the derived bounds obtained in Theorem \ref{teo:erg-comp-esp} are asymptotically better than the ones obtained in Theorem \ref{Teo compl it esp}. Indeed, the bounds for $x_k$, $y_k$, $a_k$ and $b_k$ are $\mathcal{O}(1/\sqrt{k})$, whereas for $\overline{x}_k$, $\overline{y}_k$, $\overline{b}_k$, $\overline{a}_k$, $\overline{\epsilon}_k^x$ and $\overline{\epsilon}_k^y$ the bounds are $\mathcal{O}(1/k)$. However, the iterates calculated by the PSM are points in the graph of $A$ and $B$, while the ergodic iterates are in some outer approximation of the operators, namely they are points in an $\epsilon$-enlargement of $A$ and $B$.

The following result, which is an immediate consequence of Theorems \ref{Teo compl it esp} and \ref{teo:erg-comp-esp}, presents complexity bounds for the PSM to obtain $(\delta,\epsilon)$-approximate solutions of problem \eqref{problem}.

\begin{corollary}
Assume the hypotheses of Theorem $\ref{teo:erg-comp-esp}$. Then, the following statements hold.
\begin{itemize}
 \item[(a)] For every $\delta>0$ there exists an index 
 \begin{equation*}
 i = \mathcal{O}\left(\frac{d_{0}^{2}\upsilon^2}{\delta^{2}}\right)
 \end{equation*}
such that the iterate $(x_i,y_i)$ is a $(\delta,0)$-solution of problem \eqref{problem}. 
 \item[(b)] For every $\delta$, $\epsilon>0$ there exists an index
 \begin{equation*}
 k_{0} = \mathcal{O}\left(\max\left\lbrace \frac{d_{0}\upsilon}{\delta},\frac{d_{0}^{2}\upsilon}{\epsilon} \right\rbrace\right)
 \end{equation*}
 such that, for any $k \geq k_{0}$, the ergodic iterate $(\overline{x}_{k},\overline{y}_k)$ is a $(\delta,\epsilon)$-solution of problem \eqref{problem}. 
 \end{itemize}
\end{corollary}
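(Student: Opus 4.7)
The plan is to obtain both statements as direct bookkeeping consequences of the convergence-rate estimates already established in Theorems \ref{Teo compl it esp} and \ref{teo:erg-comp-esp}: one simply inverts the decay rates to determine, for each prescribed tolerance pair, how many iterations of the PSM are needed for the iterates to meet the stopping criterion \eqref{stop_criterion}.

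For part (a), I would first observe that the points $(x_i,b_i)\in\graf{B}$ and $(y_i,a_i)\in\graf{A}$ generated by the PSM satisfy $b_i\in B^0(x_i)$ and $a_i\in A^0(y_i)$ by Proposition \ref{Prop enlargement}(c), so the enlargement slots in \eqref{stop_criterion} are filled with $\epsilon^x=\epsilon^y=0$ at no cost, and the only remaining conditions are $\norm{a_i+b_i}\leq\delta$ and $\norm{x_i-y_i}\leq\delta$. Theorem \ref{Teo compl it esp} guarantees an index $1\leq i\leq k$ for which both quantities are bounded by $d_0\upsilon/(\sqrt{k}(1-\overline{\rho}))$; imposing this bound to be at most $\delta$ and solving for $k$ yields the threshold $k=\lceil d_0^2\upsilon^2/(\delta^2(1-\overline{\rho})^2)\rceil$, which is the claimed $\mathcal{O}(d_0^2\upsilon^2/\delta^2)$ complexity.

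For part (b), Theorem \ref{teo:erg-comp-esp} already supplies the inclusions $\overline{b}_k\in B^{\overline{\epsilon}^x_k}(\overline{x}_k)$ and $\overline{a}_k\in A^{\overline{\epsilon}^y_k}(\overline{y}_k)$ along with explicit bounds on $\norm{\overline{a}_k+\overline{b}_k}$, $\norm{\overline{x}_k-\overline{y}_k}$, and $\overline{\epsilon}^x_k+\overline{\epsilon}^y_k$. Since $\overline{\epsilon}^x_k,\overline{\epsilon}^y_k\geq 0$, one has $\max\{\overline{\epsilon}^x_k,\overline{\epsilon}^y_k\}\leq\overline{\epsilon}^x_k+\overline{\epsilon}^y_k$, so it is enough to drive the latter sum below $\epsilon$. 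Requiring the first two bounds in \eqref{ab} to be at most $\delta$ gives $k\geq 2d_0\upsilon/(\delta(1-\overline{\rho}))$, and requiring the third to be at most $\epsilon$ produces a second inequality of the form $k\geq Cd_0^2\upsilon/(\epsilon(1-\overline{\rho}))$; taking $k_0$ to be the maximum of the two lower bounds yields the asserted $\mathcal{O}(\max\{d_0\upsilon/\delta,\,d_0^2\upsilon/\epsilon\})$ complexity.

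The only mildly delicate point in this plan is that the prefactor multiplying $d_0^2\upsilon/\epsilon$ in the third bound of \eqref{ab} contains $\upsilon'_k$, which itself depends on $k$. I would handle this by noting that $\upsilon'_k=\mathcal{O}(1/k)$ with explicit constants in $\underline{\lambda}$, $\overline{\lambda}$, $\nu$, $\overline{\rho}$ and $\upsilon$, so $\upsilon'_k\leq 1$ once $k$ exceeds a fixed multiple of those constants; beyond that threshold $\upsilon'_k+4\leq 5$, and any remaining dependence is absorbed into the implied constant of the $\mathcal{O}$-notation, leaving the announced asymptotics intact.
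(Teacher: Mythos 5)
Your argument is correct and is exactly the route the paper intends: the corollary is presented there as an immediate consequence of Theorems \ref{Teo compl it esp} and \ref{teo:erg-comp-esp}, obtained by inverting the $\mathcal{O}(1/\sqrt{k})$ and $\mathcal{O}(1/k)$ bounds against the tolerances in \eqref{stop_criterion}. Your extra care with the $k$-dependence of $\upsilon'_k$ (bounding $\upsilon'_k+4$ by a constant once $k$ exceeds a fixed, tolerance-independent threshold) is a legitimate way to justify the stated asymptotics and fills in a detail the paper leaves implicit.
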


\section{Spingarn's Splitting Method}
\label{sec:spingarn}
In  this section, we study the iteration-complexity 
of the two-operator case of 
Spingarn's splitting algorithm. 
In \cite{spingarn}, Spingarn describes a partial inverse method for solving MIPs given by the sum of $m$ maximal monotone operators.
Spingarn's method computes, at each iteration, 
independent proximal subproblems on each of the $m$ operators involved in the problem and then finds the next iterate by essentially averaging the results. 
This algorithm is actually a special case of the Douglas-Rachford splitting method \cite{eck_ber_dr}, 
and it is also a particular instance of the general projective splitting methods for sums of $m$ maximal monotone operators, which were introduced in \cite{eck_sv_09}. 

Eckstein and Svaiter proved in \cite{eck_sv_08} that the $m=2$ case of the Spingarn splitting method is a special case of a scaled variant of the PSM. 
Interpreting Spingarn's algorithm as an instance of the PSM allows us to use the analysis developed in the previous section for obtaining its complexity bounds. 

For this purpose, let us begin with a brief discussion of the reformulation of problem (\ref{problem}) studied in \cite{eck_sv_08}, obtained via including a scale factor.
If $\eta>0$ is a fixed scalar, multiplying both sides of (\ref{problem}) by $\eta$ gives the problem 
\begin{equation*}
0\in\eta A(z) + \eta B(z).
\end{equation*}
This simple reformulation leaves the solution set unchanged, but it transforms the set $S_e(A,B)$. Indeed,
the extended solution set associated with operators $\eta A$ and $\eta B$ has the form
\begin{equation*}
\exset{\eta A}{\eta B} = \{(z,\eta w):(z,w)\in\exset{A}{B}\}.
\end{equation*}

If we apply the PSM to $\eta A$ and $\eta B$, and consider $\eta a_{k}$, $\eta b_{k}$ and $\eta w_{k}$, respectively, in place of $a_{k}$, $b_{k}$ and $w_{k}$, after some algebraic manipulations we obtain a scheme identical to the PSM, except that (\ref{sub_prob_b})-(\ref{upd_rule}) are modified to
\begin{align}
\label{ref_b}
& \lambda_{k}\eta b_{k} + x_{k} = z_{k-1} + \lambda_{k}\eta w_{k-1}, && b_{k}\in B(x_{k}),\\
\label{ref_a}
& \mu_{k}\eta a_{k} + y_{k} = (1-\alpha_{k})z_{k-1} + \alpha_{k}x_{k} - \mu_{k}\eta w_{k-1}, && a_{k}\in A(y_{k}),\\
\label{ref_gam}
& \gamma_{k}=\frac{\inpr{z_{k-1}-x_{k}}{b_{k}-w_{k-1}} + \inpr{z_{k-1}-y_{k}}{a_{k}+w_{k-1}}}{\eta\norm{a_{k}+b_{k}}^{2} + \frac{1}{\eta}\norm{x_{k}-y_{k}}^{2}},\\
\label{ref_z}
& z_{k} =  z_{k-1} - \rho_{k}\gamma_{k}\eta(a_{k}+b_{k}),\\
\label{ref_w}
& w_{k} =  w_{k-1} - \frac{\rho_{k}\gamma_{k}}{\eta}(x_{k}-y_{k}).
\end{align}
The general pointwise and ergodic complexity bounds for the method above are obtained as a direct consequence of Theorems \ref{Lem_compl_it} and \ref{Teo_compl_erg}, replacing $a_{i}$, $b_{i}$, $\overline{a}_{k}$, $\overline{b}_{k}$, $\overline{\epsilon}^x_{k}$ and $\overline{\epsilon}^y_{k}$ by $\eta a_{i}$, $\eta b_{i}$, $\eta\overline{a}_{k}$, $\eta\overline{b}_{k}$, $\eta\overline{\epsilon}^x_{k}$ and $\eta\overline{\epsilon}^y_{k}$, respectively.

If $\eta>0$, in our notation, the Spingarn splitting method is reduced to the following set of recursions:
\begin{align}
\label{sping_b}
& \eta b_{k} + x_{k} = z_{k-1} + \eta w_{k-1}, && b_{k}\in B(x_{k}),\\
\label{sping_a}
& \eta a_{k} + y_{k} = z_{k-1} - \eta w_{k-1}, && a_{k}\in A(y_{k}),\\
\label{sping_z+}
& z_{k} =  (1-\rho_{k})z_{k-1} + \frac{\rho_{k}}{2}(x_{k}+y_{k}),\\
\label{sping_w+}
& w_{k} = (1-\rho_{k})w_{k-1} + \frac{\rho_{k}}{2}(b_{k}-a_{k}).
\end{align}

Note that if we take $\lambda_{k}=\mu_{k}=1$ and $\alpha_{k}=0$ in \eqref{ref_b}-\eqref{ref_w} for all integer $k\geq1$, then (\ref{ref_b})-(\ref{ref_a}) and (\ref{sping_b})-(\ref{sping_a}) are identical.
Moreover, the remaining calculations, (\ref{ref_gam}), (\ref{ref_z}) and (\ref{ref_w}), can be rewritten into the form (\ref{sping_z+})-(\ref{sping_w+}), as it is established in the next result.

\begin{theorem}
\label{Teo_sping}
Assume that the following condition is satisfied:
\begin{enumerate}
\item[\emph{(B)}]\qquad $\lambda_{k}=\mu_{k}=1$ and $\alpha_{k}=0$ in \eqref{ref_b}-\eqref{ref_w} for every integer $k\geq1$.
\end{enumerate}
Then, the recursions \eqref{ref_b}-\eqref{ref_w} and \eqref{sping_b}-\eqref{sping_w+} are identical. Hence, Spingarn's method is a special case of the PSM.
\end{theorem}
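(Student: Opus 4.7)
The plan is to verify the identification directly by substituting condition (B) into \eqref{ref_b}--\eqref{ref_w} and checking that the resulting four recursions match \eqref{sping_b}--\eqref{sping_w+}. The substitution in \eqref{ref_b} and \eqref{ref_a} is immediate: with $\lambda_k=\mu_k=1$ and $\alpha_k=0$, they read $\eta b_k+x_k = z_{k-1}+\eta w_{k-1}$ and $\eta a_k+y_k = z_{k-1}-\eta w_{k-1}$, which are exactly \eqref{sping_b} and \eqref{sping_a}. So the only real content is to show that under (B) the update rules \eqref{ref_z}--\eqref{ref_w} simplify to \eqref{sping_z+}--\eqref{sping_w+}, and the key to this is the claim that $\gamma_k=1/2$ for every $k$.

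To establish $\gamma_k=1/2$, I would introduce the auxiliary vectors
\[
p_k := b_k-w_{k-1},\qquad q_k := a_k+w_{k-1}.
\]
Rearranging \eqref{sping_b} and \eqref{sping_a} gives $z_{k-1}-x_k=\eta p_k$ and $z_{k-1}-y_k=\eta q_k$. Adding and subtracting these two identities yields
\[
\eta(a_k+b_k)=2z_{k-1}-x_k-y_k=\eta(p_k+q_k),\qquad x_k-y_k=\eta(q_k-p_k).
\]
Plugging into the numerator of \eqref{ref_gam} gives $\eta\|p_k\|^2+\eta\|q_k\|^2$, while the denominator becomes $\eta\|p_k+q_k\|^2+\eta\|q_k-p_k\|^2$, which equals $2\eta(\|p_k\|^2+\|q_k\|^2)$ by the parallelogram identity. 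Taking the ratio gives $\gamma_k=1/2$.

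Once $\gamma_k=1/2$ is in hand, the rest is substitution. The update \eqref{ref_z} becomes
\[
z_k=z_{k-1}-\tfrac{\rho_k}{2}\eta(a_k+b_k)=z_{k-1}-\tfrac{\rho_k}{2}(2z_{k-1}-x_k-y_k)=(1-\rho_k)z_{k-1}+\tfrac{\rho_k}{2}(x_k+y_k),
\]
which is \eqref{sping_z+}. For the $w$ update, note that subtracting \eqref{sping_b} from \eqref{sping_a} yields $\tfrac{1}{\eta}(x_k-y_k)=2w_{k-1}-(b_k-a_k)$; substituting into \eqref{ref_w} with $\gamma_k=1/2$ produces $w_k=(1-\rho_k)w_{k-1}+\tfrac{\rho_k}{2}(b_k-a_k)$, which is \eqref{sping_w+}. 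The main obstacle is simply the bookkeeping required to evaluate $\gamma_k$ cleanly; once the parallelogram identity is invoked via the $(p_k,q_k)$ substitution, everything else is just rewriting.
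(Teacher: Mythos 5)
Your proof is correct. Note, however, that the paper does not actually prove Theorem \ref{Teo_sping} itself: its ``proof'' is a one-line citation to \cite[Subsection 4.2]{eck_sv_08}, so your argument is a self-contained reconstruction of what that reference establishes rather than a variant of an argument in this paper. The substance of your verification --- that under (B) one has $z_{k-1}-x_k=\eta(b_k-w_{k-1})$ and $z_{k-1}-y_k=\eta(a_k+w_{k-1})$, whence the numerator of \eqref{ref_gam} is $\eta\left(\norm{p_k}^2+\norm{q_k}^2\right)$ and the denominator is $\eta\left(\norm{p_k+q_k}^2+\norm{q_k-p_k}^2\right)=2\eta\left(\norm{p_k}^2+\norm{q_k}^2\right)$ by the parallelogram identity, giving $\gamma_k=1/2$ --- is exactly the fact the paper later quotes from \cite{eck_sv_08} in the proof of the subsequent complexity theorem for Spingarn's method, and your remaining substitutions into \eqref{ref_z} and \eqref{ref_w} correctly recover \eqref{sping_z+} and \eqref{sping_w+}. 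The only implicit assumption, that the denominator of \eqref{ref_gam} is nonzero, is harmless since the algorithm stops in step 2 precisely when $\norm{a_k+b_k}+\norm{x_k-y_k}=0$.
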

\begin{proof}
This result was proven in \cite[Subsection 4.2]{eck_sv_08}.
\end{proof}

The following theorem derives the global convergence rate of Spingarn's splitting method in terms of the termination criterion (\ref{stop_criterion}).

\begin{theorem}
Let $\eta>0$ and let $\{(x_{k},b_{k})\}$, $\{(y_{k},a_{k})\}$ and $\{\rho_k\}$ be the sequences generated by Spingarn's splitting method \eqref{sping_b}-\eqref{sping_w+}. 
For every $k\geq1$, define
\begin{equation}
\label{Gam sping}
P_k := \sum_{j=1}^k\rho_j,
\end{equation}
and
\begin{align}
\label{x b eps erg sping}
&\overline{x}_k := \frac{1}{P_k}\sum_{j=1}^k\rho_jx_j,\qquad \overline{b}_k := \frac{1}{P_k}\sum_{j=1}^k\rho_jb_j,\qquad\overline{\epsilon}^x_{k}:=\frac{1}{P_k}\sum_{j=1}^k\rho_j\inpr{x_j-\overline{x}_k}{b_j},\\
\label{y a eps erg sping}
&\overline{y}_k := \frac{1}{P_k}\sum_{j=1}^k\rho_jy_j,\qquad \overline{a}_k := \frac{1}{P_k}\sum_{j=1}^k\rho_ja_j,\qquad\overline{\epsilon}^y_{k}:=\frac{1}{P_k}\sum_{j=1}^k\rho_j\inpr{y_j-\overline{y}_k}{a_j}.
\end{align}
Assume hypothesis \emph{(A.2)} and set $d_{0}:=\dist{(z_{0},\eta w_{0})}{\exset{\eta A}{\eta B}}$. Then, the following statements hold.
\begin{itemize}
\item[(a)] For every integer $k\geq1$ we have
\begin{equation*}
b_k\in B(x_k),\qquad\qquad a_k\in A(y_k),
\end{equation*}
and there exists an index $1\leq i\leq k$ such that 
\begin{equation*}
\norm{a_{i}+b_{i}}\leq \frac{2d_{0}}{\eta\sqrt{k}(1-\overline{\rho})},\qquad\qquad \norm{x_{i}-y_{i}} \leq \frac{2d_{0}}{\sqrt{k}(1-\overline{\rho})}.
\end{equation*}
\item[(b)] For every integer $k\geq1$ we have 
$$\overline{b}_{k}\in B^{\overline{\epsilon}^x_{k}}(\overline{x}_{k}),\qquad\qquad \overline{a}_{k}\in A^{\overline{\epsilon}^y_{k}}(\overline{y}_{k}),$$ 
and
\begin{align*}
&\norm{\overline{a}_{k}+\overline{b}_{k}} \leq \frac{4d_{0}}{\eta k(1-\overline{\rho})},\qquad\qquad\qquad\norm{\overline{x}_{k}-\overline{y}_{k}} \leq  \frac{4d_{0}}{k(1-\overline{\rho})},\\
&\overline{\epsilon}^x_{k} + \overline{\epsilon}^y_{k} \leq \frac{2d_{0}^{2}}{\eta k(1-\overline{\rho})}\left(\frac{2}{(1-\overline{\rho})^2}+4\right).
\end{align*}
\end{itemize} 
\end{theorem}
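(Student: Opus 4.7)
The plan is to exploit Theorem~\ref{Teo_sping}, which identifies Spingarn's recursion \eqref{sping_b}--\eqref{sping_w+} with the scaled PSM \eqref{ref_b}--\eqref{ref_w} in which $\lambda_k=\mu_k=1$ and $\alpha_k=0$ for all $k$, and then to read off the complexity bounds from Theorems~\ref{Lem_compl_it} and~\ref{Teo_compl_erg} applied in the scaled setting. Recall that, as noted in the text following \eqref{ref_w}, when those theorems are transferred to the scaled version the quantities $a_i,b_i,\overline{a}_k,\overline{b}_k,\overline{\epsilon}^x_k,\overline{\epsilon}^y_k$ must be replaced by $\eta a_i,\eta b_i,\eta\overline{a}_k,\eta\overline{b}_k,\eta\overline{\epsilon}^x_k,\eta\overline{\epsilon}^y_k$. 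A preparatory computation is that, for these parameter values, the matrix in Lemma~\ref{Lem_est_phi} reduces to the identity and $\delta_k=1+1=2$, giving the exact values $\theta_k=1$ and $\theta_k/\delta_k=1/2$.

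The key structural observation, which makes the averages \eqref{x b eps erg sping}--\eqref{y a eps erg sping} coincide with those prescribed by Theorem~\ref{Teo_compl_erg}, is that $\gamma_k=1/2$ for every $k$. Indeed, \eqref{ref_b}--\eqref{ref_a} under our parameter values give $z_{k-1}-x_k=\eta(b_k-w_{k-1})$ and $z_{k-1}-y_k=\eta(a_k+w_{k-1})$. Setting $u=\eta(b_k-w_{k-1})$ and $v=\eta(a_k+w_{k-1})$, we have $\eta(a_k+b_k)=u+v$ and $x_k-y_k=v-u$, so direct substitution into \eqref{ref_gam} produces numerator $(\|u\|^2+\|v\|^2)/\eta$ and denominator $(\|u+v\|^2+\|v-u\|^2)/\eta=2(\|u\|^2+\|v\|^2)/\eta$, whence $\gamma_k=1/2$. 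Consequently $\Gamma_k=P_k/2\geq k(1-\overline{\rho})/2$ under assumption~(A.2), and the averages in \eqref{x b eps erg sping}--\eqref{y a eps erg sping} are exactly those appearing in Theorem~\ref{Teo_compl_erg}.

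Part~(a) then follows by applying Theorem~\ref{Lem_compl_it} to the scaled PSM: the inclusions $b_k\in B(x_k)$ and $a_k\in A(y_k)$ are immediate from \eqref{sping_b}--\eqref{sping_a}, while the pointwise bound $\|\eta a_i+\eta b_i\|^2+\|x_i-y_i\|^2\leq d_0^2/\sum_j \rho_j(2-\rho_j)(1/2)^2$, combined with $\rho_j(2-\rho_j)\geq(1-\overline{\rho})^2$ from~(A.2), gives both estimates in~(a). For part~(b), Theorem~\ref{Teo_compl_erg} in the scaled setting yields $\eta\|\overline{a}_k+\overline{b}_k\|\leq 2d_0/\Gamma_k$ and $\|\overline{x}_k-\overline{y}_k\|\leq 2d_0/\Gamma_k$, and the lower bound on $\Gamma_k$ produces the first two inequalities. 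For the last bound, one computes $\varsigma_k=\max_j 1/\bigl((2-\rho_j)\Gamma_k\bigr)\leq 2/(k(1-\overline{\rho})^2)\leq 2/(1-\overline{\rho})^2$ for $k\geq 1$, and substitutes into $\eta(\overline{\epsilon}^x_k+\overline{\epsilon}^y_k)\leq d_0^2(\varsigma_k+4)/\Gamma_k$. The only real bookkeeping difficulty is keeping the factor $\eta$ consistent when translating between the scaled PSM and Spingarn's notation; once $\gamma_k=1/2$ is established, everything else is direct substitution.
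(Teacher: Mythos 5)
Your proposal is correct and follows essentially the same route as the paper: identify Spingarn's recursion with the scaled PSM under $\lambda_k=\mu_k=1$, $\alpha_k=0$ via Theorem~\ref{Teo_sping}, compute $\theta_k=1$, $\delta_k=2$, $\gamma_k=1/2$ (hence $\Gamma_k=P_k/2\geq k(1-\overline{\rho})/2$), and transfer the bounds of Theorems~\ref{Lem_compl_it} and~\ref{Teo_compl_erg} with the $\eta$-substitutions. The only difference is that you verify $\gamma_k=1/2$ by direct computation, where the paper cites the corresponding calculation in \cite[Subsection 4.2]{eck_sv_08}; your bookkeeping of the $\eta$ factors and of $\varsigma_k$ is consistent with the stated constants.
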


\begin{proof}
\item[(a)] By the definitions of $\delta_{k}$ and $\theta_{k}$ in the statement of Lemma \ref{Lem_est_phi} and hypothesis (B), we have that $\delta_{k}=2$ and $\theta_{k}=1$ for every integer $k\geq1$. 

Therefore, since Theorem \ref{Teo_sping} gives that Spingarn's algorithm is a special case of (\ref{ref_b})-(\ref{ref_w}) under assumption (B), the claims in (a) follow from assumption (A.2) and Theorem \ref{Lem_compl_it} applied to (\ref{ref_b})-(\ref{ref_w}) with $\alpha_{k}=0$ and $\lambda_k=\mu_{k}=1$.

\item[(b)] The first assertions in (b) follow from the definitions of $P_k$, $\overline{x}_k$, $\overline{b}_k$, $\overline{\epsilon}^x_{k}$, $\overline{y}_k$, $\overline{a}_k$, $\overline{\epsilon}^y_{k}$ in \eqref{Gam sping}, \eqref{x b eps erg sping} and \eqref{y a eps erg sping},
the inclusions in (\ref{sping_b}), (\ref{sping_a}) and Theorem \ref{Teo Transp For}.

Now, we observe that Theorem \ref{Teo_sping} implies that the sequences $\{(x_k,\eta b_k)\}$ and $\{(y_k, \eta a_k)\}$ can be viewed as generated by the PSM applied to the operators 
$\eta A$ and $\eta B$, with $\lambda_k=\mu_k=1$ and $\alpha_k=0$ for $k=1,2,\dots$.
Moreover, in \cite[Subsection 4.2]{eck_sv_08} it was proven under assumption (B) that $\gamma_k$, given in \eqref{ref_gam}, is equal to $1/2$ for every integer $k\geq1$. Therefore,  the sequences of ergodic iterates associated
with $\{(x_k,\eta b_k)\}$, $\{(y_k, \eta a_k)\}$, $\{\rho_k\}$ and $\{\gamma_k\}$, which are obtained by equations \eqref{def_xy_erg}, \eqref{def_res_erg_x} and \eqref{def_res_erg_y} 
with $\Gamma_k=(1/2)P_k$, are exactly as defined in \eqref{x b eps erg sping} and \eqref{y a eps erg sping}, but with $\eta\overline{b}_k$,
$\eta\overline{\epsilon}^x_{k}$, $\eta\overline{a}_k$ and $\eta\overline{\epsilon}^y_{k}$ instead of $\overline{b}_k$,
$\overline{\epsilon}^x_{k}$, $\overline{a}_k$ and $\overline{\epsilon}^y_{k}$, respectively.

Hence, applying Theorem \ref{Teo_compl_erg} we have 
\begin{align}
\label{compl_erg_sping}
&\norm{\eta(\overline{a}_{k}+\overline{b}_{k})} \leq \frac{2d_{0}}{(1/2)P_{k}}, &\norm{\overline{x}_{k}-\overline{y}_{k}} \leq  \frac{2d_{0}}{(1/2)P_{k}},& &\eta(\overline{\epsilon}^x_{k} + \overline{\epsilon}^y_{k}) \leq \frac{d_{0}^{2}(\varsigma_{k}+4)}{(1/2)P_{k}},
\end{align}
where $d_0$ is the distance of $(z_0,\eta w_0)$ to $S_e(\eta A,\eta B)$ and $\varsigma_{k}=\max\limits_{j=1,\ldots,k}\left\lbrace\dfrac{\mu_{j}}{\theta_{j}(2-\rho_{j})(1/2)P_{k}}\right\rbrace$.

Next, we note that condition (A.2) yields $\rho_j \geq 1-\overline{\rho}$\, for every $j$,
therefore by the definition of $P_k$ we have 
\begin{equation}
\label{spin_gamma}
P_{k}\geq k(1-\overline{\rho}).
\end{equation} 
Furthermore, since in this case $\mu_{j}=1$, $\theta_{j}=1$\, and \,$2-\rho_j\geq 1-\overline{\rho}$\, for all integer\, $j\geq1$, the definition of $\varsigma_k$ and \eqref{spin_gamma} imply
\begin{equation*}
 \varsigma_k\leq \frac{2}{(1-\overline{\rho})^2k}\leq\frac{2}{(1-\overline{\rho})^2},\qquad\qquad\text{for }\,\, k=1,2,\dots.
\end{equation*}
Hence, the remaining claims in (b) follow combining the bounds in \eqref{compl_erg_sping} with the inequality above and \eqref{spin_gamma}.
\end{proof}

\begin{corollary}
Consider the sequences $\{x_k\}$ and $\{y_k\}$ generated by Spingarn's method and the sequences $\{\overline{x}_k\}$ and $\{\overline{y}_k\}$ defined in \eqref{x b eps erg sping} and \eqref{y a eps erg sping}, respectively. Then, the following statements hold.
\begin{itemize}
 \item[(a)] For every $\delta>0$ there exists an index 
 \begin{equation*}
 i = \mathcal{O}\left(\max\left\lbrace\frac{d_{0}^{2}}{\eta^2\delta^{2}},\frac{d_0^2}{\delta^2}\right\rbrace\right)
 \end{equation*}
such that the pair $(x_i,y_i)$ is a $(\delta,0)$-solution of problem \eqref{problem}. 
 \item[(b)] For every $\delta$, $\epsilon>0$ there exists an index
 \begin{equation*}
 k_{0} = \mathcal{O}\left(\max\left\lbrace\frac{d_0}{\eta\delta}, \frac{d_{0}}{\delta},\frac{d_{0}^{2}}{\eta\epsilon} \right\rbrace\right)
 \end{equation*}
 such that, for any $k \geq k_{0}$, the pair $(\overline{x}_{k},\overline{y}_k)$ is a $(\delta,\epsilon)$-solution of problem \eqref{problem}. 
 \end{itemize}
\end{corollary}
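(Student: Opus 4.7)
The plan is to invert the explicit rate estimates established in the preceding theorem and take a maximum over the lower bounds on the iteration count needed to satisfy each condition in the termination criterion \eqref{stop_criterion}. Both parts of the corollary follow by direct substitution; no additional structural property of Spingarn's method is required beyond what the preceding theorem already supplies.

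For part (a), the preceding theorem delivers, after $k$ iterations, an index $1 \leq i \leq k$ with $b_i \in B(x_i)$, $a_i \in A(y_i)$, and the rate bounds $\norm{a_i + b_i} \leq 2d_0/(\eta \sqrt{k}(1-\overline{\rho}))$ and $\norm{x_i - y_i} \leq 2d_0/(\sqrt{k}(1-\overline{\rho}))$. The enlargement tolerances are automatically zero in this case, so the only conditions still to enforce are $\norm{a_i + b_i} \leq \delta$ and $\norm{x_i - y_i} \leq \delta$; solving each for $k$ gives lower bounds of order $d_0^2/(\eta^2 \delta^2)$ and $d_0^2/\delta^2$, and one chooses $i$ as the index produced by the theorem at any $k$ above their maximum.

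For part (b), I would apply part (b) of the preceding theorem analogously. The inclusions $\overline{b}_k \in B^{\overline{\epsilon}^x_k}(\overline{x}_k)$ and $\overline{a}_k \in A^{\overline{\epsilon}^y_k}(\overline{y}_k)$ hold for every $k \geq 1$, and since $\overline{\epsilon}^x_k$, $\overline{\epsilon}^y_k \geq 0$, the estimate on the sum majorizes each term, so requiring $\overline{\epsilon}^x_k + \overline{\epsilon}^y_k \leq \epsilon$ is a sufficient condition for $\max\{\overline{\epsilon}^x_k, \overline{\epsilon}^y_k\} \leq \epsilon$. Inverting the three ergodic bounds of the theorem yields lower bounds on $k$ of respective orders $d_0/(\eta \delta)$, $d_0/\delta$ and $d_0^2/(\eta \epsilon)$, and $k_0$ is any integer above their maximum. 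Because all three right-hand sides decrease monotonically in $k$, the stopping criterion continues to hold for every $k \geq k_0$, which is why part (b) can be phrased uniformly in $k$ rather than at a single index.

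The derivation is essentially mechanical, so I do not anticipate any substantive obstacle; the one minor subtlety worth flagging is precisely the passage from a sum bound to a maximum bound on the enlargement parameters, which is resolved by the non-negativity of $\overline{\epsilon}^x_k$ and $\overline{\epsilon}^y_k$.
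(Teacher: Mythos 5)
Your proposal is correct and is exactly the argument the paper intends: the corollary is stated as an immediate consequence of the preceding theorem, obtained by inverting each of the pointwise and ergodic rate bounds and taking the maximum of the resulting thresholds on $k$. Your handling of the two minor points --- that the pointwise iterates have zero enlargement parameters, and that nonnegativity of $\overline{\epsilon}^x_k$ and $\overline{\epsilon}^y_k$ lets the bound on their sum control their maximum as required by \eqref{stop_criterion} --- matches what the paper's omitted proof would need.
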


\section{A Parallel Inexact Case}
\label{inexact}

The PSM has to solve two proximal subproblems on each iteration, in order to construct decomposable separators.
Since finding the exact solution of subproblems (\ref{sub_prob_b}) and (\ref{sub_prob_a}) could be a challenging task, one might wish to allow approximate evaluations of
the resolvent mappings, while maintaining convergence of the method.

Our main goal in this section is to propose an inexact version of the PSM in the special case of taking $\alpha_k=0$ for all iteration $k$, which possibly allows the subproblems to be performed in parallel. 

It is customary to appeal to the theory of approximation criteria for the PPA and related methods, when attempting to approximate solutions of proximal subproblems.
The first inexact versions of the PPA were introduced in \cite{rock_ppa} by Rockafellar and are based on \emph{absolute summable} error criteria.
For instance, one of the approximation conditions proposed in \cite{rock_ppa} is
\begin{equation*}
\norm{z_{k+1}-(I+\lambda_{k}T)^{-1}(z_{k})}\leq s_{k}, \qquad\qquad\sum_{k=1}^{\infty}s_{k}<\infty.
\end{equation*}
This kind of approximation criteria, which involves a theoretical sequence $\{s_{k}\}\subset\left[0,\infty\right[$ 
such that $\sum_{k=1}^{\infty}s_{k}<\infty$, has as a practical disadvantage that there is no direct guidance as to how to select it when solving a specific problem.
Therefore, it is useful to construct error conditions for approximating
proximal subproblems that could be computable during the progress of the algorithm. 
Inexact versions of the PPA, which use relative error tolerance criteria of this kind, 
were developed in \cite{sol_sv_pp,sol_sv_hpe,sol_sv_unif}. 

To solve subproblems \eqref{sub_prob_b} and \eqref{sub_prob_a} inexactly, we will use the notion of approximate solutions of a proximal subproblem proposed in \cite{sol_sv_unif} by Solodov and Svaiter.

The general projective splitting framework for the sum of $m\geq2$ maximal monotone operators \cite{eck_sv_09} admits a relative error condition for
approximately evaluating resolvents. 
The criterion used in \cite{eck_sv_09} is a generalization for the case
of $m$ maximal monotone operators of the relative error tolerance of the \emph{hybrid proximal extragradient} method \cite{sol_sv_hpe}. 
We have preferred the framework developed in \cite{sol_sv_unif} since it yields a more flexible error tolerance criterion and evaluation of the $\epsilon$-enlargements of the operators.

We now present the notion of inexact solutions of a proximal subproblem introduced in \cite{sol_sv_unif}. Let $T:\mathbb{R}^{n}\rightrightarrows\mathbb{R}^{n}$ be a maximal monotone operator, 
$\lambda>0$ and $z\in\mathbb{R}^{n}$. Consider the \emph{proximal system}
\begin{equation}
\label{proximal_problem}
\left\lbrace\begin{array}{l}
w\in T(z'),\\
\lambda w + z' - z=0,
\end{array}\right.
\end{equation}
which is clearly equivalent to the proximal subproblem \eqref{eq:prox-sub}.

\begin{definition}
\label{Def:relative error}
\emph{
Given $\sigma\in\left[0,1\right[$, a triplet $(z',w,\epsilon)\in\mathbb{E}$ is called a $\sigma$\emph{-approximate solution} of (\ref{proximal_problem}) at $(\lambda,z)$, if 
\begin{equation}
\label{proximal_problem_error}
\begin{split}
& w\in T^{\epsilon}(z'),\\
& \norm{\lambda w+z'-z}^{2} +2\lambda\epsilon\leq\sigma\left(\norm{\lambda w}^{2}+\norm{z'-z}^{2}\right).
\end{split}
\end{equation}}
\end{definition}

We observe that if $(z',w)$ is the exact solution of (\ref{proximal_problem}) then, taking $\epsilon=0$, the triplet $(z',w,\epsilon)$ satisfies the approximation criterion 
(\ref{proximal_problem_error}) for all $\sigma\in\left[0,1\right[$. Conversely, if $\sigma=0$, only the exact solution of (\ref{proximal_problem}), with $\epsilon=0$, will satisfy (\ref{proximal_problem_error}).

The method that will be studied in this section is as follows.

\begin{algorithm}
\label{alg_inex}
Choose $(z_{0},w_{0})\in\R^n\times\R^n$, $\sigma\in\left[0,1\right[$ and $\overline{\rho}\in\left[0,1\right[$. Then, for $k=1,2,\ldots$
\begin{itemize}
\item[1.] Choose $\lambda_{k}$, $\mu_{k}>0$ and calculate $(x_{k},b_{k},\epsilon^x_{k})$ and $(y_{k},a_{k},\epsilon^y_{k})\in\mathbb{E}$ such that  
\begin{equation}
\label{error_criterion:B}
\begin{split}
&\,\,b_{k} \in B^{\epsilon^x_{k}}(x_{k}),\quad\qquad \lambda_{k}(b_{k}-w_{k-1}) = z_{k-1} -x_k + r^x_{k},\\
&\norm{r^x_k}^{2} + 2\lambda_{k}\epsilon^x_{k} \leq \sigma\left(\norm{x_{k}-z_{k-1}}^{2} + \norm{\lambda_{k}(b_{k}-w_{k-1})}^{2}\right),
\end{split}
\end{equation}
and
\begin{equation}
\label{error_criterion:A}
\begin{split}
& \,\, a_{k} \in A^{\epsilon^y_{k}}(y_{k}),\quad\qquad \mu_{k}(a_{k}+w_{k-1})=z_{k-1} - y_k + r^y_{k},\\
& \norm{r^y_ k}^{2} + 2\mu_{k}\epsilon^y_{k} \leq \sigma\left(\norm{y_{k}-z_{k-1}}^{2} + \norm{\mu_{k}(a_{k}+w_{k-1})}^{2}\right).
\end{split}
\end{equation}

\item[2.] If $\norm{a_{k}+b_{k}} + \norm{x_{k}-y_{k}}=0$ stop. Otherwise, set 
$$\gamma_{k}=\frac{\inpr{z_{k-1}-x_{k}}{b_{k}-w_{k-1}} + \inpr{z_{k-1}-y_{k}}{a_{k}+w_{k-1}} - \epsilon^x_{k} - \epsilon^y_{k}}{\norm{a_{k}+b_{k}}^{2} + \norm{x_{k}-y_{k}}^{2}}.$$ 

\item[3.] Choose a parameter $\rho_{k}\in[1-\overline{\rho},1+\overline{\rho}]$ and set
\begin{align*}
z_{k} = &\,\, z_{k-1} - \rho_{k}\gamma_{k}(a_{k}+b_{k}),\\
w_{k} = &\,\, w_{k-1} - \rho_{k}\gamma_{k}(x_{k}-y_{k}).
\end{align*}
\end{itemize}
\end{algorithm}

Note that for all iteration $k$, the triplet $(x_{k},b_{k},\epsilon^x_{k})$ calculated in step 1 of Algorithm \ref{alg_inex} is a $\sigma$-approximate solution of (\ref{proximal_problem}) at $(\lambda_{k},z_{k-1})$,
where $T=B-w_{k-1}$. Similarly, $(y_{k},a_{k},\epsilon^y_{k})$ is a $\sigma$-approximate solution of (\ref{proximal_problem}) (with $T=A+w_{k-1}$) at point $(\mu_{k},z_{k-1})$.
Observe also that taking $\sigma=0$ in Algorithm \ref{alg_inex} yields exactly the PSM with $\alpha_k=0$ for all integer $k\geq1$, since condition \eqref{ass_con} is satisfied.

Let us denote by $\phi_k$ the decomposable separator associated with the pair $(x_{k},b_{k},\epsilon^x_{k})$ and $(y_{k},a_{k},\epsilon^y_{k})$, for every integer $k\geq1$ (see Definition \ref{def_dec_sep_gen}).
It will be shown in the following lemma that if Algorithm \ref{alg_inex} stops in step 2 at iteration $k$, then it has found a point in $S_e(A,B)$. Otherwise we will have $\norm{\nabla\phi_{k}}>0$, which gives
$\phi_{k}(z_{k-1},w_{k-1})>0$. This clearly implies that Algorithm \ref{alg_inex} falls within the general framework of Algorithm \ref{alg_genr_proj}. 

\begin{lemma}
\label{Lem phi inex}
Let $\{(x_{k},b_{k},\epsilon^x_{k})\}$, $\{(y_{k},a_{k},\epsilon^y_{k})\}$, $\{(z_{k},w_{k})\}$, $\{\lambda_k\}$, $\{\mu_k\}$ and $\{\rho_k\}$ be the sequences generated by Algorithm $\ref{alg_inex}$, and 
$\{\phi_k\}$ be the sequence of decomposable separators associated with Algorithm $\ref{alg_inex}$. Then, for every integer $k\geq1$, we have
\begin{equation}
\label{phi_lower_est}
\phi_{k}(z_{k-1},w_{k-1})\geq\frac{1-\sigma}{4}\xi_{k}\left(\norm{a_{k}+b_{k}}^{2}+\norm{x_{k}-y_{k}}^{2}\right)\geq0,
\end{equation}
where 
\begin{equation}
\label{def_xi}
\xi_{k}:=\min\left\lbrace\lambda_{k},\frac{1}{\lambda_{k}},\mu_{k},\frac{1}{\mu_{k}}\right\rbrace.
\end{equation}
If $\norm{\nabla\phi_{k}}>0$, then it follows that $\phi_{k}(z_{k-1},w_{k-1})>0$. Furthermore, $\norm{\nabla\phi_{k}}=0$ if and only if $(x_{k},b_{k})=(y_{k},-a_{k})\in\exset{A}{B}$.
\end{lemma}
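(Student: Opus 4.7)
The plan is to unfold $\phi_k(z_{k-1},w_{k-1})$ using the two defining relations in steps 1 of Algorithm \ref{alg_inex} and then reduce to the error tolerances \eqref{error_criterion:B}--\eqref{error_criterion:A}. First, I would write $q^x_k:=z_{k-1}-x_k$, $p^x_k:=b_k-w_{k-1}$, $q^y_k:=z_{k-1}-y_k$, $p^y_k:=a_k+w_{k-1}$, so that the two relations become $\lambda_k p^x_k=q^x_k+r^x_k$ and $\mu_k p^y_k=q^y_k+r^y_k$. The key is the polarization-like identity (obtained by taking $\norm{\cdot}^2$ of both sides of $\lambda_k p^x_k=q^x_k+r^x_k$ and rearranging):
\begin{equation*}
\inpr{q^x_k}{p^x_k}=\frac{1}{2\lambda_k}\bigl(\norm{q^x_k}^2+\lambda_k^2\norm{p^x_k}^2-\norm{r^x_k}^2\bigr),
\end{equation*}
and the analogous identity with $\mu_k$. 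Substituting these into the definition of $\phi_k(z_{k-1},w_{k-1})$ (Definition \ref{def_dec_sep_gen}), the two inner-product terms become positive quadratic combinations of $q^x_k,p^x_k,q^y_k,p^y_k$ minus $\frac{1}{2\lambda_k}(\norm{r^x_k}^2+2\lambda_k\epsilon^x_k)$ and $\frac{1}{2\mu_k}(\norm{r^y_k}^2+2\mu_k\epsilon^y_k)$, respectively.

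At this point the two error inequalities in \eqref{error_criterion:B} and \eqref{error_criterion:A} plug in directly, bounding these subtracted terms by $\sigma/(2\lambda_k)$ and $\sigma/(2\mu_k)$ times the very same quadratic combinations. After collecting, one obtains
\begin{equation*}
\phi_k(z_{k-1},w_{k-1})\;\ge\;\frac{1-\sigma}{2}\Bigl(\tfrac{1}{\lambda_k}\norm{q^x_k}^2+\lambda_k\norm{p^x_k}^2+\tfrac{1}{\mu_k}\norm{q^y_k}^2+\mu_k\norm{p^y_k}^2\Bigr).
\end{equation*}
Replacing each coefficient by $\xi_k$ (see \eqref{def_xi}) gives a bound by $\frac{(1-\sigma)\xi_k}{2}(\norm{q^x_k}^2+\norm{p^x_k}^2+\norm{q^y_k}^2+\norm{p^y_k}^2)$. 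Finally, from $a_k+b_k=p^y_k+p^x_k$ and $x_k-y_k=(-q^x_k)+q^y_k$, the elementary bound $\norm{u+v}^2\le 2(\norm{u}^2+\norm{v}^2)$ yields $\norm{a_k+b_k}^2+\norm{x_k-y_k}^2\le 2(\norm{p^x_k}^2+\norm{p^y_k}^2+\norm{q^x_k}^2+\norm{q^y_k}^2)$, which produces the factor $1/4$ and establishes \eqref{phi_lower_est}; nonnegativity is automatic since $\sigma<1$ and $\xi_k>0$.

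The positivity assertion follows at once, since $\norm{\nabla\phi_k}^2=\norm{a_k+b_k}^2+\norm{x_k-y_k}^2$, so $\norm{\nabla\phi_k}>0$ makes the right-hand side of \eqref{phi_lower_est} strictly positive. For the equivalence, one direction is trivial: if $(x_k,b_k)=(y_k,-a_k)$, both components of $\nabla\phi_k$ vanish. For the converse, assume $\norm{\nabla\phi_k}=0$, i.e., $y_k=x_k$ and $a_k=-b_k$. Substituting this into the original formula for $\phi_k(z_{k-1},w_{k-1})$ makes the two inner products cancel exactly, leaving $\phi_k(z_{k-1},w_{k-1})=-\epsilon^x_k-\epsilon^y_k$. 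Combined with \eqref{phi_lower_est} (which gives $\phi_k(z_{k-1},w_{k-1})\ge 0$) and the nonnegativity of $\epsilon^x_k,\epsilon^y_k$, this forces $\epsilon^x_k=\epsilon^y_k=0$. By Proposition \ref{Prop enlargement}(c) (maximal monotonicity), $b_k\in B(x_k)$ and $a_k\in A(y_k)=A(x_k)$, and since $a_k=-b_k$, we conclude $(x_k,b_k)\in S_e(A,B)$.

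The main obstacle is the algebraic bookkeeping around the polarization identity and verifying that the chosen grouping yields exactly the coefficient $(1-\sigma)/4$ rather than something smaller; the last equivalence is delicate only in that one must observe that the inner products in $\phi_k(z_{k-1},w_{k-1})$ collapse to $-\epsilon^x_k-\epsilon^y_k$ so that \eqref{phi_lower_est} can be used to pin down the $\epsilon$'s.
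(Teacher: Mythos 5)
Your proposal is correct and follows essentially the same route as the paper's proof: the polarization identity applied to the residual relations, substitution of the error criteria to get the factor $1-\sigma$, replacement of the coefficients $\lambda_k,1/\lambda_k,\mu_k,1/\mu_k$ by $\xi_k$ (the paper phrases this as the smallest eigenvalue of a diagonal quadratic form), the bound $\norm{u+v}^2\le 2(\norm{u}^2+\norm{v}^2)$ to produce the factor $1/4$, and the same collapse of $\phi_k(z_{k-1},w_{k-1})$ to $-\epsilon^x_k-\epsilon^y_k$ when $\nabla\phi_k=0$ to force $\epsilon^x_k=\epsilon^y_k=0$.
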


\begin{proof}
From the definition of $\phi_{k}$ and direct calculations it follows that
\begin{equation*}
\begin{split}
\phi_{k}(z_{k-1},w_{k-1}) = & \,\,\inpr{z_{k-1}-x_{k}}{b_{k}-w_{k-1}} + \inpr{z_{k-1}-y_{k}}{a_{k}+w_{k-1}} - \epsilon^x_{k} - \epsilon^y_{k}\\
= & \,\,\frac{1}{2\lambda_{k}}\left(\norm{z_{k-1}-x_{k}}^{2} + \norm{\lambda_{k}(b_{k}-w_{k-1})}^{2} - \norm{r^x_{k}}^{2} - 2\lambda_{k}\epsilon^x_{k}\right)\\
& \,\, + \frac{1}{2\mu_{k}}\left(\norm{z_{k-1}-y_{k}}^{2} + \norm{\mu_{k}(a_{k}+w_{k-1})}^{2} - \norm{r^y_{k}}^{2} - 2\mu_{k}\epsilon^y_{k}\right).\\
\end{split}
\end{equation*}
The identity above, together with the error criteria (\ref{error_criterion:B}) and (\ref{error_criterion:A}), implies 
\begin{equation}
\label{lower_est_phi}
\begin{split}
\phi_{k}(z_{k-1},w_{k-1})\geq &\, \frac{1-\sigma}{2\lambda_{k}}\left(\norm{z_{k-1}-x_{k}}^{2} + \norm{\lambda_{k}(b_{k}-w_{k-1})}^{2}\right)\\
&\, + \frac{1-\sigma}{2\mu_{k}}\left(\norm{z_{k-1}-y_{k}}^{2} + \norm{\mu_{k}(a_{k}+w_{k-1})}^{2}\right).
\end{split}
\end{equation}
If\,\, we \,\,interpret\,\, this\,\, last\,\, expression\,\, as\,\, a\,\, quadratic\,\, form\,\, applied \,\,to\,\, the\,\, $\mathbb{R}^{4}$\,\, vector\\
$(\norm{z_{k-1}-x_{k}},\norm{b_{k}-w_{k-1}},\norm{z_{k-1}-y_{k}},\norm{a_{k}+w_{k-1}})^{T}$, we obtain 
\begin{equation}
\label{matrix_4}
\begin{split}
\phi_{k}(z_{k-1},w_{k-1}) \geq &\,\frac{1-\sigma}{2}\left(\begin{array}{c}
\norm{z_{k-1}-x_{k}}\\
\norm{w_{k-1}-b_{k}}\\
\norm{z_{k-1}-y_{k}}\\
\norm{w_{k-1}+a_{k}}
\end{array}\right)^{T}
\left(\begin{array}{cccc}
\frac{1}{\lambda_{k}}& 0& 0& 0\\
0 &\lambda_{k}& 0 &0\\
0 &0& \frac{1}{\mu_{k}}& 0\\
0 &0 &0 &\mu_{k}
\end{array}\right)
\left(\begin{array}{c}
\norm{z_{k-1}-x_{k}}\\
\norm{w_{k-1}-b_{k}}\\
\norm{z_{k-1}-y_{k}}\\
\norm{w_{k-1}+a_{k}}
\end{array}\right)\\
\geq &\, \frac{1-\sigma}{2}\xi_{k}\left(\norm{z_{k-1}-x_{k}}^{2}+\norm{b_{k}-w_{k-1}}^{2}+\norm{z_{k-1}-y_{k}}^{2}+\norm{a_{k}+w_{k-1}}^{2}\right),
\end{split}
\end{equation}
where $\xi_{k}$, defined in (\ref{def_xi}), is the smallest eigenvalue of the matrix in (\ref{matrix_4}).

Now, we combine the second inequality in (\ref{matrix_4}) with relations 
$$\norm{z_{k-1}-x_{k}}^{2}+\norm{z_{k-1}-y_{k}}^{2}\geq\frac{1}{2}\norm{x_{k}-y_{k}}^{2},$$ $$\norm{b_{k}-w_{k-1}}^{2}+\norm{a_{k}+w_{k-1}}^{2}\geq\frac{1}{2}\norm{a_{k}+b_{k}}^{2};$$
to obtain the first inequality in (\ref{phi_lower_est}). Since $\xi_{k}>0$ and $\sigma\in\left[0,1\right[$, the second inequality in \eqref{phi_lower_est} follows directly. Furthermore, these last relations, together with equation (\ref{phi_lower_est}), clearly imply that $\phi_{k}(z_{k-1},w_{k-1})>0$ whenever $\norm{\nabla\phi_{k}}>0$.

To prove the last assertion of
the lemma we rewrite $\phi_{k}(z_{k-1},w_{k-1})$ as
\begin{equation*}
\phi_{k}(z_{k-1},w_{k-1}) = \inpr{a_{k}+b_{k}}{z_{k-1}-y_{k}} + \inpr{x_{k}-y_{k}}{w_{k-1}-b_{k}} - \epsilon^x_{k} - \epsilon^y_{k}.
\end{equation*}
Then, if $\norm{\nabla\phi_{k}}=0$, it follows that $x_{k}=y_{k}$, $b_{k}=-a_{k}$ and 
$$\phi_{k}(z_{k-1},w_{k-1})= - \epsilon^x_{k} - \epsilon^y_{k}.$$ 
From equation (\ref{phi_lower_est}), the equality above and the fact that $\epsilon^x_{k},\,\epsilon^y_{k}\geq0$, we obtain $\epsilon^x_{k}=\epsilon^y_{k}=0$. Hence,
$b_{k}\in B(x_{k})$, $a_{k}\in A(y_{k})$ and we conclude that $(x_{k},b_{k})=(y_{k},-a_{k})\in\exset{A}{B}$.
\end{proof}

For deriving complexity bounds for Algorithm \ref{alg_inex} we will assume, as was done in the preceding section,
that the method does not stop in a finite number of iterations. Thus, from now on we suppose that $\norm{\nabla\phi_{k}}>0$ for all integer $k\geq1$.

Next result establishes pointwise complexity bounds for Algorithm \ref{alg_inex}.

\begin{theorem}
\label{Teo ite comp inex}
Take $(z_0,w_0)\in\R^n\times\R^n$ and let  $\{(x_{k},b_{k},\epsilon^x_{k})\}$, $\{(y_{k},a_{k},\epsilon^y_{k})\}$, $\{\lambda_k\}$, $\{\mu_k\}$, $\{\gamma_k\}$ and $\{\rho_k\}$ be the sequences generated by Algorithm $\ref{alg_inex}$. 
Let $d_{0}$ be the distance of $(z_{0},w_{0})$ to the set $\exset{A}{B}$ and, for all integer $k\geq1$, define $\xi_k$ by \eqref{def_xi}. Then, for every integer $k\geq1$, we have 
\begin{equation}
\label{inclusions it parallel}
b_k\in B^{\epsilon^x_{k}}(x_k),\qquad\qquad a_k\in A^{\epsilon^y_{k}}(y_k),
\end{equation}
and there exists an index $1\leq i\leq k$ such that 
\begin{align*}
\norm{a_{i}+b_{i}}^{2} + \norm{x_{i}-y_{i}}^{2} \leq & \,\frac{16d_{0}^{2}}{(1-\sigma)^{2}(1-\overline{\rho})^{2}\xi_{i}\sum\limits_{j=1}^{k}\xi_{j}},\\
\epsilon^x_{i} + \epsilon^y_{i} \leq &\, \frac{4\sigma d_{0}^{2}}{(1-\sigma)^{2}(1-\overline{\rho})^{2}\sum\limits_{j=1}^{k}\xi_{j}}.
\end{align*}
\end{theorem}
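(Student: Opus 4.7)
My plan is to exploit two facts: first, that Algorithm \ref{alg_inex} is an instance of the general scheme in Algorithm \ref{alg_genr_proj}, so Theorem \ref{Prop general proj} supplies the master inequality $\sum_{j=1}^k \rho_j(2-\rho_j)\gamma_j^2\norm{\nabla\phi_j}^2 \leq d_0^2$; and second, that Lemma \ref{Lem phi inex} controls $\phi_j(z_{j-1},w_{j-1})$ in terms of the very quantities I need to bound. The inclusions \eqref{inclusions it parallel} are immediate from step 1 of the algorithm, so only the two scalar estimates require work.

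The main intermediate step is to convert the master inequality into a bound of the form $\sum_{j=1}^k \xi_j\,\phi_j(z_{j-1},w_{j-1}) \leq C d_0^2$. Since the definition of $\gamma_k$ in step 2 gives $\gamma_k\norm{\nabla\phi_k}^2 = \phi_k(z_{k-1},w_{k-1})$, Lemma \ref{Lem phi inex} yields $\gamma_k \geq \tfrac{1-\sigma}{4}\xi_k$ and hence
\begin{equation*}
\gamma_k^2\norm{\nabla\phi_k}^2 \;=\; \gamma_k\,\phi_k(z_{k-1},w_{k-1}) \;\geq\; \frac{1-\sigma}{4}\,\xi_k\,\phi_k(z_{k-1},w_{k-1}).
\end{equation*}
Combining this with $\rho_k(2-\rho_k)\geq (1-\overline{\rho})^2$ (forced by step 3) and then summing, Theorem \ref{Prop general proj} gives
\begin{equation*}
\sum_{j=1}^k \xi_j\,\phi_j(z_{j-1},w_{j-1}) \;\leq\; \frac{4 d_0^2}{(1-\sigma)(1-\overline{\rho})^2}.
\end{equation*}
Separately, revisiting the exact identity for $\phi_k(z_{k-1},w_{k-1})$ that appears in the proof of Lemma \ref{Lem phi inex} together with the tolerance bounds in \eqref{error_criterion:B}--\eqref{error_criterion:A} produces the $\epsilon$-to-$\phi$ link
\begin{equation*}
\epsilon^x_k + \epsilon^y_k \;\leq\; \frac{\sigma}{1-\sigma}\,\phi_k(z_{k-1},w_{k-1}),
\end{equation*}
since each of the two tolerances is bounded by $\tfrac{\sigma}{1-\sigma}$ times the corresponding half of the lower estimate \eqref{lower_est_phi}.

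With these ingredients in hand, I would select $i\in\arg\min_{j=1,\ldots,k}\phi_j(z_{j-1},w_{j-1})$. Minimality yields $\phi_i(z_{i-1},w_{i-1})\sum_{j=1}^k \xi_j \leq \sum_{j=1}^k \xi_j\,\phi_j(z_{j-1},w_{j-1})$, and both target bounds then fall out: the first by applying the inequality $\phi_i(z_{i-1},w_{i-1}) \geq \tfrac{1-\sigma}{4}\xi_i(\norm{a_i+b_i}^2+\norm{x_i-y_i}^2)$ from Lemma \ref{Lem phi inex}, and the second by substituting at $k=i$ in the $\epsilon$-to-$\phi$ link. The subtlety I expect to be the main obstacle is precisely this last choice of $i$: one needs a single index that controls both $\norm{a_i+b_i}^2+\norm{x_i-y_i}^2$ and $\epsilon^x_i+\epsilon^y_i$, and the trick is to minimize $\phi_j(z_{j-1},w_{j-1})$ rather than either target quantity individually, because $\phi_j$ dominates both of them simultaneously.
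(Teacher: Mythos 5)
Your proof is correct and follows essentially the same route as the paper's: both rest on the master inequality of Theorem \ref{Prop general proj}, the bound $\gamma_k \geq \tfrac{1-\sigma}{4}\xi_k$ from Lemma \ref{Lem phi inex}, and the link $\epsilon^x_k+\epsilon^y_k \leq \tfrac{\sigma}{1-\sigma}\phi_k(z_{k-1},w_{k-1})$, and they yield identical constants. The only difference is the index-selection rule: the paper minimizes the auxiliary quantity $\psi_j:=\max\bigl\{\bigl(\tfrac{1-\sigma}{4}\bigr)^{2}\xi_{j}\norm{\nabla\phi_{j}}^{2},\,\tfrac{(1-\sigma)^{2}}{4\sigma}(\epsilon^x_{j}+\epsilon^y_{j})\bigr\}$ over $j$, whereas you minimize $\phi_j(z_{j-1},w_{j-1})$ directly --- a cosmetic variation, since $\phi_j$ dominates both components of $\psi_j$ with the same constants.
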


\begin{proof}
The inclusions in \eqref{inclusions it parallel} are due to step 1 of Algorithm \ref{alg_inex}. Since $\gamma_{k}=\dfrac{\phi_{k}(z_{k-1},w_{k-1})}{\norm{\nabla\phi_{k}}^{2}}$, using (\ref{phi_lower_est}) we have 
\begin{equation}
\label{gamm lower est}
\gamma_{k} \geq \frac{1-\sigma}{4}\xi_{k}\quad \qquad\text{for } k=1,2,\dots.
\end{equation}
Thus, squaring both sides of (\ref{gamm lower est}) and multiplying by $\norm{\nabla\phi_{k}}^{2}$ we obtain
\begin{equation}
\label{lower est gm nb}
\gamma_{k}^{2}\norm{\nabla\phi_{k}}^{2} \geq \left(\frac{1-\sigma}{4}\right)^{2}\xi_{k}^{2}\norm{\nabla\phi_{k}}^{2}.
\end{equation}
Now, we observe that the error criteria (\ref{error_criterion:B}) and (\ref{error_criterion:A}) imply
\begin{equation*}
\epsilon^x_{k} \leq \frac{\sigma}{2\lambda_{k}}\left(\norm{z_{k-1}-x_{k}}^{2} + \norm{\lambda_{k}(b_{k}-w_{k-1})}^{2}\right)
\end{equation*}
and
\begin{equation*}
\epsilon^y_{k} \leq \frac{\sigma}{2\mu_{k}}\left(\norm{z_{k-1}-y_{k}}^{2} + \norm{\mu_{k}(a_{k}+w_{k-1})}^{2}\right),
\end{equation*}
respectively. 
Adding these two inequalities and combining with relation (\ref{lower_est_phi}) we obtain 
\begin{equation*}
\begin{split}
\epsilon^x_{k} + \epsilon^y_{k} \leq  \frac{\sigma}{1-\sigma}\phi_{k}(z_{k-1},w_{k-1})= \frac{\sigma}{1-\sigma}\gamma_{k}\norm{\nabla\phi_{k}}^{2}.
\end{split}
\end{equation*}
Multiplying the latter inequality by $\gamma_{k}$, using (\ref{gamm lower est}) and multiplying both sides of the resulting expression by $\dfrac{1-\sigma}{\sigma}$, we have
\begin{equation}
\label{upper_est:epsilon_k}
\frac{(1-\sigma)^{2}}{4\sigma}\xi_{k}(\epsilon^x_{k} + \epsilon^y_{k}) \leq \gamma_{k}^{2}\norm{\nabla\phi_{k}}^{2}\qquad\quad \textnormal{for }\, k=1,2,\ldots.
\end{equation}

Now, we define
\begin{equation*}
\psi_{k} := \max\left\lbrace\left(\frac{1-\sigma}{4}\right)^{2}\xi_{k}\norm{\nabla\phi_{k}}^{2},\frac{(1-\sigma)^{2}}{4\sigma}\left(\epsilon^x_{k}+\epsilon^y_{k}\right)\right\rbrace,
\end{equation*}
and combine (\ref{lower est gm nb}) with (\ref{upper_est:epsilon_k}) to obtain
\begin{equation*}
\xi_{k}\psi_{k} \leq \gamma_{k}^{2}\norm{\nabla\phi_{k}}^{2} \qquad\qquad \textnormal{for }\, k=1,2,\ldots.
\end{equation*}
Next, adding the inequality above from $j=1$ to $k$, using the assumption that $\rho_{k}\in[1-\overline{\rho},1+\overline{\rho}]$ for all integer $k\geq1$ and the first inequality in (\ref{dist_est}), we have
\begin{equation*}
\sum_{j=1}^{k}\xi_{j}\psi_{j}  \leq \frac{d_{0}^{2}}{(1-\overline{\rho})^{2}},
\end{equation*}
 and consequently
\begin{equation*}
\left(\min_{j=1,\ldots,k}\{\psi_{j}\}\right)\sum_{j=1}^{k}\xi_{j} \leq \frac{d_{0}^{2}}{(1-\overline{\rho})^{2}}.
\end{equation*}
The theorem now follows from this last inequality and the definition of $\psi_{k}$.
\end{proof}

If \,$\{(x_{k},b_{k},\epsilon^x_{k})\}$, $\{(y_{k},a_{k},\epsilon^y_{k})\}$, $\{\gamma_k\}$ and $\{\rho_k\}$ are the sequences generated by Algorithm \ref{alg_inex},
we consider their associated sequences of ergodic iterates $\{(\overline{x}_{k},\overline{b}_{k},\overline{\epsilon}^x_{k})\}$ and $\{(\overline{y}_{k},\overline{a}_{k},\overline{\epsilon}^y_{k})\}$, defined as in (\ref{def_xy_erg}),
(\ref{def_res_erg_x}) and (\ref{def_res_erg_y}). Since Algorithm \ref{alg_inex} is a special instance of Algorithm \ref{alg_genr_proj}, the results of subsection \ref{ergodic iterates} hold for its ergodic sequences. 
Thus, combining Theorem \ref{Teo erg bound} and Lemma \ref{Lem phi inex} we can derive ergodic complexity estimates for the method.

\begin{theorem}
\label{Teo_compl_erg_inex}
Let $\{(x_{k},b_{k},\epsilon^x_{k})\}$, $\{(y_{k},a_{k},\epsilon^y_{k})\}$, $\{\gamma_k\}$ and $\{\rho_k\}$ be the sequences generated by Algorithm $\ref{alg_inex}$. Let 
$\{(\overline{x}_{k},\overline{b}_{k},\overline{\epsilon}^x_{k})\}$ and $\{(\overline{y}_{k},\overline{a}_{k},\overline{\epsilon}^y_{k})\}$ be the sequences of ergodic iterates associated with 
Algorithm $\ref{alg_inex}$, defined as in \eqref{def_xy_erg}-\eqref{def_res_erg_y}, and consider $\xi_k$ given by \eqref{def_xi}. Then, for all integer $k\geq1$, we have
\begin{align}
\label{inclusion inex}
\overline{b}_k\in B^{\overline{\epsilon}^x_{k}}(\overline{x}_k),\qquad\qquad \overline{a}_k\in A^{\overline{\epsilon}^y_{k}}(\overline{y}_k)
\end{align}
and
\begin{align}
\label{bounds erg inex}
&\norm{\overline{a}_{k}+\overline{b}_{k}} \leq \frac{2d_{0}}{\Gamma_{k}}, &\norm{\overline{x}_{k}-\overline{y}_{k}} \leq  \frac{2d_{0}}{\Gamma_{k}},& &\overline{\epsilon}^x_{k} + \overline{\epsilon}^y_{k} \leq \frac{d_{0}^{2}(\varphi_{k}+4)}{\Gamma_{k}},
\end{align}
where $d_{0}$ is the distance of $(z_{0},w_{0})$ to $\exset{A}{B}$ and 
$$\varphi_{k}:=\left(\frac{2}{1-\sigma}\right)\max_{j=1,\ldots,k}\left\lbrace\frac{1}{\xi_{j}(2-\rho_{j})\Gamma_{k}}\right\rbrace.$$
\end{theorem}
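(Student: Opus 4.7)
The plan is to recognize that Algorithm \ref{alg_inex} is a concrete instance of the general relaxed projection scheme in Algorithm \ref{alg_genr_proj}, so every tool from subsection \ref{ergodic iterates} is available off the shelf. Consequently, the inclusions \eqref{inclusion inex} are an immediate consequence of Lemma \ref{Lem_residual_erg} applied to the triplets $(x_j,b_j,\epsilon^x_j)$ and $(y_j,a_j,\epsilon^y_j)$ produced in step 1, while the first two inequalities in \eqref{bounds erg inex} follow verbatim from \eqref{compl_erg_ax} in Theorem \ref{Teo erg bound}. The real work is to specialize the general ergodic residual bound \eqref{compl_erg_ep} under the inexact error criterion, so as to get the claimed constant $\varphi_k$ on the right-hand side.

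For the bound on $\overline{\epsilon}^x_k+\overline{\epsilon}^y_k$, my starting point is \eqref{compl_erg_ep}, which tells me that it suffices to estimate $\tfrac{1}{\Gamma_k}\sum_{j=1}^{k}\rho_j\gamma_j\|(y_j,b_j)-(z_{j-1},w_{j-1})\|^2$. For this, I would go back to the matrix-form lower bound derived inside the proof of Lemma \ref{Lem phi inex} (inequality \eqref{matrix_4}), which reads
\begin{equation*}
\phi_j(z_{j-1},w_{j-1})\ \geq\ \tfrac{1-\sigma}{2}\xi_j\bigl(\|z_{j-1}-x_j\|^2+\|w_{j-1}-b_j\|^2+\|z_{j-1}-y_j\|^2+\|w_{j-1}+a_j\|^2\bigr).
\end{equation*}
Dropping two of the four nonnegative terms on the right, this yields
\begin{equation*}
\|(y_j,b_j)-(z_{j-1},w_{j-1})\|^2\ \leq\ \tfrac{2}{(1-\sigma)\xi_j}\,\phi_j(z_{j-1},w_{j-1})\ =\ \tfrac{2}{(1-\sigma)\xi_j}\,\gamma_j\|\nabla\phi_j\|^2,
\end{equation*}
where the equality uses the definition of $\gamma_j$ from step 2 (note $\gamma_j>0$ by Lemma \ref{Lem phi inex}).

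Multiplying the previous display by $\rho_j\gamma_j$, summing from $j=1$ to $k$, and then dividing by $\Gamma_k$, I would introduce the factor $\tfrac{2-\rho_j}{2-\rho_j}$ in order to rewrite each summand as $\rho_j(2-\rho_j)\gamma_j^2\|\nabla\phi_j\|^2$ multiplied by $\tfrac{1}{\xi_j(2-\rho_j)}$; pulling out the worst such reciprocal as a maximum and applying the first estimate in \eqref{dist_est} gives
\begin{equation*}
\tfrac{1}{\Gamma_k}\sum_{j=1}^{k}\rho_j\gamma_j\|(y_j,b_j)-(z_{j-1},w_{j-1})\|^2\ \leq\ \tfrac{2d_0^2}{(1-\sigma)}\ \max_{j=1,\ldots,k}\tfrac{1}{\xi_j(2-\rho_j)}.
\end{equation*}
Substituting this into \eqref{compl_erg_ep} produces exactly the term $d_0^2\varphi_k/\Gamma_k$ plus the tail $4d_0^2/\Gamma_k$ appearing in the theorem.

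The step I would expect to be the main obstacle is the one that links the inexact criterion to the ergodic estimate, namely identifying the correct "four-term" quadratic lower bound on $\phi_j(z_{j-1},w_{j-1})$ already present in the proof of Lemma \ref{Lem phi inex}: the earlier pointwise statement \eqref{phi_lower_est} merges two of those terms and is too weak for this purpose, so one must resist the temptation to quote the packaged estimate and instead use the sharper intermediate inequality \eqref{matrix_4}. Once that is in hand, the remainder of the argument is a routine bookkeeping exercise following the pattern of Theorem \ref{Teo_compl_erg}.
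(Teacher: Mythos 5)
Your proposal is correct and follows essentially the same route as the paper's proof: inclusions from Lemma \ref{Lem_residual_erg}, the first two bounds from Theorem \ref{Teo erg bound}, and the $\varepsilon$-bound obtained by feeding the four-term quadratic estimate \eqref{matrix_4} (with two terms dropped) into \eqref{compl_erg_ep} via the identity $\phi_j(z_{j-1},w_{j-1})=\gamma_j\norm{\nabla\phi_j}^2$ and the telescoped bound \eqref{dist_est}. The only blemish is a bookkeeping slip in your final display, whose right-hand side should carry an extra factor $1/\Gamma_k$ (i.e.\ it should equal $\varphi_k d_0^2$, consistent with the fact that $\Gamma_k$ appears inside the definition of $\varphi_k$); your concluding sentence states the correct result, so this is purely typographical.
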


\begin{proof}
The inclusions in \eqref{inclusion inex} follow from Lemma \ref{Lem_residual_erg}. The first two bounds in (\ref{bounds erg inex}) are due to \eqref{compl_erg_ax} in Theorem \ref{Teo erg bound}. 

Now, we note that the second inequality in (\ref{matrix_4}) implies
\begin{equation*}
\left(\norm{z_{j-1}-y_{j}}^{2}+\norm{b_{j}-w_{j-1}}^{2}\right)\frac{1-\sigma}{2}\xi_{j}\leq \phi_{j}(z_{j-1},w_{j-1})\qquad \text{for } j=1,2,\dots.
\end{equation*}
The relation above, together with the definition of $\gamma_{j}$, yields
\begin{equation*}
\norm{z_{j-1}-y_{j}}^{2}+\norm{b_{j}-w_{j-1}}^{2}\leq \frac{2}{(1-\sigma)\xi_j}\gamma_j\norm{\nabla\phi_j}^2\qquad \text{for } j=1,2,\dots.
\end{equation*}
Multiplying the above inequality by $\dfrac{1}{\Gamma_k}\rho_j\gamma_j$ and adding from $j=1$ to $k$, we obtain
\begin{equation*}
\begin{split}
\frac{1}{\Gamma_{k}}\sum_{j=1}^{k}\rho_{j}\gamma_{j}\norm{(y_{j},b_{j})-(z_{j-1},w_{j-1})}^{2} \leq &\, \frac{1}{\Gamma_{k}}\sum_{j=1}^{k}\frac{2}{(1-\sigma)\xi_{j}}\rho_{j}\gamma_{j}^{2}\norm{\nabla\phi_{j}}^{2}\\
= &\, \frac{1}{\Gamma_{k}}\sum_{j=1}^{k}\frac{2}{(1-\sigma)\xi_{j}(2-\rho_j)}\rho_{j}(2-\rho_j)\gamma_{j}^{2}\norm{\nabla\phi_{j}}^{2}\\ 
\leq &\, \varphi_{k}\sum_{j=1}^{k}\rho_{j}(2-\rho_{j})\gamma_{j}^{2}\norm{\nabla\phi_{j}}^{2}\\
\leq & \,\varphi_{k}d_{0}^{2},
\end{split}
\end{equation*}
where the second and the third inequalities are due to the definition of $\varphi_{k}$ and the first bound in (\ref{dist_est}), respectively. 
Substituting equation above into (\ref{compl_erg_ep}) we obtain the last bound in \eqref{bounds erg inex}.
\end{proof}

Theorems \ref{Teo ite comp inex} and \ref{Teo_compl_erg_inex} provide general complexity results for Algorithm \ref{alg_inex}.
Observe that the derived bounds are expressed in terms of $\xi_{k}$ and $\Gamma_{k}$. Next result, which is a direct consequence of these theorems, presents iteration-complexity
bounds for Algorithm \ref{alg_inex} to obtain $(\delta,\epsilon)$-approximate solutions of problem \eqref{problem}.

 \begin{theorem}
 \label{Teo approx solution}
 Assume the hypotheses of Theorem $\ref{Teo_compl_erg_inex}$. Assume also condition \emph{(A.1)} and
 define $\xi:=\min\left\lbrace\underline{\lambda},\dfrac{1}{\overline{\lambda}}\right\rbrace$. Then, for all $\delta,\,\epsilon>0$, the following statements hold.
 \begin{itemize}
 \item[(a)] There exists an index 
 \begin{equation*}
 i = \mathcal{O}\left(\max\left\lbrace\frac{d_{0}^{2}}{\xi^{2}\delta^{2}},\frac{d_{0}^{2}}{\xi\epsilon}\right\rbrace\right)
 \end{equation*}
such that the iterate $(x_i,y_i)$ is a $(\delta,\epsilon)$-solution of problem \eqref{problem}. 
 \item[(b)] There exists an index
 \begin{equation*}
 k_{0} = \mathcal{O}\left(\max\left\lbrace \frac{d_{0}}{\xi\delta},\frac{d_{0}^{2}}{\xi\epsilon} \right\rbrace\right)
 \end{equation*}
 such that, for any $k \geq k_{0}$, the ergodic iterate $(\overline{x}_{k},\overline{y}_k)$ is a $(\delta,\epsilon)$-solution of problem \eqref{problem}. 
 \end{itemize}
 \end{theorem}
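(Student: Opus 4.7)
The plan is to treat this theorem as a direct corollary of the two complexity estimates already established, namely Theorem \ref{Teo ite comp inex} (pointwise) and Theorem \ref{Teo_compl_erg_inex} (ergodic). The single new ingredient is to extract from assumption (A.1) a uniform lower bound on the quantities $\xi_k$ and $\Gamma_k$ that appear in those bounds. Concretely, (A.1) gives $\lambda_k,\mu_k \in [\underline\lambda,\overline\lambda]$, hence by the definition \eqref{def_xi} of $\xi_k$ one has $\xi_k \ge \xi := \min\{\underline\lambda,1/\overline\lambda\} > 0$ for every $k\ge 1$. In particular $\sum_{j=1}^k \xi_j \ge k\xi$.

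For part (a), I would substitute $\xi_i \ge \xi$ and $\sum_{j=1}^k\xi_j \ge k\xi$ into the two bounds given by Theorem \ref{Teo ite comp inex}. This yields
\begin{equation*}
\|a_i+b_i\|^2+\|x_i-y_i\|^2 \;\le\; \frac{16\,d_0^2}{(1-\sigma)^2(1-\overline\rho)^2\,\xi^2\,k}, \qquad \epsilon^x_i+\epsilon^y_i \;\le\; \frac{4\sigma\,d_0^2}{(1-\sigma)^2(1-\overline\rho)^2\,\xi\,k}.
\end{equation*}
To enforce $\max\{\|a_i+b_i\|,\|x_i-y_i\|\}\le\delta$ and $\max\{\epsilon^x_i,\epsilon^y_i\}\le\epsilon$, it suffices that the left-hand sides above be bounded respectively by $\delta^2$ and $\epsilon$; this holds once $k$ exceeds the maximum of the two expressions $d_0^2/(\xi^2\delta^2)$ and $d_0^2/(\xi\epsilon)$ (up to constants depending only on $\sigma$ and $\overline\rho$), which gives the claimed order for the index $i$.

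For part (b), I would first obtain a linear lower bound $\Gamma_k = \Omega(\xi k)$. From the proof of Theorem \ref{Teo ite comp inex} we already have $\gamma_j \ge \tfrac{1-\sigma}{4}\xi_j \ge \tfrac{1-\sigma}{4}\xi$, and combined with $\rho_j \ge 1-\overline\rho$ this yields
\begin{equation*}
\Gamma_k \;=\; \sum_{j=1}^{k}\rho_j\gamma_j \;\ge\; \frac{(1-\overline\rho)(1-\sigma)}{4}\,\xi\,k.
\end{equation*}
Plugging this into the first two estimates of Theorem \ref{Teo_compl_erg_inex} gives $\|\overline a_k+\overline b_k\|,\|\overline x_k-\overline y_k\| = \mathcal O(d_0/(\xi k))$, so smallness $\le \delta$ is achieved once $k = \mathcal O(d_0/(\xi\delta))$. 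For the $\overline\epsilon$-bound, I would control $\varphi_k$ by using $\xi_j\ge\xi$, $2-\rho_j \ge 1-\overline\rho$, and the linear lower bound on $\Gamma_k$, which gives $\varphi_k = \mathcal O(1/(\xi^2 k))$. Hence
\begin{equation*}
\overline\epsilon^x_k+\overline\epsilon^y_k \;\le\; \frac{d_0^2(\varphi_k+4)}{\Gamma_k} \;=\; \mathcal O\!\left(\frac{d_0^2}{\xi^3 k^2}\right) + \mathcal O\!\left(\frac{d_0^2}{\xi k}\right),
\end{equation*}
which is dominated by the second term once $k$ is large; smallness $\le\epsilon$ then follows from $k = \mathcal O(d_0^2/(\xi\epsilon))$. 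Taking the maximum of the two requirements completes part (b).

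There is no real obstacle here; the statement is quantitative bookkeeping built on top of the two preceding theorems. The mildly fiddly step is noticing that the $\varphi_k$ term in the ergodic $\overline\epsilon$-bound decays faster than the leading $1/\Gamma_k$ factor once $\Gamma_k$ grows linearly, so it does not worsen the overall rate; all remaining work consists in absorbing the constants depending on $\sigma$, $\overline\rho$, $\underline\lambda$, $\overline\lambda$ into the $\mathcal O(\cdot)$ notation.
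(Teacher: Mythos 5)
Your proposal is correct and follows essentially the same route as the paper's proof: extract $\xi_j\geq\xi$ from (A.1), deduce the linear lower bound $\Gamma_k\geq \tfrac{(1-\overline{\rho})(1-\sigma)}{4}\xi k$ via \eqref{gamm lower est}, bound $\varphi_k$, and substitute these into Theorems \ref{Teo ite comp inex} and \ref{Teo_compl_erg_inex}. The only (harmless) difference is that you keep the extra factor $1/k$ in the bound on $\varphi_k$ where the paper discards it; both versions suffice for the stated orders.
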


\begin{proof}
We first note that assumption (A.1) implies
\begin{equation}
\label{eq:xi_k}
\xi_{j}\geq\xi\qquad\qquad \text{for }\,\,j=1,2,\dots.
\end{equation}
Now, we combine the definition of $\Gamma_k$ in \eqref{def_xy_erg} with \eqref{gamm lower est} and \eqref{eq:xi_k} to obtain
 \begin{equation*}
 \Gamma_{k} \geq k(1-\overline{\rho})\xi\frac{(1-\sigma)}{4}.
 \end{equation*}
Furthermore, the inequality above, together with \eqref{eq:xi_k} and the definition of $\varphi_{k}$, yields 
 \begin{equation*}
 \varphi_{k} \leq \frac{8}{(1-\overline{\rho})^2(1-\sigma)^2\xi^2}.
 \end{equation*}
We conclude the proof combining Theorems \ref{Teo ite comp inex} and \ref{Teo_compl_erg_inex} with these three relations above. 
\end{proof}

\section{A Sequential Inexact Case}
\label{sequential}
In this section, we propose an inexact variant of a sequential case of the PSM and study its iteration-complexity. We observe that, unless $\alpha_{k}=0$, subproblems (\ref{sub_prob_b}) and (\ref{sub_prob_a}) cannot
be solved in parallel. For example, if we specialize the PSM by setting $\alpha_{k}=1$ for all $k$, we have to perform on each iteration the following steps
\begin{align*}
\lambda_{k}b_{k} + x_{k} &= z_{k-1} + \lambda_{k}w_{k-1},&& b_{k}\in B(x_{k}),\\
\mu_{k}a_{k} + y_{k} &= x_{k} - \mu_{k}w_{k-1},&& a_{k}\in A(y_{k}).
\end{align*}
Therefore, the first problem above must be solved after the second one; these steps cannot be performed simultaneously like the proximal subproblems of Algorithm \ref{alg_inex}. However,
this choice of $\alpha_{k}$ could be an advantage since the second subproblem uses more recent information, that is $x_{k}$ instead of $z_{k-1}$. 

In this section, we are assuming that the resolvent mappings of operator $B$ are easy to evaluate, but this is not the case for the proximal mappings associated with operator $A$. Such situations are typical in practice even in the case of convex optimization. Indeed, if $A=\partial f$ and $B=\partial g$ are the subdifferential operators of functions $f$ and $g$, where $f$ and $g$ are proper, convex and lower semicontinuous, then the solutions of the MIP \eqref{problem} are minimizers of the sum $f+g$. In this case, 
in order to evaluate 
the resolvent mapping $(I+\lambda A)^{-1}=(I+\lambda\partial f)^{-1}$, it is necessary to solve a strongly convex minimization problem and, if $f$ has a complicated algebraic expression, such problem could be hard to solve exactly. Therefore, it is desirable to admit inexact solutions of the proximal subproblems associated with this operator.

With these assumptions, we propose the following modification of the specific case of the PSM where $\alpha_k=1$ for all iteration $k$. Specifically, in Algorithm \ref{alg_seq_inex} below we allow the solution of the second proximal subproblem to be approximated, provided that the approximate solution satisfies the relative error condition of Definition \ref{Def:relative error}.

\begin{algorithm}
\label{alg_seq_inex}
Choose $(z_{0},w_{0})\in\R^n\times\R^n$, $\sigma\in\left[0,1/2\right[$ and $\overline{\rho}\in\left[0,1\right[$. Then, for $k=1,2,\dots$
\begin{itemize}
\item[1.] Choose $\lambda_{k}>0$ and calculate $(x_{k},b_{k})\in\R^n\times\R^n$ and $(y_{k},a_{k},\epsilon^y_{k})\in\mathbb{E}$ such that 
\begin{equation}
\label{subp_seq:B}
\lambda_{k}b_{k}+x_{k}=z_{k-1} + \lambda_{k}w_{k-1},\qquad\qquad b_{k}\in B(x_{k}),
\end{equation}
and
\begin{align}
\label{inclus_seq:A}
&\,\, \lambda_{k}a_{k} + y_{k}= x_{k}-\lambda_{k}w_{k-1} + r_{k}, \quad\qquad a_{k} \in A^{\epsilon^y_{k}}(y_{k}),\\
\label{error_criterion_seq:A}
&\norm{r_{k}}^{2} + 2\lambda_{k}\epsilon^y_{k}\, \leq \,\sigma\left(\norm{y_{k}-x_{k}}^{2} + \norm{\lambda_{k}(a_{k}+w_{k-1})}^{2}\right).
\end{align}

\item[2.] If $\norm{a_{k}+b_{k}} + \norm{x_{k}-y_{k}}=0$ stop. Otherwise, set 
$$\gamma_{k}=\frac{\inpr{z_{k-1}-x_{k}}{b_{k}-w_{k-1}} + \inpr{z_{k-1}-y_{k}}{a_{k}+w_{k-1}} - \epsilon^y_{k}}{\norm{a_{k}+b_{k}}^{2} + \norm{x_{k}-y_{k}}^{2}}.$$ 

\item[3.] Choose a parameter $\rho_{k}\in[1-\overline{\rho},1+\overline{\rho}]$ and set
\begin{align*}
z_{k} = &\,\, z_{k-1} - \rho_{k}\gamma_{k}(a_{k}+b_{k}),\\
w_{k} = &\,\, w_{k-1} - \rho_{k}\gamma_{k}(x_{k}-y_{k}).
\end{align*}
\end{itemize}
\end{algorithm}

We note that the maximum tolerance for the relative error in the resolution of (\ref{inclus_seq:A})-(\ref{error_criterion_seq:A}) is $1/2$, instead of $1$ as in Algorithm \ref{alg_inex}.
We also note that the proximal parameter in step 1 of Algorithm \ref{alg_seq_inex} is not allowed to change from one subproblem to another within an iteration.

For every integer $k\geq1$ denote by $\phi_k$ the decomposable separator associated with the triplets $(x_{k},b_{k},0)$ and $(y_{k},a_{k},\epsilon^y_{k})$, calculated in step 1 of Algorithm \ref{alg_seq_inex} (see Definition \ref{def_dec_sep_gen}).
It is thus clear that if $\phi_{k}(z_{k-1},w_{k-1})>0$ for all integer $k\geq1$, then Algorithm \ref{alg_seq_inex} is an instance of the general scheme presented in section \ref{sec:general framework}.

The following lemma implies that Algorithm \ref{alg_seq_inex} stops in step 2 when it has found a point in the extended solution set $\exset{A}{B}$.

\begin{lemma}
\label{Lem phi seq}
Let $\{(x_{k},b_{k})\}$, $\{(y_{k},a_{k},\epsilon^y_{k})\}$, $\{(z_{k},w_{k})\}$, $\{\lambda_k\}$ and $\{\rho_k\}$ be the sequences generated by Algorithm $\ref{alg_seq_inex}$, 
and $\{\phi_k\}$ be the sequence of decomposable separators associated with Algorithm $\ref{alg_seq_inex}$. Then, for all integer $k\geq1$, we have
\begin{equation}
\label{phi_lower_est_seq}
\phi_{k}(z_{k-1},w_{k-1})\geq\frac{1-2\sigma}{2}\tau_{k}\left(\norm{a_{k}+b_{k}}^{2}+\norm{x_{k}-y_{k}}^{2}\right)\geq0,
\end{equation}
where 
\begin{equation}
\label{def_tau}
\tau_{k}:=\min\left\lbrace\lambda_{k},\frac{1}{\lambda_{k}}\right\rbrace.
\end{equation}
If $\norm{\nabla\phi_{k}}>0$, then it follows that $\phi_{k}(z_{k-1},w_{k-1})>0$. Furthermore, $\norm{\nabla\phi_{k}}=0$ if and only if $(x_{k},b_{k})=(y_{k},-a_{k})\in\exset{A}{B}$.
\end{lemma}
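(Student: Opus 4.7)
The plan is to mirror the strategy of Lemma \ref{Lem phi inex}, but the sequential coupling between the two subproblems prevents the clean decoupling available in the parallel case. My first move will be to derive a clean algebraic identity for $\phi_k(z_{k-1},w_{k-1})$. Setting $p := \lambda_k(b_k - w_{k-1}) = z_{k-1} - x_k$ and $q := \lambda_k(a_k + w_{k-1}) = x_k - y_k + r_k$ from \eqref{subp_seq:B} and \eqref{inclus_seq:A}, we have $p + q = \lambda_k(a_k + b_k) = z_{k-1} - y_k + r_k$ and $q - r_k = x_k - y_k$. Expanding $\phi_k$ via these substitutions and combining the cross terms $\inpr{p}{q}$ and $\inpr{r_k}{q}$ through the polarization identities $2\inpr{p}{q} = \norm{p+q}^2 - \norm{p}^2 - \norm{q}^2$ and $2\inpr{r_k}{q} = \norm{q}^2 + \norm{r_k}^2 - \norm{q - r_k}^2$, I expect the cancellations to yield
\begin{equation}
\label{eq:key-ident-seq-plan}
2\lambda_k \phi_k(z_{k-1}, w_{k-1}) = \norm{\lambda_k(b_k - w_{k-1})}^2 + \lambda_k^2\norm{a_k + b_k}^2 + \norm{x_k - y_k}^2 - \norm{r_k}^2 - 2\lambda_k\epsilon^y_k.
\end{equation}

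Next I will absorb the negative contribution $-\norm{r_k}^2 - 2\lambda_k\epsilon^y_k$ using \eqref{error_criterion_seq:A}. The key observation is that $\lambda_k(a_k+w_{k-1}) = \lambda_k(a_k+b_k) - \lambda_k(b_k-w_{k-1})$, so $\norm{\lambda_k(a_k+w_{k-1})}^2 \leq 2\lambda_k^2\norm{a_k+b_k}^2 + 2\norm{\lambda_k(b_k-w_{k-1})}^2$; plugging this into \eqref{error_criterion_seq:A} gives
\begin{equation*}
\norm{r_k}^2 + 2\lambda_k\epsilon^y_k \leq \sigma\norm{x_k-y_k}^2 + 2\sigma\lambda_k^2\norm{a_k+b_k}^2 + 2\sigma\norm{\lambda_k(b_k-w_{k-1})}^2.
\end{equation*}
Substituting this into \eqref{eq:key-ident-seq-plan} will yield
\begin{equation*}
2\lambda_k\phi_k(z_{k-1},w_{k-1}) \geq (1-2\sigma)\norm{\lambda_k(b_k-w_{k-1})}^2 + (1-2\sigma)\lambda_k^2\norm{a_k+b_k}^2 + (1-\sigma)\norm{x_k-y_k}^2.
\end{equation*}
Since $\sigma < 1/2$ all three coefficients are positive; dropping the first summand, using $1-\sigma \geq 1-2\sigma$, dividing by $2\lambda_k$, and invoking $\lambda_k,\,1/\lambda_k \geq \tau_k$ from \eqref{def_tau} produce \eqref{phi_lower_est_seq}. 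Nonnegativity of the right-hand side is then clear, and the strict positivity of $\phi_k(z_{k-1},w_{k-1})$ whenever $\norm{\nabla\phi_k}^2 = \norm{a_k+b_k}^2 + \norm{x_k-y_k}^2 > 0$ follows because $\tau_k > 0$ and $1-2\sigma > 0$.

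For the final equivalence, if $\norm{\nabla\phi_k} = 0$ then $a_k + b_k = 0$ and $x_k = y_k$, so $\lambda_k(a_k+w_{k-1}) = r_k$; the tolerance bound \eqref{error_criterion_seq:A} then collapses to $\norm{r_k}^2 + 2\lambda_k\epsilon^y_k \leq \sigma\norm{r_k}^2$, which forces $r_k = 0$ and $\epsilon^y_k = 0$. Hence $a_k \in A^0(y_k) = A(y_k)$ by maximal monotonicity of $A$, and combining with $b_k \in B(x_k)$, $x_k = y_k$, and $b_k = -a_k$ we conclude $(x_k, b_k) = (y_k, -a_k) \in \exset{A}{B}$; the converse direction is immediate.

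The main obstacle will be the derivation of \eqref{eq:key-ident-seq-plan}: unlike the parallel case of Lemma \ref{Lem phi inex}, $\phi_k$ no longer splits into two independent proximal gap terms because the right-hand side of \eqref{inclus_seq:A} involves $x_k$ rather than $z_{k-1}$, so the extra cross terms obtained upon substituting must be cancelled by carefully exploiting the auxiliary identity $\lambda_k(a_k+b_k) = z_{k-1} - y_k + r_k$. The stricter tolerance $\sigma < 1/2$ (rather than the $\sigma < 1$ of Lemma \ref{Lem phi inex}) is also intrinsic: it reflects exactly the factor $2$ in the triangle estimate $\norm{u - v}^2 \leq 2\norm{u}^2 + 2\norm{v}^2$ applied to $\norm{\lambda_k(a_k+w_{k-1})}^2$, a price we pay for the sequential rather than parallel structure.
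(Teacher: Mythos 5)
Your proposal is correct and follows essentially the same route as the paper: your identity \eqref{eq:key-ident-seq-plan} is $2\lambda_k$ times the paper's intermediate identity, your displayed lower bound is exactly $2\lambda_k$ times the paper's inequality \eqref{phi_seq_est} (the paper just applies the bound $\norm{a_k+w_{k-1}}^2\leq 2\norm{a_k+b_k}^2+2\norm{b_k-w_{k-1}}^2$ after substituting the error criterion rather than before), and the remaining steps coincide. The only cosmetic difference is in the final equivalence, where you extract $r_k=0$ and $\epsilon^y_k=0$ directly from \eqref{error_criterion_seq:A} while the paper reads $\epsilon^y_k=0$ off the identity $\phi_k(z_{k-1},w_{k-1})=-\epsilon^y_k$; both are valid.
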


\begin{proof}
Since 
\begin{equation*}
\phi_{k}(z_{k-1},w_{k-1})=\inpr{z_{k-1}-x_{k}}{b_{k}-w_{k-1}}  + \inpr{z_{k-1}-y_{k}}{a_{k}+w_{k-1}} - \epsilon^y_{k},
\end{equation*}
adding and subtracting $\inpr{x_{k}}{a_{k}+w_{k-1}}$ on the right-hand side of this equation and regrouping the terms, we obtain
\begin{equation}
\label{identity_phi_seq}
\begin{split}
\phi_{k}(z_{k-1},w_{k-1})=&\,\inpr{z_{k-1}-x_{k}}{b_{k}+a_{k}}  + \inpr{x_{k}-y_{k}}{a_{k}+w_{k-1}} - \epsilon^y_{k}\\
= &\, \lambda_{k}\inpr{b_{k}-w_{k-1}}{b_{k}+a_{k}} + \frac{1}{2\lambda_{k}}\left[\norm{x_{k}-y_{k}}^{2}+\norm{\lambda_{k}(a_{k}+w_{k-1})}^{2}\right]\\
& \,- \frac{1}{2\lambda_{k}}\left[\norm{r_{k}}^{2} + 2\lambda_{k}\epsilon^y_{k}\right],
\end{split}
\end{equation}
where we have used in the last equality the identity in (\ref{subp_seq:B}) and $r_{k}$ is given in (\ref{inclus_seq:A}). We observe that 
\begin{equation*}
\lambda_{k}\inpr{b_{k}-w_{k-1}}{b_{k}+a_{k}} = \frac{\lambda_{k}}{2}\left[\norm{b_{k}-w_{k-1}}^{2}+\norm{b_{k}+a_{k}}^{2}-\norm{a_{k}+w_{k-1}}^{2}\right].
\end{equation*}
Hence, combining equality above with \eqref{identity_phi_seq} and the error criterion (\ref{error_criterion_seq:A}) we have
\begin{align*}
\phi_{k}(z_{k-1},w_{k-1})\geq \frac{\lambda_{k}}{2}\norm{b_{k}-w_{k-1}}^{2} + \frac{\lambda_{k}}{2}\norm{a_{k}+b_{k}}^{2} + \frac{1-\sigma}{2\lambda_{k}}\norm{x_{k}-y_{k}}^{2} - \frac{\sigma\lambda_{k}}{2}\norm{a_{k}+w_{k-1}}^{2}.
\end{align*}
Since $\norm{a_{k}+w_{k-1}}^{2}\leq2\norm{a_{k}+b_{k}}^{2}+2\norm{b_{k}-w_{k-1}}^{2}$, we deduce that
\begin{equation}
\label{phi_seq_est}
\phi_{k}(z_{k-1},w_{k-1})\geq \frac{\lambda_{k}(1-2\sigma)}{2}\norm{b_{k}-w_{k-1}}^{2} + \frac{\lambda_{k}(1-2\sigma)}{2}\norm{a_{k}+b_{k}}^{2} + \frac{1-\sigma}{2\lambda_{k}}\norm{x_{k}-y_{k}}^{2}.
\end{equation}
The inequalities in (\ref{phi_lower_est_seq}) now follow from the relation above, the definition of $\tau_{k}$ and noting that $1-\sigma\geq1-2\sigma>0$.

The claim that $\norm{\nabla\phi_{k}}>0$ implies $\phi_{k}(z_{k-1},w_{k-1})>0$ is obtained as a direct consequence of (\ref{phi_lower_est_seq}). To prove the remaining assertion of the lemma we observe
that if $\norm{\nabla\phi_{k}}=0$, then $x_{k}=y_{k}$, $b_{k}=-a_{k}$ and it follows from \eqref{phi_lower_est_seq}, the first equality in (\ref{identity_phi_seq}) and the fact that $\epsilon^y_{k}\in\R_+$, that $\epsilon^y_{k}=0$.
Thus, we have $(x_{k},b_{k})\in\exset{A}{B}$.
\end{proof}

From now on we assume that Algorithm \ref{alg_seq_inex} generates infinite sequences $\{x_k\}$ and $\{y_k\}$, which is equivalent to $\norm{\nabla\phi_{k}}>0$ for every integer $k\geq1$. 

We are now ready to establish pointwise iteration-complexity bounds for Algorithm \ref{alg_seq_inex}.
The theorem below will be proven in much the same way as Theorem \ref{Teo ite comp inex}, using Lemma \ref{Lem phi seq} instead of Lemma \ref{Lem phi inex}. 

\begin{theorem}
\label{Teo it compl seq}
Take $(z_0,w_0)\in\R^n\times\R^n$ and let  $\{(x_{k},b_{k})\}$, $\{(y_{k},a_{k},\epsilon^y_{k})\}$, $\{\lambda_k\}$, $\{\gamma_k\}$ and $\{\rho_k\}$ be the sequences generated by Algorithm $\ref{alg_seq_inex}$. 
Let $d_{0}$ be the distance of $(z_{0},w_{0})$ to $\exset{A}{B}$ and, for all integer $k\geq1$, let $\tau_k$ be given by \eqref{def_tau}. Then, for every integer $k\geq1$, we have
\begin{equation}
\label{inclusions it sequential}
b_k\in B(x_k),\qquad\qquad a_k\in A^{\epsilon^y_{k}}(y_k),
\end{equation}
and there exists an index $1\leq i\leq k$ such that 
\begin{align*}
\norm{a_{i}+b_{i}}^{2} + \norm{x_{i}-y_{i}}^{2} \leq &\, \frac{4d_{0}^{2}}{(1-2\sigma)^{2}(1-\overline{\rho})^{2}\tau_{i}\sum\limits_{j=1}^{k}\tau_{j}},\\
\epsilon^y_{i} \leq &\, \frac{4\sigma d_{0}^{2}}{(1-2\sigma)^{2}(1-\overline{\rho})^{2}\sum\limits_{j=1}^{k}\tau_{j}}.
\end{align*}
\end{theorem}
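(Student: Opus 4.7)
The plan is to mirror closely the proof of Theorem \ref{Teo ite comp inex}, with Lemma \ref{Lem phi seq} playing the role that Lemma \ref{Lem phi inex} played there. The inclusions in \eqref{inclusions it sequential} are immediate from step 1 of Algorithm \ref{alg_seq_inex}, so the real work is producing the quantitative estimates via a common quantity $\psi_k$ that simultaneously controls both $\norm{a_k+b_k}^2+\norm{x_k-y_k}^2$ and $\epsilon^y_k$.

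First I would use the identity $\gamma_k = \phi_k(z_{k-1},w_{k-1})/\norm{\nabla\phi_k}^2$ from step 2 together with the first inequality in \eqref{phi_lower_est_seq} to obtain
\begin{equation*}
\gamma_k \geq \frac{1-2\sigma}{2}\,\tau_k,
\end{equation*}
so that squaring and multiplying by $\norm{\nabla\phi_k}^2$ yields a lower bound $\gamma_k^2\norm{\nabla\phi_k}^2 \geq \bigl((1-2\sigma)/2\bigr)^2\tau_k^2\norm{\nabla\phi_k}^2$. This handles the first half of the eventual $\psi_k$.

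The main obstacle, and the one step that differs genuinely from the proof of Theorem \ref{Teo ite comp inex}, is producing a clean upper bound of the form $\epsilon^y_k \leq C\,\phi_k(z_{k-1},w_{k-1})$ with an explicit constant $C$. The plan is to start from the error criterion \eqref{error_criterion_seq:A}, which gives $\epsilon^y_k \leq \tfrac{\sigma}{2\lambda_k}\norm{x_k-y_k}^2 + \tfrac{\sigma\lambda_k}{2}\norm{a_k+w_{k-1}}^2$, and then use $\norm{a_k+w_{k-1}}^2 \leq 2\norm{a_k+b_k}^2 + 2\norm{b_k-w_{k-1}}^2$ to split the second term into pieces matching the three summands already appearing in the intermediate bound \eqref{phi_seq_est} from the proof of Lemma \ref{Lem phi seq}. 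Comparing coefficients (which forces $C \geq \max\{\tfrac{2\sigma}{1-2\sigma},\tfrac{\sigma}{1-\sigma}\} = \tfrac{2\sigma}{1-2\sigma}$, and here one uses $\sigma < 1/2$ to keep $C$ finite), I would conclude $\epsilon^y_k \leq \tfrac{2\sigma}{1-2\sigma}\gamma_k\norm{\nabla\phi_k}^2$. Multiplying by $\gamma_k$ and substituting the lower bound on $\gamma_k$ then gives
\begin{equation*}
\frac{(1-2\sigma)^2}{4\sigma}\,\tau_k\,\epsilon^y_k \leq \gamma_k^2\norm{\nabla\phi_k}^2.
\end{equation*}

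With both lower bounds in hand, I would set
\begin{equation*}
\psi_k := \max\left\{\left(\frac{1-2\sigma}{2}\right)^2\tau_k\norm{\nabla\phi_k}^2,\ \frac{(1-2\sigma)^2}{4\sigma}\,\epsilon^y_k\right\},
\end{equation*}
so that $\tau_k\psi_k \leq \gamma_k^2\norm{\nabla\phi_k}^2$ for every $k$. Summing from $j=1$ to $k$, using $\rho_j(2-\rho_j)\geq(1-\overline{\rho})^2$ from the choice $\rho_j\in[1-\overline{\rho},1+\overline{\rho}]$, and invoking the first bound in \eqref{dist_est} of Theorem \ref{Prop general proj}, I obtain
\begin{equation*}
\sum_{j=1}^k \tau_j\psi_j \leq \frac{d_0^2}{(1-\overline{\rho})^2},
\end{equation*}
which implies $(\min_{1\leq j\leq k}\psi_j)\sum_{j=1}^k\tau_j \leq d_0^2/(1-\overline{\rho})^2$. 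Taking $i$ to be an index achieving this minimum and reading back through the definition of $\psi_k$ (using $\norm{\nabla\phi_i}^2 = \norm{a_i+b_i}^2+\norm{x_i-y_i}^2$) yields the two desired estimates with the precise constants $4/(1-2\sigma)^2$ and $4\sigma/(1-2\sigma)^2$.
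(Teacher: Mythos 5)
Your proposal is correct and follows essentially the same route as the paper's proof: the lower bound $\gamma_k\geq\tfrac{1-2\sigma}{2}\tau_k$, the comparison of the $\epsilon^y_k$ bound against the three terms of \eqref{phi_seq_est} to get $\epsilon^y_k\leq\tfrac{2\sigma}{1-2\sigma}\gamma_k\norm{\nabla\phi_k}^2$, and the same $\psi_k$ with the min-index argument are exactly the steps the paper uses. Your coefficient-matching derivation of the constant $\tfrac{2\sigma}{1-2\sigma}$ is, if anything, written more carefully than the paper's corresponding display.
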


\begin{proof}
The inclusions in \eqref{inclusions it sequential} are due to step 1 of Algorithm \ref{alg_seq_inex}. It follows from the definition of $\gamma_{k}$ and inequality (\ref{phi_lower_est_seq}) that
\begin{equation}
\label{est_gamm_seq}
\gamma_{k}\geq \left(\frac{1-2\sigma}{2}\right)\tau_{k}\qquad\quad \text{for }\,k=1,2,\dots.
\end{equation}
Squaring both sides of the above inequality and multiplying by $\norm{\nabla\phi_{k}}^{2}$ we obtain
\begin{equation}
\label{eq:nabla_1}
\gamma_{k}^{2}\norm{\nabla\phi_{k}}^{2}\geq \left(\frac{1-2\sigma}{2}\right)^{2}\tau_{k}^{2}\norm{\nabla\phi_{k}}^{2},\qquad\quad \text{for }\,k=1,2,\dots.
\end{equation}
Now, we note that the error criterion (\ref{error_criterion_seq:A}) implies
\begin{equation*}
\epsilon^y_{k}\leq\frac{\sigma}{2\lambda_{k}}\left[\norm{x_{k}-y_{k}}^{2}+\norm{\lambda_{k}(a_{k}+b_{k})}^{2}\right].
\end{equation*}
Consequently, we have
\begin{equation*}
\epsilon^y_{k}\leq\frac{\sigma}{2\lambda_{k}}\norm{x_{k}-y_{k}}^{2}+\sigma\lambda_{k}\norm{a_{k}+w_{k-1}}^{2} +\sigma\lambda_{k}\norm{b_{k}-w_{k-1}}^{2}.
\end{equation*}
The above inequality, together with (\ref{phi_seq_est}), yields
\begin{equation*}
\epsilon^y_{k} \leq \frac{2\sigma}{1-2\sigma}\phi_{k}(z_{k-1},w_{k-1}).
\end{equation*}
Next, multiplying the above relation by $\gamma_{k}$ and combining with (\ref{est_gamm_seq}), after some manipulations, we obtain
\begin{equation}
\label{eq:nabla_2}
\frac{(1-2\sigma)^{2}}{4\sigma}\tau_{k}\epsilon^y_{k}\leq \gamma_{k}^{2}\norm{\nabla\phi_{k}}^{2}.
\end{equation}
Finally, defining
\begin{equation*}
\psi_{k}:=\max\left\lbrace\frac{(1-2\sigma)^2}{4}\tau_{k}\norm{\nabla\phi_{k}}^{2},\frac{(1-2\sigma)^{2}}{4\sigma}\epsilon^y_{k}\right\rbrace
\end{equation*}
and using \eqref{eq:nabla_1} and \eqref{eq:nabla_2}, we can conclude the proof proceeding analogously to the proof of Theorem \ref{Teo ite comp inex}.
\end{proof}

The following theorem presents complexity estimates in the ergodic sense for Algorithm \ref{alg_seq_inex}.

\begin{theorem}
\label{Teo_compl_erg_seq}
Let $\{(x_{k},b_{k})\}$, $\{(y_{k},a_{k},\epsilon^y_{k})\}$, $\{\gamma_k\}$ and $\{\rho_k\}$ be the sequences generated by Algorithm $\ref{alg_seq_inex}$. Let 
$\{(\overline{x}_{k},\overline{b}_{k},\overline{\epsilon}^x_{k})\}$ and $\{(\overline{y}_{k},\overline{a}_{k},\overline{\epsilon}^y_{k})\}$ be the associated sequences of ergodic iterates, defined as in \eqref{def_xy_erg}-\eqref{def_res_erg_y}, and consider $\tau_k$ given by \eqref{def_tau}. Then, for all integer $k\geq1$, we have
\begin{equation}
\label{inclusion erg seq}
\overline{b}_k\in B^{\overline{\epsilon}^x_{k}}(\overline{x}_k),\qquad\qquad \overline{a}_k\in A^{\overline{\epsilon}^y_{k}}(\overline{y}_k),
\end{equation}
and
\begin{align}
\label{bounds erg seq}
&\norm{\overline{a}_{k}+\overline{b}_{k}} \leq \frac{2d_{0}}{\Gamma_{k}}, &\norm{\overline{x}_{k}-\overline{y}_{k}} \leq  \frac{2d_{0}}{\Gamma_{k}},& &\overline{\epsilon}^x_{k} + \overline{\epsilon}^y_{k} \leq \frac{d_{0}^{2}(\vartheta_{k}+4)}{\Gamma_{k}},
\end{align}
where $d_{0}$ is the distance of $(z_{0},w_{0})$ to $\exset{A}{B}$ and 
$$\vartheta_{k}:=\max_{j=1,\ldots,k}\left\lbrace\frac{8}{\tau_{j}(1-2\sigma)(2-\rho_{j})\Gamma_{k}}\right\rbrace.$$
\end{theorem}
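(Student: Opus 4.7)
The plan is to mirror the proof of Theorem \ref{Teo_compl_erg_inex}, exploiting the fact that Algorithm \ref{alg_seq_inex} is an instance of the general framework of Algorithm \ref{alg_genr_proj} (as pointed out just before Lemma \ref{Lem phi seq}). Consequently, the inclusions in \eqref{inclusion erg seq} follow immediately from Lemma \ref{Lem_residual_erg} applied to the triplets $(x_j, b_j, 0)$ and $(y_j, a_j, \epsilon^y_j)$, and the first two bounds in \eqref{bounds erg seq} are a direct restatement of \eqref{compl_erg_ax} from Theorem \ref{Teo erg bound}. No modification of either argument is required.

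The heart of the proof is the bound on $\overline{\epsilon}^x_k + \overline{\epsilon}^y_k$. Following the scheme of Theorem \ref{Teo_compl_erg_inex}, I would control this quantity via \eqref{compl_erg_ep}, which reduces the task to bounding $\norm{(y_j, b_j) - (z_{j-1}, w_{j-1})}^2$ by a suitable multiple of $\gamma_j\norm{\nabla\phi_j}^2$. The new ingredient, replacing the matrix inequality \eqref{matrix_4} from the parallel case, is the lower bound \eqref{phi_seq_est} proven inside Lemma \ref{Lem phi seq}, which already controls $\norm{b_j-w_{j-1}}^2$ and $\norm{x_j-y_j}^2$ in terms of $\phi_j(z_{j-1},w_{j-1})$. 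Since \eqref{subp_seq:B} gives the identity $z_{j-1} - x_j = \lambda_j(b_j - w_{j-1})$, the triangle inequality $\norm{y_j - z_{j-1}}^2 \leq 2\norm{y_j - x_j}^2 + 2\lambda_j^2\norm{b_j-w_{j-1}}^2$, together with the elementary estimates $\lambda_j \leq 1/\tau_j$ and $1/\lambda_j \leq 1/\tau_j$, then produces control of $\norm{y_j - z_{j-1}}^2$ as well. Combining these bounds yields an inequality of the form
\begin{equation*}
\norm{(y_j, b_j) - (z_{j-1}, w_{j-1})}^2 \leq \frac{C}{\tau_j(1-2\sigma)}\,\phi_j(z_{j-1}, w_{j-1}) = \frac{C}{\tau_j(1-2\sigma)}\,\gamma_j\norm{\nabla\phi_j}^2,
\end{equation*}
where the equality uses the definition of $\gamma_j$ in step 2 of Algorithm \ref{alg_seq_inex}.

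To conclude, I would multiply this inequality by $\rho_j\gamma_j/\Gamma_k$, sum from $j=1$ to $k$, insert the factor $(2-\rho_j)/(2-\rho_j)$ to match the summand in the first estimate of \eqref{dist_est}, factor out $\max_{j}\{1/[\tau_j(1-2\sigma)(2-\rho_j)\Gamma_k]\}$, and then apply \eqref{dist_est} to arrive at an upper bound proportional to $d_0^2\vartheta_k/\Gamma_k$; substitution into \eqref{compl_erg_ep} delivers the desired third inequality in \eqref{bounds erg seq}. The main obstacle I anticipate is purely constant-bookkeeping: to land on exactly the value $8$ appearing in the definition of $\vartheta_k$, one must use the sharper factor $(1-\sigma)^{-1}$ (rather than the looser $(1-2\sigma)^{-1}$) when extracting $\norm{x_j - y_j}^2$ from \eqref{phi_seq_est}, and exploit the asymmetric roles played by $\lambda_j$ and $1/\lambda_j$ in controlling $\norm{y_j - z_{j-1}}^2$ versus $\norm{b_j - w_{j-1}}^2$. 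Conceptually, however, once the intermediate estimate above is established, the rest of the argument is identical to the corresponding calculation already carried out in the proof of Theorem \ref{Teo_compl_erg_inex}.
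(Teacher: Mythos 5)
Your overall route is exactly the paper's: the inclusions and the first two bounds come from Lemma \ref{Lem_residual_erg} and Theorem \ref{Teo erg bound}, and the third bound is obtained by feeding an estimate of $\norm{(y_j,b_j)-(z_{j-1},w_{j-1})}^2$ in terms of $\gamma_j\norm{\nabla\phi_j}^2$ into \eqref{compl_erg_ep}, exactly as in Theorem \ref{Teo_compl_erg_inex}. The one place where your plan as written does not close is the constant $8$ in $\vartheta_k$. If you bound $\norm{b_j-w_{j-1}}^2$ and $\norm{x_j-y_j}^2$ separately from \eqref{phi_seq_est} and then substitute into $\norm{z_{j-1}-y_j}^2\le 2\norm{x_j-y_j}^2+2\lambda_j^2\norm{b_j-w_{j-1}}^2$, applying $\lambda_j\le 1/\tau_j$ and $1/\lambda_j\le 1/\tau_j$ at the end gives $\norm{z_{j-1}-y_j}^2+\norm{b_j-w_{j-1}}^2\le \frac{1}{\tau_j}\left(\frac{4}{1-\sigma}+\frac{6}{1-2\sigma}\right)\phi_j(z_{j-1},w_{j-1})$, which at $\sigma=0$ is $10\phi_j/\tau_j$; keeping the sharper $(1-\sigma)^{-1}$ on the $\norm{x_j-y_j}^2$ term, as you suggest, only brings this below $8/[(1-2\sigma)\tau_j]$ when $\sigma\ge 1/3$, so it does not recover the stated constant. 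The paper's actual device is to never decouple the two terms: from \eqref{subp_seq:B} and the triangle inequality, $\frac{1}{2\lambda_j}\norm{z_{j-1}-y_j}^2\le \lambda_j\norm{b_j-w_{j-1}}^2+\frac{1}{\lambda_j}\norm{x_j-y_j}^2$, and this particular weighted combination is bounded by $\frac{2}{1-2\sigma}\phi_j(z_{j-1},w_{j-1})$ in a single application of \eqref{phi_seq_est} (using $1-\sigma\ge 1-2\sigma$). Adding the separate bound $\lambda_j\norm{b_j-w_{j-1}}^2\le\frac{2}{1-2\sigma}\phi_j(z_{j-1},w_{j-1})$ yields $\frac{1}{2\lambda_j}\norm{z_{j-1}-y_j}^2+\lambda_j\norm{b_j-w_{j-1}}^2\le\frac{4}{1-2\sigma}\phi_j(z_{j-1},w_{j-1})$, and only now does one pass to $\tau_j$, paying a single factor $2/\tau_j$ since both weights $\frac{1}{2\lambda_j}$ and $\lambda_j$ are at least $\tau_j/2$; this gives exactly $\frac{8}{(1-2\sigma)\tau_j}\gamma_j\norm{\nabla\phi_j}^2$. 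With that intermediate estimate in place, your summation argument (inserting $(2-\rho_j)/(2-\rho_j)$, extracting $\vartheta_k$, and invoking \eqref{dist_est}) goes through verbatim.
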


\begin{proof}
Since Algorithm \ref{alg_seq_inex} is an instance of Algorithm \ref{alg_genr_proj}, Lemma \ref{Lem_residual_erg} and Theorem \ref{Teo erg bound} apply, therefore the inclusions in \eqref{inclusion erg seq} and the first two inequalities in (\ref{bounds erg seq}) follow.

We derive now an estimate for the sum on the right-hand side of (\ref{compl_erg_ep}). We note that (\ref{phi_seq_est}) implies
\begin{equation}
\label{ineq phi seq}
\phi_{j}(z_{j-1},w_{j-1})\geq \lambda_{j}\left(\frac{1-2\sigma}{2}\right)\norm{b_{j}-w_{j-1}}^{2}
\end{equation}
for all integer $j\geq1$. We also note that 
\begin{equation*}
z_{j-1}-y_{j}=z_{j-1}-x_{j}+x_{j}-y_{j}=\lambda_{j}(b_{j}-w_{j-1})+x_{j}-y_{j},
\end{equation*}
where the last identity is due to the equality in (\ref{subp_seq:B}). This last expression and the triangle inequality for norms yield
\begin{equation*}
 \norm{z_{j-1}-y_{j}} \leq \lambda_{j}\norm{b_{j}-w_{j-1}}+\norm{x_{j}-y_{j}}.
\end{equation*}
Moreover, squaring both sides of the inequality above and making some manipulations, we obtain
\begin{equation}
\label{eq:zybw}
\begin{split}
\frac{1}{2\lambda_{j}}\norm{z_{j-1}-y_{j}}^{2}&\leq \lambda_{j}\norm{b_{j}-w_{j-1}}^{2}+\frac{1}{\lambda_{j}}\norm{x_{j}-y_{j}}^{2}\\&\leq \frac{2}{1-2\sigma}\phi_{j}(z_{j-1},w_{j-1}),
\end{split}
\end{equation}
where the last inequality follows from \eqref{phi_seq_est}. Now, adding \eqref{ineq phi seq} and \eqref{eq:zybw} we have
\begin{equation*}
\frac{1}{2\lambda_j}\norm{z_{j-1}-y_j}^2+\lambda_j\norm{b_j-w_{j-1}}^2\leq  \frac{4}{1-2\sigma}\phi_{j}(z_{j-1},w_{j-1}).
\end{equation*}
The above relation, together with the definitions of $\gamma_j$ and $\tau_j$, implies
\begin{equation*}
\norm{b_{j}-w_{j-1}}^{2} + \norm{z_{j-1}-y_{j}}^{2} \leq \frac{8}{(1-2\sigma)\tau_{j}}\gamma_{j}\norm{\nabla\phi_{j}}^{2}.
\end{equation*}
Multiplying both sides of the above inequality by $\dfrac{1}{\Gamma_k}\rho_{j}\gamma_{j}$ and adding from $j=1$ to $k$, we obtain the desired estimate, i.e.
\begin{align*}
 \dfrac{1}{\Gamma_k}\sum_{j=1}^k\rho_j\gamma_j\left[\norm{b_{j}-w_{j-1}}^{2} + \norm{z_{j-1}-y_{j}}^{2}\right] &\leq \dfrac{1}{\Gamma_k}\sum_{j=1}^k\frac{8}{(1-2\sigma)\tau_{j}}\rho_j\gamma_{j}^2\norm{\nabla\phi_{j}}^{2}\\
 &= \dfrac{1}{\Gamma_k}\sum_{j=1}^k\frac{8}{(1-2\sigma)\tau_{j}(2-\rho_j)}\rho_j(2-\rho_j)\gamma_{j}^2\norm{\nabla\phi_{j}}^{2}\\
 & \leq \vartheta_k\sum_{j=1}^k\rho_j(2-\rho_j)\gamma_{j}^2\norm{\nabla\phi_{j}}^{2}\\
 & \leq\vartheta_kd_0^2,
\end{align*}
where the second and the third inequalities above follow from the definition of $\vartheta_k$ and \eqref{dist_est}, respectively.
The proof of the last bound in (\ref{bounds erg seq}) now follows combining the above relation with \eqref{compl_erg_ep}.
\end{proof}

Next result provides complexity bounds for Algorithm \ref{alg_seq_inex} to find a $(\delta,\epsilon)$-approximate solution of problem 
(\ref{problem}).
It may be proven in much the same way as Theorem \ref{Teo approx solution} and for the sake of brevity we omit the proof here. 

\begin{theorem}
Assume the hypotheses of Theorem $\ref{Teo_compl_erg_seq}$. Suppose also that there exist $\overline{\lambda}$ and $\underline{\lambda}$ such that $\overline{\lambda}\geq\underline{\lambda}>0$ and
$\lambda_{k}\in[\underline{\lambda},\overline{\lambda}]$, for all integer $k\geq1$, and define $\tau:=\min\left\lbrace\underline{\lambda},\dfrac{1}{\overline{\lambda}}\right\rbrace$. Then, for every $\delta,\,\epsilon>0$, the following claims hold.
\begin{itemize}
\item[(a)] There exists an index 
\begin{equation*}
i = \mathcal{O}\left(\max\left\lbrace\frac{d_{0}^{2}}{\tau^{2}\delta^{2}},\frac{d_{0}^{2}}{\tau\epsilon}\right\rbrace\right)
\end{equation*}
such that the point $(x_{i},y_{i})$ calculated by Algorithm $\ref{alg_seq_inex}$ is a $(\delta,\epsilon)$-approximate solution of problem \eqref{problem}.
\item[(b)] There exists an index
\begin{equation*}
k_{0} = \mathcal{O}\left(\max\left\lbrace \frac{d_{0}}{\tau\delta},\frac{d_{0}^{2}}{\tau\epsilon} \right\rbrace\right)
\end{equation*}
such that, for any $k \geq k_{0}$, the ergodic iterate $(\overline{x}_{k},\overline{y}_{k})$ is a $(\delta,\epsilon)$-approximate solution of problem \eqref{problem}.
\end{itemize}
\end{theorem}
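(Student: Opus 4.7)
The plan is to mirror the structure of the proof of Theorem \ref{Teo approx solution}, using Theorems \ref{Teo it compl seq} and \ref{Teo_compl_erg_seq} in place of Theorems \ref{Teo ite comp inex} and \ref{Teo_compl_erg_inex}, respectively. The first step, common to both parts, is to translate the hypothesis $\lambda_k\in[\underline{\lambda},\overline{\lambda}]$ into a uniform lower bound on $\tau_k$: directly from the definition \eqref{def_tau} one gets
\begin{equation*}
\tau_k=\min\left\{\lambda_k,\frac{1}{\lambda_k}\right\}\geq \min\left\{\underline{\lambda},\frac{1}{\overline{\lambda}}\right\}=\tau\qquad\text{for all }k\geq1.
\end{equation*}

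For part (a), I would substitute $\tau_j\geq\tau$ and $\tau_i\geq\tau$ into the two pointwise estimates of Theorem \ref{Teo it compl seq}, obtaining
\begin{equation*}
\norm{a_i+b_i}^2+\norm{x_i-y_i}^2\leq \frac{4d_0^2}{(1-2\sigma)^2(1-\overline{\rho})^2\tau^2\,k},\qquad \epsilon^y_i\leq \frac{4\sigma d_0^2}{(1-2\sigma)^2(1-\overline{\rho})^2\tau\,k}.
\end{equation*}
Since $b_k\in B(x_k)$ (so one may set $\epsilon^x_k=0$), the pair $(x_i,y_i)$ is a $(\delta,\epsilon)$-approximate solution as soon as the first right-hand side is at most $\delta^2$ and the second is at most $\epsilon$; this forces $k$ to be of order $\max\{d_0^2/(\tau^2\delta^2),\,d_0^2/(\tau\epsilon)\}$, which is the claimed bound.

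For part (b), the key preparatory estimate is a lower bound on $\Gamma_k$. Combining \eqref{est_gamm_seq} with $\rho_j\geq 1-\overline{\rho}$ and $\tau_j\geq\tau$ gives
\begin{equation*}
\Gamma_k=\sum_{j=1}^k\rho_j\gamma_j\geq k(1-\overline{\rho})\frac{1-2\sigma}{2}\tau,
\end{equation*}
and plugging this, together with $\tau_j\geq\tau$ and $2-\rho_j\geq 1-\overline{\rho}$, into the definition of $\vartheta_k$ shows that $\vartheta_k$ is $\mathcal{O}(1/(\tau^2 k))$, so it is uniformly bounded. Feeding these estimates into the ergodic bounds \eqref{bounds erg seq} yields $\norm{\overline{a}_k+\overline{b}_k},\,\norm{\overline{x}_k-\overline{y}_k}=\mathcal{O}(d_0/(\tau k))$ and $\overline{\epsilon}^x_k+\overline{\epsilon}^y_k=\mathcal{O}(d_0^2/(\tau k))$. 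Requiring the first to be at most $\delta$ and the second at most $\epsilon$ gives $k_0=\mathcal{O}(\max\{d_0/(\tau\delta),\,d_0^2/(\tau\epsilon)\})$, as stated.

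Neither part presents any real obstacle; the only point requiring a touch of care is that the stopping criterion \eqref{stop_criterion} demands control of $\max\{\norm{a+b},\norm{x-y}\}$ separately, which is handled by the trivial observation $\max\{\norm{a+b},\norm{x-y}\}^2\leq \norm{a+b}^2+\norm{x-y}^2$, and that one must verify $\vartheta_k+4=\mathcal{O}(1)$ so that the asymptotic rate in the third bound of \eqref{bounds erg seq} is genuinely $\mathcal{O}(1/\Gamma_k)$ rather than worse. As noted in the statement, the structure is identical to that of Theorem \ref{Teo approx solution}, so the proof is omitted.
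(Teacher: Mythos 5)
Your proposal is correct and is exactly the argument the paper intends: the paper omits this proof, stating only that it proceeds "in much the same way as Theorem \ref{Teo approx solution}," and your write-up carries out precisely that analogous argument (uniform lower bound $\tau_k\geq\tau$, then Theorems \ref{Teo it compl seq} and \ref{Teo_compl_erg_seq} together with the lower bound on $\Gamma_k$ from \eqref{est_gamm_seq} and the resulting bound on $\vartheta_k$). The only caveat, inherited from the paper's own Theorem \ref{Teo approx solution} rather than introduced by you, is that the bound $\vartheta_k=\mathcal{O}(1/(\tau^2k))$ still carries a $\tau$-dependence inside the implied constant of the $\epsilon$-bound, which the stated $\mathcal{O}$ notation glosses over.
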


\section{Conclusions}
We introduced a general projective splitting scheme for solving monotone inclusion problems given by the sum of two maximal monotone operators, which generalizes the family of projective splitting methods (PSM) proposed by Eckstein and Svaiter. Using this general framework we analyzed the iteration-complexity of the family of PSM and, as a consequence, we obtained the iteration-complexity of the two-operator case of the Spingarn partial inverse method. We introduced two inexact variants of two special cases of the family of PSM, which allow the resolvent mappings to be solved inexactly. We also proved the iteration-complexity for the above-mentioned methods.

\vspace{2mm}
\noindent {\bf Acknowledgments} \,\, {\small This work is part of the author's Ph.D. thesis, written under the supervision of Benar Fux Svaiter at IMPA, and supported by CAPES and FAPERJ.}

\bibliographystyle{spmpsci_unsrt}
\bibliography{spingarn-complexity.bbl}

\end{document}